\documentclass[12pt, reqno]{amsart}
\usepackage{}
\usepackage{cancel}
\usepackage{stmaryrd}
\usepackage{amsmath}
\usepackage{amsfonts}
\usepackage{amssymb}
\usepackage[all,cmtip]{xy}           
\usepackage{xspace}
\usepackage{bbding}
\usepackage{makecell}
\usepackage{txfonts}
\usepackage{enumerate}
\usepackage[shortlabels]{enumitem}
\usepackage{cite}

\usepackage{tikz}
\usepackage{titletoc}

\usepackage{ifpdf}
\ifpdf
 \usepackage[colorlinks,final,backref=page,hyperindex]{hyperref}
\else
 \usepackage[colorlinks,final,backref=page,hyperindex,hypertex]{hyperref}
\fi
\usepackage[active]{srcltx}

\makeatletter

\begin{document}
\newcommand {\emptycomment}[1]{} 

\baselineskip=15pt
\newcommand{\nc}{\newcommand}
\newcommand{\delete}[1]{}
\nc{\mfootnote}[1]{\footnote{#1}} 
\nc{\todo}[1]{\tred{To do:} #1}

\nc{\mlabel}[1]{\label{#1}}  
\nc{\mcite}[1]{\cite{#1}}  
\nc{\mref}[1]{\ref{#1}}  
\nc{\meqref}[1]{\eqref{#1}} 
\nc{\mbibitem}[1]{\bibitem{#1}} 

\delete{
\nc{\mlabel}[1]{\label{#1}  
{\hfill \hspace{1cm}{\bf{{\ }\hfill(#1)}}}}
\nc{\mcite}[1]{\cite{#1}{{\bf{{\ }(#1)}}}}  
\nc{\mref}[1]{\ref{#1}{{\bf{{\ }(#1)}}}}  
\nc{\meqref}[1]{\eqref{#1}{{\bf{{\ }(#1)}}}} 
\nc{\mbibitem}[1]{\bibitem[\bf #1]{#1}} 
}

\newcommand {\comment}[1]{{\marginpar{*}\scriptsize\textbf{Comments:} #1}}
\nc{\mrm}[1]{{\rm #1}}
\nc{\id}{\mrm{id}}  \nc{\Id}{\mrm{Id}}
\nc{\admset}{\{\pm x\}\cup (-x+K^{\times}) \cup K^{\times} x^{-1}}

\def\a{\alpha}
\def\ad{associative D-}
\def\padm{$P$-admissible~}
\def\asi{ASI~}
\def\aybe{aYBe~}
\def\b{\beta}
\def\bd{\boxdot}
\def\bbf{\overline{f}}
\def\bF{\overline{F}}
\def\bbF{\overline{\overline{F}}}
\def\bbbf{\overline{\overline{f}}}
\def\bg{\overline{g}}
\def\bG{\overline{G}}
\def\bbG{\overline{\overline{G}}}
\def\bbg{\overline{\overline{g}}}
\def\bT{\overline{T}}
\def\bt{\overline{t}}
\def\bbT{\overline{\overline{T}}}
\def\bbt{\overline{\overline{t}}}
\def\bR{\overline{R}}
\def\br{\overline{r}}
\def\bbR{\overline{\overline{R}}}
\def\bbr{\overline{\overline{r}}}
\def\bu{\overline{u}}
\def\bU{\overline{U}}
\def\bbU{\overline{\overline{U}}}
\def\bbu{\overline{\overline{u}}}
\def\bw{\overline{w}}
\def\bW{\overline{W}}
\def\bbW{\overline{\overline{W}}}
\def\bbw{\overline{\overline{w}}}
\def\btl{\blacktriangleright}
\def\btr{\blacktriangleleft}
\def\calo{\mathcal{O}}
\def\ci{\circ}
\def\d{\delta}
\def\dd{\diamondsuit}
\def\D{\Delta}
\def\frakB{\mathfrak{B}}
\def\G{\Gamma}
\def\g{\gamma}
\def\gg{\mathfrak{g}}
\def\hh{\mathfrak{h}}
\def\k{\kappa}
\def\l{\lambda}
\def\ll{\mathfrak{L}}
\def\lh{\leftharpoonup}
\def\lr{\longrightarrow}
\def\N{Nijenhuis~}
\def\o{\otimes}
\def\om{\omega}
\def\opa{\cdot_{A}}
\def\opb{\cdot_{B}}
\def\p{\psi}
\def\r{\rho}
\def\ra{\rightarrow}
\def\rbs{Rota-Baxter system}
\def\rep{representation~}
\def\rh{\rightharpoonup}
\def\rr{\mathfrak{R}}
\def\s{\sigma}
\def\srbs{symmetric Rota-Baxter system}
\def\st{\star}
\def\ss{\mathfrak{sl}_2}
\def\ti{\times}
\def\tl{\triangleright}
\def\tr{\triangleleft}
\def\v{\varepsilon}
\def\vp{\varphi}
\def\vth{\vartheta}
\def\wn{\widetilde{N}}
\def\wb{\widetilde{\beta}}

\newtheorem{thm}{Theorem}[section]
\newtheorem{lem}[thm]{Lemma}
\newtheorem{cor}[thm]{Corollary}
\newtheorem{pro}[thm]{Proposition}
\theoremstyle{definition}
\newtheorem{defi}[thm]{Definition}
\newtheorem{ex}[thm]{Example}
\newtheorem{rmk}[thm]{Remark}
\newtheorem{tdef}[thm]{Theorem-Definition}
\newtheorem{condition}[thm]{Condition}
\newtheorem{question}[thm]{Question}
\renewcommand{\labelenumi}{{\rm(\alph{enumi})}}
\renewcommand{\theenumi}{\alph{enumi}}
\newcommand{\End}{\mathrm{End}}

\nc{\ts}[1]{\textcolor{purple}{MTS:#1}}
\nc{\zc}[2]{\textcolor{blue}{ZC:#2}}
\font\cyr=wncyr10


\title{Reynolds Leibniz bialgebras of any weight}

 \author[Ma]{Tianshui Ma \textsuperscript{*}}
 \address{School of Mathematics and Statistics, Henan Normal University, Xinxiang 453007, China}
         \email{matianshui@htu.edu.cn}

 \author[Ming]{Yuguang Ming}
 \address{School of Mathematics and Statistics, Henan Normal University, Xinxiang 453007, China}
         \email{mingyuguang@stu.htu.edu.cn}

 \author[Zhao]{Chan Zhao}
 \address{School of Mathematics and Statistics, Henan Normal University, Xinxiang 453007, China}
         \email{zhaochan2024@stu.htu.edu.cn}

\date{\today}

 \begin{abstract} This paper studies bialgebraic structures associated with a Reynolds Leibniz algebra of weight $\lambda$, that is, a Leibniz algebra equipped with a Reynolds operator of weight $\lambda$. We first present equivalent characterizations of Reynolds Leibniz bialgebras of weight $\lambda$, using matched pairs and Manin triples. Next, we examine compatibility conditions between solutions of the classical Leibniz Yang-Baxter equation and Reynolds operators of weight $\lambda$, framed in terms of triangular Reynolds Leibniz bialgebras. Finally, building on results of Ayupov {\em et al.}, we classify two-dimensional triangular Reynolds Leibniz bialgebras of weight $\lambda$.
 \end{abstract}

\subjclass[2020]{
17B38,  
17A30,  
16T25,   
16T10.   
}

\keywords{Weighted Reynolds operators; Leibniz bialgebras; classical Leibniz Yang-Baxter equation}

 \maketitle

 \vspace{-0.92cm}

  \tableofcontents

 \vspace{-1cm}

 \allowdisplaybreaks

\section{Introduction and preliminaries} This paper aims to develop a bialgebra theory for Reynolds Leibniz algebras of weight $\lambda$. We construct Reynolds Leibniz bialgebras explicitly by employing $\mathcal{O}$-operators and symmetric solutions of the classical Leibniz Yang-Baxter equation within this framework. In doing so, we establish compatibility conditions between the solutions of the classical Leibniz Yang-Baxter equation and Reynolds operators on Leibniz (co)algebras, which serves as the foundational step for our construction.

\subsection{Reynolds Leibniz algebras}
 The Reynolds operator originates from fluid dynamics, where it was introduced by O. Reynolds \cite{Rey} in the late 19th century to describe the averaging of turbulent flows-a technique now fundamental in the Reynolds-averaged Navier-Stokes equations. In recent years, the Reynolds operator has garnered considerable interest across various algebraic contexts, as evidenced by a growing body of literature, see \cite{Das, GGL, GD, LW24, WK, ZGG}, etc. Notably, in 2024, Guo and Das \cite{GD} studied the cohomology and deformation theory of a class of generalized Reynolds operators on Leibniz algebras. In the same year, Li and Wang \cite{LW24} introduced the notion of a weighted Reynolds Leibniz algebra and investigated the cohomology of such algebras with coefficients in an appropriate representation. Concretely, a {\bf Reynolds Leibniz algebra of weight $\l$} is a pair $((\gg, [,]), \rr)$ consisting of a Leibniz algebra $(\gg, [,])$, that is to say, a vector space $\gg$ and bilinear operation $[,]: \gg\o \gg\lr \gg$ satisfying, for all $x, y, z \in \gg$,
 \begin{eqnarray}
 &[x, [y, z]]=[[x, y], z]+[y, [x, z]], &\label{eq:1}
 \end{eqnarray}
 and a linear map $\rr: \gg\to \gg$ satisfying, for all $x, y\in \gg$ and a scalar $\l\in K$, the equation below holds:
 \begin{eqnarray}
 &[\rr(x),\rr(y)]+\l \rr([\rr(x),\rr(y)])=\rr([x,\rr(y)])+\rr([\rr(x),y]).&\label{eq:2}
 \end{eqnarray}
 Also in 2024, Guo {\em et al.} \cite{GGL} proposed a further generalization of Reynolds algebras by replacing the scalar $\lambda$ in the above identity with an element of the underlying algebra itself, leading to a more flexible algebraic framework.

\subsection{Leibniz (bi)algebras}
 Leibniz algebras were originally introduced by Bloh \cite{B} under the name ``D-algebras" and were later systematically developed by Loday \cite{L, LP}. They serve as a natural non-commutative analogue of Lie algebras. The study of Leibniz algebras has attracted considerable attention in recent years, as evidenced by a growing body of work including \cite{DS,LMW,LW25,LW24,LP,MSZ,MS25,MS24,MS22,TS,WK}, among others.

 In the context of Leibniz bialgebras, Rezaei-Aghdam {\em et al.} \cite{RSH} first introduced the concept by directly adapting the definition of a Lie bialgebra. Subsequently, Barreiro and Benayadi \cite{BB} proposed a novel approach, grounded in a fundamental result that identifies the underlying vector space of a symmetric Leibniz algebra as a Lie algebra (or a commutative associative algebra) via a naturally induced bracket (or product). Tang and Sheng \cite{TS} later investigated Leibniz bialgebras through the twisting theory of twilled Leibniz algebras and introduced the classical Leibniz Yang-Baxter equation (cLYBe). Following this, Li {\em et al.} \cite{LMW} recovered the cLYBe using Bai's method in \cite{Bai1} and demonstrated that a class of Leibniz bialgebras can give rise to Nijenhuis operators. More recently in 2025, Bai {\em et al.} \cite{BLST} explored factorizable quasitriangular Leibniz bialgebras, while Xu {\em et al.} \cite{XBH} examined Leibniz conformal bialgebras and the related classical Leibniz conformal Yang-Baxter equation. In parallel, Ma {\em et al.} \cite{MSZ1} studied Hom-deformations of Leibniz bialgebras.

\subsection{Main results and layout of the paper} This paper forges a theoretical link between two fundamental aspects above of Leibniz algebras: their bialgebraic structure and the theory of Reynolds operators. The key notions and constructions introduced in this work are summarized in the following diagram.
$$ \xymatrix{
&
 \mathcal{O}\text{-operators on Reynolds}\atop\text{ Leibniz algebras of weight $\l$} \ar@2{<->}^{Thm~\ref{thm:126}}[d]
 &\text{matched pairs of Reynolds }\atop \text{Leibniz algebras of weight $\l$}
 &
 \\
&
\text{solutions of}\atop \text{$S$-admissible cLYBe}
\ar@2{->}^{Thm~\ref{thm:150}}[r]
& \text{Reynolds Leibniz }\atop \text{bialgebras of weight $\l$} \ar@2{->}^{Thm~\ref{thm:de:87}}[r] 
\ar@2{<->}_{Thm ~\ref{thm:91}}[u]
\ar@2{<->}^{Thm~\ref{thm:92}}[d]
&\text{Rota-Baxter Leibniz}\atop \text{bialgebra of weight 0} 
& \\
&
&\text{double construction of Frobenius}\atop \text{Reynolds Leibniz algebras of weight $\l$}
&
}
$$

 The paper is organized as follows. In Section \ref{se:rep}, we introduce the notions of a representation and a dual representation for a Reynolds Leibniz algebra. For completeness, we also discuss corepresentations of a Reynolds Leibniz coalgebra. Section \ref{se:3} presents the concept of a Reynolds Leibniz bialgebra and provides its equivalent characterizations, first via a matched pair and then through a Manin triple of Reynolds Leibniz algebras. Section \ref{se:4} focuses on a special class of these bialgebras, from which we derive compatibility conditions between classical Leibniz Yang-Baxter equations and Reynolds operators. The final section is devoted to the classification of two-dimensional triangular Reynolds Leibniz bialgebras.

\smallskip\smallskip
 \noindent{\bf Notations:} Throughout this paper, we fix a field $K$. All vector spaces, tensor products, and linear homomorphisms are over $K$. $\tau: M\o N\to N\o M$ is the flip map. In this paper, Reynolds Leibniz algebra means Reynolds Leibniz algebra of weight $\l$.

 \section{The (co)representation of Reynolds Leibniz (co)algebra} \label{se:rep}

 In this section, we introduce the notion of a (co)representation of a Reynolds Leibniz (co)algebra and discuss some related properties.

 \subsection{Representation of Reynolds Leibniz algebras}

 \begin{defi}\label{defi:315}\cite{LW24} Let $((\gg, [,]_\gg), \rr)$ and $((\gg^{'}, [,]_{\gg^{'}}), \rr^{'})$ be Reynolds Leibniz algebras. A {\bf homomorphism from $((\gg, [,]_\gg), \rr)$ to $((\gg^{'}, [,]_{\gg^{'}}), \rr^{'})$} consists of a Leibniz algebra homomorphism $\phi:\gg\rightarrow\gg^{'}$ such that
 \begin{eqnarray*}
 \phi\circ\rr=\rr^{'}\circ\phi.
 \end{eqnarray*}
 Furthermore, if $\phi$ is invertible, then $\phi$ is called an {\bf isomorphism}.
 \end{defi}

 \begin{pro}\label{pro:310} Let $((\gg,[,]),\rr)$ be a Reynolds Leibniz algebra. Then $\rr $ induce a new Leibniz algebra structure on $\gg$ given by
 \begin{eqnarray}
 &[x,y]_{\rr}:=[x,\rr(y)]+[\rr(x),y]-\lambda[\rr(x),\rr(y)], \quad \forall x,y\in\gg. &\label{eq:311}
 \end{eqnarray}
 The Leibniz algebra $(\gg,[,]_{\rr})$ is called the \textbf{induced Leibniz algebra}. Moreover, $((\gg,[,]_{\rr}),\rr)$ is also a Reynolds Leibniz algebra and $\rr$ is a homomorphism from $((\gg,[,]_{\rr}),\rr)$ to $((\gg,[,]),\rr)$.

 \begin{proof} For all $x, y, z\in\gg$, we calculate
 \begin{flalign*}
 &\hspace{-4mm}[x, [y, z]_{\rr}]_{\rr}-[[x, y]_{\rr}, z]_{\rr}-[y, [x, z]_{\rr}]_{\rr}\\ 
 &=[\rr(x),[y,\rr(z)]]-[[\rr(x),y],\rr(z)]-[y,[\rr(x),\rr(z)]]+[\rr(x),[\rr(y),z]]\\
 &\hspace{4mm}-[[\rr(x),\rr(y)],z]-[\rr(y),[\rr(x),z]]+[x,[\rr(y),\rr(z)]]-[[x,\rr (y)],\rr(z)]\\
 &\hspace{4mm}-[\rr(y),[x,\rr(z)]]-2\lambda\big([\rr(x),[\rr(y),\rr(z)]]-[[\rr(x),\rr (y)],\rr(z)]\\
 &\hspace{4mm}-[\rr(y),[\rr(x),\rr(z)]]\big)\\
 &\stackrel{(\ref{eq:1})}{=}0.
 \end{flalign*}
 Therefore $(\gg,[,]_{\rr})$ is a Leibniz algebra. Furthermore,
 \begin{flalign*}
 &[\rr(x),\rr(y)]_\rr+\lambda \rr([\rr(x),\rr(y)]_\rr)-\rr([x,\rr(y)]_\rr)-\rr([\rr (x),y]_\rr)\\ 
 &=[\rr(x),\rr(\rr(y))]+\lambda\rr([\rr(x),\rr(\rr(y))]-\rr([x,\rr(\rr(y))]-\rr([\rr (x),\rr(y)]+[\rr(\rr(x)),\rr(y)]\\
 &\hspace{4mm}+\lambda\rr([\rr(\rr(x)),\rr(y)]-\rr([\rr(x),\rr(y)])-\rr([\rr(\rr (x)),y])-\lambda\Big([\rr(\rr(x)),\rr(\rr(y))]\\
 &\hspace{4mm}+\lambda\rr([\rr(\rr(x)),\rr(\rr(y))]-\rr([\rr(x),\rr(\rr(y))]- \rr([\rr (\rr(x)),\rr(y)]\Big)\\
 &\stackrel{(\ref{eq:2})}{=}0.
 \end{flalign*}
 Then $((\gg,[,]_{\rr}),\rr)$ is a Reynolds Leibniz algebra and $\rr$ is homomorphism from $((\gg,[,]_{\rr}),\rr)$ to $((\gg,[,]),\rr)$ by $[\rr(x),\rr(y)]=\rr([x,y]_\rr)$.
 \end{proof}
\end{pro}

 \begin{defi}\label{defi:repLa}\cite{LW24,TS} A \textbf{representation of Leibniz algebra} $(\gg,[,])$ is a triple $(V, \rho{^L}, \rho{^R})$, where $V$ is a vector space , $\rho{^L}, \rho{^R}:\gg\lr \End(V)$ are linear maps, for all $x,y\in \gg$,
 \begin{eqnarray*}
 &\rho{^L}([x, y])=\rho{^L}(x)\circ\rho{^L}(y)-\rho{^L}(y)\circ\rho{^L}(x),&\\ 
 &\rho{^R}([x, y])=\rho{^L}(x)\circ\rho{^R}(y)-\rho{^R}(y)\circ\rho{^L}(x),& \\  
 &-\rho{^R}(y)\circ\rho{^L}(x)=\rho{^R}(y)\circ\rho{^R}(x).&  
 \end{eqnarray*}
 \hspace{4mm}Two representations $(V, \rho{^L}, \rho{^R})$ and $(V^\prime, \rho^{L^{\prime}}, \rho^{R^{\prime}})$ of a Leibniz algebra $(\gg,[,])$ are called equivalent if there is a linear isomorphism $f:V\longrightarrow V^\prime$ satisfying, for all $x\in \gg, v\in V$,
 \begin{eqnarray*}
 &f\Big(\rho^L(x)v\Big)=\rho^{L^{\prime}}(x)f(v), ~~~ f\Big(\rho^R(x)v\Big)=\rho^{R^{\prime}}(x)f(v).&
 \end{eqnarray*}
 \end{defi}

 \begin{rmk}\label{rmk:7}
 Let $(\gg,[,])$ be a Leibniz algebra, then $(\gg, L, R)$ is a representation (called the {\bf adjoint representation}) of $(\gg,[,])$, where $L_{x}(y)=[x,y]=R_{y}(x)$ are two linear maps, for all $x,y\in \gg$.
 \end{rmk}

 \begin{defi}\label{defi:8}\cite{LW24} Let $(\gg, \rr)$ be a Reynolds Leibniz algebra, $(V, \rho{^L}, \rho{^R})$ be a representation of Leibniz algebra $(\gg,[,])$, $\alpha:V\longrightarrow V$ be a linear map. If for all $x\in \gg$ and $v\in V$,
 \begin{eqnarray}
 &\rho^L(\rr(x))\alpha(v)+\lambda \a(\rho^L(\rr(x))\a(v))=\a(\rho^L(\rr (x))v)+\a(\rho^L(x)\a(v)),&\label{eq:3}\\
 &\rho^R(\rr(x))\alpha(v)+\lambda\a(\rho^R(\rr(x))\a(v))=\a(\rho^R(\rr (x))v)+\a(\rho^R(x)\a(v)).&\label{eq:4}
 \end{eqnarray}
 Then the quadruple $(V, \rho{^L}, \rho{^R}, \a)$ is a \textbf{representation of $(\gg, \rr)$}.

 Two representations $(V, \rho{^L}, \rho{^R}, \alpha)$ and $(V^\prime, \rho^{L^{\prime}}, \rho^{R^{\prime}}, {\alpha}^\prime)$ of $(\gg, \rr)$ are called {\bf equivalent} if $(V, \rho{^L}, \rho{^R})$ and $(V^\prime, \rho^{L^{\prime}}, \rho^{R^{\prime}})$ are equivalent for representations of $(\gg, [,])$, further, $f\circ \alpha={\alpha}^\prime\circ f$.
 \end{defi}

 \begin{rmk}\label{rmk:9}  Let $(\gg, \rr)$ be a Reynolds Leibniz algebra, $(V, \rho{^L}, \rho{^R})$ be a representation of Leibniz algebra $(\gg,[,])$. Then
 \begin{enumerate}[(1)]
   \item If $\l=0$, then a Reynolds Leibniz algebra of weight $\l$ is a Rota-Baxter Leibniz algebra of weight 0. In this case Definition \ref{defi:8} turns to be a representation of a Rota-Baxter Leibniz algebra of weight 0.
   \item \label{it:10} $(\gg, L, R, \rr)$ is a representation of $(\gg, \rr)$, called \textbf{the adjoint representation} of $(\gg, \rr)$.
 \end{enumerate}
 \end{rmk}

 Let $\rho{^L}, \rho{^R}: \gg\lr \End(V)$ be two linear maps. Define the multiplication on $\gg\ltimes V$ by:
 \begin{eqnarray*}
 &[x+u, y+v]_{\ltimes}=[x, y]+\rho^L(x)v+\rho^R(y)u, \quad \forall x,y\in \gg, u,v\in V.&\label{eq:301}
 \end{eqnarray*}
 Then $\gg\oplus V$ is a Leibniz algebra (denoted by $\gg\ltimes_{\rho{^L},\; \rho{^R}} V$, and called the \textbf{semi-direct product} of $(\gg,[,])$ via $(V, \rho{^L}, \rho{^R})$) if and only if $(V, \rho{^L}, \rho{^R})$ is a representation of Leibniz algebra $(\gg,[,])$ \cite{TS}.

 Similarly, the representation of the Reynolds Leibniz algebra can be characterized by the semi-direct product Reynolds Leibniz algebra, the proof is omitted here.

 \begin{pro}\label{pro:12} Let $(\gg, \rr)$ be a Reynolds Leibniz algebra, $(V, \rho{^L}, \rho{^R})$ be a representation of $(\gg,[,])$, $\alpha:V\longrightarrow V$ be a linear map. For all $x\in \gg$, $u\in V$, define a linear map:
 \begin{eqnarray}\label{eq:13}
 &\rr+\a: \gg\oplus V\lr \gg\oplus V, \quad  (\rr+\a)(x+u):=\rr(x)+\a(u).&\nonumber
 \end{eqnarray}
 Then $(\gg\oplus V, \rr+\a)$ is a Reynolds Leibniz algebra if and only if $(V, \rho{^L}, \rho{^R}, \a)$ is a representation of $(\gg, \rr)$. The resulting Reynolds Leibniz algebra is denoted by $(\gg\ltimes_{\rho{^L},\; \rho{^R}}V, \rr+\a)$ and is called the \textbf{semi-direct product} of $(\gg, \rr)$ by its representation $(V, \rho{^L}, \rho{^R}, \a)$.
 \end{pro}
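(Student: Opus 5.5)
Since $(V,\rho^L,\rho^R)$ is a representation of $(\gg,[,])$, the semi-direct product $\gg\ltimes_{\rho^L,\rho^R}V$ recalled from \cite{TS} is already a Leibniz algebra. So the statement comes down to one claim: $\rr+\a$ satisfies the weighted Reynolds identity \eqref{eq:2} on $\gg\oplus V$ if and only if \eqref{eq:3} and \eqref{eq:4} hold. The plan is to expand \eqref{eq:2} for $\rr+\a$ on general elements $x+u$, $y+v$ and sort the result into its components in $\gg$ and in $V$.

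\textbf{Step 1 (expand the bracket).} First compute $(\rr+\a)(x+u)=\rr(x)+\a(u)$. Then
\begin{eqnarray*}
&[\rr(x)+\a(u),\rr(y)+\a(v)]_{\ltimes}=[\rr(x),\rr(y)]+\rho^L(\rr(x))\a(v)+\rho^R(\rr(y))\a(u).&
\end{eqnarray*}
In the same way one computes $[x+u,\rr(y)+\a(v)]_\ltimes$ and $[\rr(x)+\a(u),y+v]_\ltimes$. Next apply $\rr+\a$, which acts as $\rr$ on the $\gg$-part and as $\a$ on the $V$-part.

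\textbf{Step 2 (separate the components).} Since $\rr+\a$ preserves the decomposition $\gg\oplus V$, the identity splits into three pieces.
\begin{itemize}
\item The $\gg$-component is exactly \eqref{eq:2} for $\rr$, which holds by hypothesis.
\item The $V$-component is linear in $(u,v)$, so it separates further into the terms involving $v$ and the terms involving $u$.
\end{itemize}
The $v$-terms give
\begin{eqnarray*}
&\rho^L(\rr(x))\a(v)+\l\a(\rho^L(\rr(x))\a(v))=\a(\rho^L(x)\a(v))+\a(\rho^L(\rr(x))v),&
\end{eqnarray*}
which is \eqref{eq:3}. The $u$-terms give the analogous identity with $\rho^R(\rr(y))$, which is \eqref{eq:4} with $y$ in place of $x$ and $u$ in place of $v$.

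\textbf{Step 3 (conclude both directions).} If \eqref{eq:3} and \eqref{eq:4} hold, all three components vanish, so $\rr+\a$ is a Reynolds operator of weight $\l$. Conversely, suppose $\rr+\a$ is a Reynolds operator. Specializing to $u=0$ recovers \eqref{eq:3}, and specializing to $v=0$ recovers \eqref{eq:4}.

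The only delicate point is bookkeeping in Step 1: one must check which of the two mixed brackets produces the $\rho(\rr(\cdot))v$ term and which produces the $\rho(\cdot)\a(v)$ term. In $[x+u,\rr(y)+\a(v)]_\ltimes$ the $v$-contribution is $\rho^L(x)\a(v)$, while in $[\rr(x)+\a(u),y+v]_\ltimes$ it is $\rho^L(\rr(x))v$. These match the right-hand side of \eqref{eq:3}, so no obstacle is expected.
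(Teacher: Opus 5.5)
Your proof is correct and is essentially the paper's intended argument: the paper omits this proof, but your componentwise expansion of \eqref{eq:2} for $\rr+\a$ (with the $\gg$-part, $v$-part and $u$-part giving \eqref{eq:2}, \eqref{eq:3} and \eqref{eq:4}) is exactly the comparison it carries out in the proof of Theorem~\ref{thm:62}. That theorem contains this statement as the special case of a $V$ with zero bracket and $\rho_2^L=\rho_2^R=0$, and your bookkeeping of the mixed terms is right.
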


 \subsection{Dual representation}\label{sub:Dualrep}

 \begin{lem}\label{lem:15}{\em \cite{TS}} Let $(\gg, [,])$ be a Leibniz algebra and $(V, \rho{^L}, \rho{^R})$ be a representation of $(\gg,[,])$. For all  $x\in \gg, v\in V$ and $u^*\in V^*$, define linear maps $\rho^{L*}, \rho^{R*}: \gg\lr \End(V^*)$ by
 \begin{eqnarray}
 &\big\langle\rho^{L*}(x)u^*, v\big\rangle=-\big\langle u^*, \rho^{L}(x)v\big\rangle,&\label{eq:16}\\
 &\big\langle\rho^{R*}(x)u^*, v\big\rangle=-\big\langle u^*, \rho^{R}(x)v\big\rangle.&\label{eq:17}
 \end{eqnarray}
 Then $(V^*, \rho^{L*}, -\rho^{L*}-\rho^{R*})$ is a representation of $(\gg,[,])$, which is called the \textbf{dual representation} of $(V, \rho{^L}, \rho{^R})$.
 \end{lem}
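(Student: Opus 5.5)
We verify directly the three identities of Definition \ref{defi:repLa} for the triple $(V^*, \rho^{L*}, \sigma)$, where $\sigma:=-\rho^{L*}-\rho^{R*}$. The method is to pair each identity, applied to $u^*\in V^*$, against an arbitrary $v\in V$, and to move every operator onto $v$ using (\ref{eq:16}) and (\ref{eq:17}). A composite of two starred operators transposes with the order reversed and no net sign change: $\langle \rho^{A*}(x)\rho^{B*}(y)u^*, v\rangle=\langle u^*, \rho^B(y)\rho^A(x)v\rangle$. Each identity on $V^*$ is then equivalent to an identity of operators on $V$, which we check from the three axioms for $(V,\rho^L,\rho^R)$:
\begin{eqnarray*}
&\rho^L([x,y])=[\rho^L(x),\rho^L(y)],\qquad \rho^R([x,y])=\rho^L(x)\rho^R(y)-\rho^R(y)\rho^L(x),&\\
&\rho^R(y)\rho^L(x)=-\rho^R(y)\rho^R(x).&
\end{eqnarray*}
We label these (A1), (A2), (A3).

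For the first identity, pairing gives $\langle \rho^{L*}([x,y])u^*,v\rangle=-\langle u^*,\rho^L([x,y])v\rangle$. On the other side, $\langle[\rho^{L*}(x),\rho^{L*}(y)]u^*,v\rangle=\langle u^*,(\rho^L(y)\rho^L(x)-\rho^L(x)\rho^L(y))v\rangle$. These agree by (A1).

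For the second identity we must show $\sigma([x,y])=\rho^{L*}(x)\sigma(y)-\sigma(y)\rho^{L*}(x)$. Transposing, this becomes the operator identity on $V$
$$\rho^L([x,y])+\rho^R([x,y])=\rho^L(x)\rho^L(y)+\rho^L(x)\rho^R(y)-\rho^L(y)\rho^L(x)-\rho^R(y)\rho^L(x).$$
It is the sum of (A1) and (A2).

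For the third identity we need $-\sigma(y)\rho^{L*}(x)=\sigma(y)\sigma(x)$, that is, $\sigma(y)\big(\rho^{L*}(x)+\sigma(x)\big)=0$. Since $\rho^{L*}(x)+\sigma(x)=-\rho^{R*}(x)$, this reads $(\rho^{L*}(y)+\rho^{R*}(y))\rho^{R*}(x)=0$. Transposing, it is equivalent to $\rho^R(x)\big(\rho^L(y)+\rho^R(y)\big)=0$ on $V$, which is (A3) with the roles of $x$ and $y$ exchanged.

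The only point needing care is the sign and order bookkeeping under transposition, together with noticing that in the third identity $\rho^{L*}+\sigma$ collapses to $-\rho^{R*}$. Once that is seen, each step reduces to a single axiom or a sum of two axioms, so no real obstacle remains.
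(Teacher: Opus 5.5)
Your proof is correct. Under the transposition rule $\langle \rho^{A*}(x)\rho^{B*}(y)u^*,v\rangle=\langle u^*,\rho^B(y)\rho^A(x)v\rangle$, the three identities for $(V^*,\rho^{L*},-\rho^{L*}-\rho^{R*})$ reduce to (A1), (A1)+(A2), and (A3) with $x,y$ swapped, just as you say. The paper gives no proof of its own and only cites Tang--Sheng; your direct dualization is the standard computation behind that citation.
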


 \begin{lem}\label{lem:18}   Let $(\gg, \rr)$ be a Reynolds Leibniz algebra, $(V, \rho{^L}, \rho{^R})$ be a representation of Leibniz algebra $(\gg,[,])$, $\beta:V\longrightarrow V$ be a linear map. Then the quadruple $\big(V^*, \rho^{L*}, -\rho^{L*}-\rho^{R*}, \beta^*\big)$ is a representation of $(\gg, \rr)$ if and only if for all $x\in \gg, v\in V$, the following two equations hold:
 \begin{eqnarray}
 &\beta\Big(\rho^L\big(x)\beta(v)\Big)+\rho^L(\rr(x))\beta(v)= \beta\Big(\rho^L(\rr (x))v\Big)+\lambda\beta\Big(\rho^L(\rr(x))\beta(v)\Big),&\label{eq:19}\\
 &\beta\Big(\rho^R\big(x)\beta(v)\Big)+\rho^R(\rr(x))\beta(v)= \beta\Big(\rho^R(\rr (x))v\Big)+\lambda\beta\Big(\rho^R(\rr(x))\beta(v)\Big).&\label{eq:20}
 \end{eqnarray}
 \end{lem}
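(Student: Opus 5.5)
By Lemma \ref{lem:15}, $(V^*, \rho^{L*}, -\rho^{L*}-\rho^{R*})$ is already a representation of the Leibniz algebra $(\gg,[,])$. So, by Definition \ref{defi:8}, the quadruple $(V^*, \rho^{L*}, -\rho^{L*}-\rho^{R*}, \beta^*)$ is a representation of $(\gg,\rr)$ if and only if \eqref{eq:3} and \eqref{eq:4} hold with $\rho^L$ replaced by $\rho^{L*}$, $\rho^R$ replaced by $-\rho^{L*}-\rho^{R*}$, and $\a$ replaced by $\beta^*$. The plan is to pair each of these two identities in $V^*$ with an arbitrary $v\in V$ and transport everything to $V$ using \eqref{eq:16}, \eqref{eq:17} and $\langle \beta^*(u^*),v\rangle=\langle u^*,\beta(v)\rangle$.

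For the left condition, take $u^*\in V^*$ and $v\in V$, and evaluate each of the four terms of \eqref{eq:3} against $v$. For instance,
\[
\langle \rho^{L*}(\rr(x))\beta^*(u^*), v\rangle=-\langle u^*, \beta(\rho^L(\rr(x))v)\rangle .
\]
Similarly,
\[
\langle \beta^*(\rho^{L*}(\rr(x))\beta^*(u^*)),v\rangle=-\langle u^*,\beta(\rho^L(\rr(x))\beta(v))\rangle .
\]
The remaining terms give $-\langle u^*,\rho^L(\rr(x))\beta(v)\rangle$ and $-\langle u^*,\beta(\rho^L(x)\beta(v))\rangle$. Since $u^*$ and $v$ are arbitrary, the dualized \eqref{eq:3} is equivalent to
\[
\beta(\rho^L(\rr(x))v)+\lambda\beta(\rho^L(\rr(x))\beta(v))=\rho^L(\rr(x))\beta(v)+\beta(\rho^L(x)\beta(v)),
\]
which is exactly \eqref{eq:19} after rearranging. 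Because pairing is nondegenerate in the $u^*$ variable, this step gives both directions at once. (In infinite dimension one uses that $u^*$ ranges over all of $V^*$.)

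For the right condition, the same computation with $-\rho^{L*}-\rho^{R*}$ splits linearly into a $\rho^{L*}$-part and a $\rho^{R*}$-part. By the previous paragraph, the $\rho^{L*}$-part reduces to the expression in \eqref{eq:19}. Hence the dualized \eqref{eq:4} is equivalent to the sum of the \eqref{eq:19}-expression and the \eqref{eq:20}-expression vanishing. Given \eqref{eq:19}, this is equivalent to \eqref{eq:20}. So the pair (dual \eqref{eq:3}, dual \eqref{eq:4}) is equivalent to the pair (\eqref{eq:19}, \eqref{eq:20}), which proves the lemma.

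The only delicate point is bookkeeping. One must track the order of composition when moving $\beta^*$ across the pairing; each adjoint reverses the order of composition, which is why $\beta$ ends up outermost. One must also track the minus signs coming from \eqref{eq:16} and \eqref{eq:17}. Finally, in the right-hand condition the linear combination $-\rho^{L*}-\rho^{R*}$ has to be decoupled using the already established left identity, rather than expecting a direct term-by-term match.
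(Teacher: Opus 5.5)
Your proposal is correct and follows essentially the same route as the paper: you invoke Lemma~\ref{lem:15} for the Leibniz-representation part, then pair the dualized identities \eqref{eq:3}--\eqref{eq:4} against an arbitrary $v\in V$ using \eqref{eq:16}--\eqref{eq:17}. The one difference is that you spell out the decoupling step. You write the $-\rho^{L*}-\rho^{R*}$ condition as the vanishing of the sum of the \eqref{eq:19}- and \eqref{eq:20}-expressions, and then use \eqref{eq:19} to isolate \eqref{eq:20}; the paper leaves this step implicit after displaying the combined pairing.
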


 \begin{proof} By Lemma \ref{lem:15}, $\big(V^*, \rho^{L*}, -\rho^{L*}-\rho^{R*}\big)$ is a representation of $(\gg, [,])$, so we need to check that $\big(V^*, \rho^{L*}, -\rho^{L*}-\rho^{R*}, \beta^*\big)$ meets the Eqs.(\ref{eq:3}) and (\ref{eq:4}) if and only if Eqs.(\ref{eq:19}) and (\ref{eq:20}) hold. According to the Eqs.(\ref{eq:16}) and (\ref{eq:17}), for all $x\in \gg, v\in V, u^*\in V^*$,
 \begin{eqnarray*}
 &&\hspace{-14mm}\Big\langle\rho^{L*}\big(\rr (x)\big)\beta^*(u^*)-{\beta^*}\big(\rho^{L*}(x)\beta^*(u^*)\big)
 -\beta^*\Big(\rho^{L*}\big(\rr(x)\big)u^*\Big)+\lambda\beta^*\big(\rho^{L*}(\rr (x))\beta^*(u^*)\big), v\Big\rangle\\
 &=&\Big\langle u^*, \beta(\rho^L\big(x)\beta(v))+\rho^L\big(\rr (x))\beta(v)-\beta\Big(\rho^L\big(\rr(x)\big)v\Big))
 -\lambda\beta(\rho^L(\rr(x))\beta(v))\Big\rangle.\\
 &&\hspace{-14mm}\Big\langle\big(-\rho^{L*}-\rho^{R*}\big)\big(\rr(x)\big)\beta^*(u^*)
 -{\beta^*}\Big(\big(-\rho^{L*}-\rho^{R*}\big)(x)\beta^*(u^*)\Big)\\
 &&-\beta^*\Big(\big(-\rho^{L*}-\rho^{R*}\big)\big(\rr (x)\big)u^*\Big)+\lambda\beta^*\Big(\big(-\rho^{L*}-\rho^{R*}\big)(\rr (x))\beta^*(u^*)\Big), v\Big\rangle\\
 &=&\Big\langle u^*, \beta\Big(\rho^L\big(\rr(x)\big)v\Big)+\beta\Big(\rho^R(\rr (x))v\Big)-\beta\Big(\rho^L(x)\beta(v)\Big)
 -\beta\big(\rho^R(x))\beta(v)-\rho^L(\rr(x))\beta(v)\\
 &&-\rho^R(\rr(x))\beta(v)+\lambda\beta(\rho^L\big(\rr (x)\big)\beta(v)+\lambda\beta\big(\rho^R(x))\beta(v)\Big\rangle,
 \end{eqnarray*}
 finishing the proof.
 \end{proof}

 Let $(V, \rho{^L}, \rho{^R})=(\gg, L, R)$, then we have

 \begin{cor}\label{cor:23}  Let $(\gg, \rr)$ be a Reynolds Leibniz algebra, $S : \gg\longrightarrow \gg$  a linear map. Then the quadruple $\big(\gg^*, L^{*}, -L^{*}-R^{*}, S^*\big)$ is a representation of $(\gg, \rr)$ if and only if for all $x,y \in \gg$, the following two equations hold:
 \begin{eqnarray}
 &S([x, S(y)])+[\rr(x), S(y)]=S([\rr(x), y])+\lambda S([\rr(x), S(y)]),&\label{eq:24}\\
 &S([S(x), y])+[S(x), \rr(y)]=S([x, \rr(y)])+\lambda S([S(x), \rr(y)]).&\label{eq:25}
 \end{eqnarray}
 \end{cor}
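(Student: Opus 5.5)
The plan is to obtain the statement as the special case of Lemma \ref{lem:18} in which the representation is the adjoint one. By Remark \ref{rmk:7}, $(\gg, L, R)$ is a representation of the Leibniz algebra $(\gg,[,])$. Lemma \ref{lem:15} then says that its dual $\big(\gg^*, L^{*}, -L^{*}-R^{*}\big)$ is a representation of $(\gg,[,])$. So Lemma \ref{lem:18} applies with $(V,\rho^L,\rho^R)=(\gg,L,R)$ and $\beta=S$. It gives that $\big(\gg^*, L^{*}, -L^{*}-R^{*}, S^*\big)$ is a representation of $(\gg,\rr)$ if and only if Eqs.~(\ref{eq:19}) and (\ref{eq:20}) hold for these data. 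What remains is to translate those two identities into Eqs.~(\ref{eq:24}) and (\ref{eq:25}).

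For Eq.~(\ref{eq:19}), I substitute $\rho^L(x)=L_x$ and write $v=y$. The terms become
\[
\beta\big(\rho^L(x)\beta(v)\big)=S([x,S(y)]),\qquad \rho^L(\rr(x))\beta(v)=[\rr(x),S(y)],
\]
\[
\beta\big(\rho^L(\rr(x))v\big)=S([\rr(x),y]),\qquad \lambda\beta\big(\rho^L(\rr(x))\beta(v)\big)=\lambda S([\rr(x),S(y)]).
\]
Read together, this is exactly Eq.~(\ref{eq:24}).

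For Eq.~(\ref{eq:20}), I substitute $\rho^R(x)=R_x$, so that $R_x(w)=[w,x]$. With $v\in\gg$ the terms become
\[
\beta\big(\rho^R(x)\beta(v)\big)=S([S(v),x]),\qquad \rho^R(\rr(x))\beta(v)=[S(v),\rr(x)],
\]
\[
\beta\big(\rho^R(\rr(x))v\big)=S([v,\rr(x)]),\qquad \lambda\beta\big(\rho^R(\rr(x))\beta(v)\big)=\lambda S([S(v),\rr(x)]).
\]
The identity therefore reads $S([S(v),x])+[S(v),\rr(x)]=S([v,\rr(x)])+\lambda S([S(v),\rr(x)])$. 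Renaming $v\mapsto x$ and $x\mapsto y$ gives Eq.~(\ref{eq:25}).

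Since both identities are required for all elements of $\gg$, the equivalence in Lemma \ref{lem:18} transfers directly to the equivalence claimed in the corollary. There is no real obstacle here. The only point needing care is the order of arguments when $R$ is unwound, since $R_y(x)=[x,y]$, together with the variable renaming that turns Eq.~(\ref{eq:20}) into the stated form of Eq.~(\ref{eq:25}).
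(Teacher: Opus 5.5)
Your proposal is correct and follows the paper's route exactly: the paper obtains this corollary by specializing Lemma~\ref{lem:18} to the adjoint representation $(V,\rho^L,\rho^R)=(\gg,L,R)$ with $\beta=S$. Your unwinding of Eq.~(\ref{eq:20}) via $R_x(w)=[w,x]$, followed by the renaming of variables that turns it into Eq.~(\ref{eq:25}), is exactly the bookkeeping the paper leaves implicit.
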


 \begin{defi}\label{de:cl} With notations in Lemma \ref{lem:18} and Corollary \ref{cor:23}, if Eqs.\eqref{eq:19}-\eqref{eq:20} hold, then we say that $\beta$ is \textbf{admissible to $(\gg, \rr)$ with respect to $(V, \rho{^L}, \rho{^R})$}; if Eqs.\eqref{eq:24}-\eqref{eq:25} hold, then we say that $S$ is \textbf{adjoint admissible to $(\gg, \rr)$} or \textbf{$(\gg, \rr)$ is $S$-adjoint admissible}.
 \end{defi}

 \begin{ex}\label{ex:pro:26}  Let $(\gg, \rr)$ be a Reynolds Leibniz algebra. Then
 \begin{enumerate}[(1)]
 \item $\rr^*$  is admissible to $(\gg, \rr)$ with respect to $(\gg^*, L^*, -L^*-R^*)$.
 \item $-\rr$ is adjoint admissible to $(\gg, \rr)$.
 \end{enumerate}
 \end{ex}

 \section{Reynolds Leibniz bialgebras}\label{se:3}

 In this section, we establish a bialgebra theory for Reynolds Leibniz algebras. 

 \subsection{Matched pairs of Reynolds Leibniz algebras} 

 \begin{lem}\label{lem:56} {\em \cite{MSZ}} Let $(\gg_{1},[,]_{\gg_{1}})$ and $(\gg_{2},[,]_{\gg_{2}})$ be two Leibniz algebras, $\rho_{1}^{L}, \rho_{1}^{R}: \gg_1\lr \End(\gg_2)$ and $\rho_{2}^{L}, \rho_{2}^{R}: \gg_2\lr \End(\gg_1)$ be linear maps. Then $\big(\gg_1, \gg_2, (\rho_1^L, \rho_1^R), (\rho_2^L, \rho_2^R)\big)$ is a matched pair of $(\gg_{1},[,]_{\gg_{1}})$ and $(\gg_{2},[,]_{\gg_{2}})$ if and only if $\gg_1\oplus \gg_2$ is a Leibniz algebra, for all $x, y\in \gg_1, u, v\in \gg_2$, the multiplication is defined as follows:
 \begin{eqnarray}\label{eq:57}
 [x+u, y+v]_{\bowtie}:=[x, y]_{\gg_1}+\rho_{2}^{R}(v)x+\rho_{2}^{L}(u)y+[u, v]_{\gg_2}+\rho_{1}^{R}(y)u+\rho_{1}^{L}(x)v.
 \end{eqnarray}
 \end{lem}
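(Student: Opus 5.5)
This lemma is quoted from \cite{MSZ}, where the notion of a matched pair of Leibniz algebras is defined by a list of compatibility identities. The proof is the standard semi-direct-product argument. The plan is to expand the Leibniz identity \eqref{eq:1} for the bracket $[,]_{\bowtie}$ of \eqref{eq:57} on arbitrary elements of $\gg_1\oplus\gg_2$. We then sort the resulting terms by degree and by which summand they land in.

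First, since \eqref{eq:1} is trilinear, it suffices to verify it on homogeneous triples, each entry lying in $\gg_1$ or in $\gg_2$. That gives eight cases. The triple $(x,y,z)$ with all entries in $\gg_1$ reduces exactly to the Leibniz identity of $\gg_1$, and the case with all entries in $\gg_2$ reduces to that of $\gg_2$.

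Next we handle the mixed cases, such as $(x,y,u)$, $(x,u,y)$ and $(u,x,y)$ with two entries in $\gg_1$, and symmetrically those with two entries in $\gg_2$. For each one we compute both sides of \eqref{eq:1} using \eqref{eq:57} and split the result into its $\gg_1$-component and $\gg_2$-component.

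\begin{itemize}
\item The $\gg_2$-components of the cases with two $\gg_1$-entries give precisely the three conditions of Definition \ref{defi:repLa}, saying that $(\gg_2,\rho_1^L,\rho_1^R)$ is a representation of $\gg_1$.
\item The $\gg_1$-components of these cases give the mixed compatibility identities. These involve $\rho_2^{L}$ and $\rho_2^{R}$ evaluated on terms like $\rho_1^{L}(x)u$, together with brackets in $\gg_1$.
\item Swapping the roles of $1$ and $2$ yields the representation conditions for $(\gg_1,\rho_2^L,\rho_2^R)$ and the remaining compatibility conditions.
\end{itemize}

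The collection of all these identities is exactly the definition of a matched pair in \cite{MSZ}. Each step of the computation is reversible, because the components in $\gg_1$ and in $\gg_2$ are independent. This gives both directions of the equivalence at once.

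The main obstacle is bookkeeping rather than any conceptual difficulty. The Leibniz identity is not symmetric, so the three positions in each mixed triple give genuinely different identities, and several of them must be matched with the listed matched-pair axioms after rearrangement. In particular, the third representation axiom $-\rho^R(y)\rho^L(x)=\rho^R(y)\rho^R(x)$ appears as the difference of two of the raw identities. I would organize the computation in a table indexed by the position of the $\gg_2$-entry to avoid sign errors.
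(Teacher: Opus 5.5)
Your proposal is correct. The paper gives no proof of this lemma and only quotes it from \cite{MSZ}, and your componentwise expansion of the Leibniz identity for $[,]_{\bowtie}$ on the eight homogeneous triples is the standard argument behind that result. Because the $\gg_1$- and $\gg_2$-components separate in the direct sum, both directions follow at once, provided the raw identities are rearranged to match the listed axioms, as you note for $-\rho^R(y)\rho^L(x)=\rho^R(y)\rho^R(x)$.
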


 Having established the basic framework, we now turn to extending the theory of matched pairs of Leibniz algebras to the setting of Reynolds Leibniz algebras.

 \begin{defi}\label{de:61}  Let $(\gg_1, \rr_{\gg_1})$ and $(\gg_2, \rr_{\gg_2})$ be two Reynolds Leibniz algebras. A \textbf{matched pair of $(\gg_1, \rr_{\gg_1})$ and $(\gg_2, \rr_{\gg_2})$} is a quadruple $\big((\gg_1, \rr_{\gg_1}),(\gg_2, \rr_{\gg_2}), (\rho_1^L, \rho_1^R),(\rho_2^L, \rho_2^R)\big)$ such that $(\gg_2, \rho_1^L, \rho_1^R, \rr _{\gg_2})$ is a representation of $(\gg_1, \rr_{\gg_1})$, $(\gg_1, \rho_2^L$, $\rho_2^R, \rr_{\gg_1})$ is a representation of $(\gg_2, \rr_{\gg_2})$ and $\big(\gg_1, \gg_2, (\rho_1^L, \rho_1^R),(\rho_2^L, \rho_2^R)\big)$ is a matched pair of $(\gg_{1}, [,]_{\gg_{1}})$ and $(\gg_{2}$, $[,]_{\gg_{2}})$.
 \end{defi}

 \begin{thm}\label{thm:62}  Let $(\gg_1, \rr_{\gg_1})$, $(\gg_2, \rr_{\gg_2})$ be two Reynolds Leibniz algebras, $\rho_{1}^{L}, \rho_{1}^{R}: \gg_1\lr \End(\gg_2)$ and $\rho_{2}^{L}, \rho_{2}^{R}: \gg_2\lr \End(\gg_1)$ be linear maps. Define a linear map of $\gg_1\oplus \gg_2$ by
 \begin{eqnarray}\label{eq:63}
 \rr_{\gg_1\oplus\gg_2}:\gg_1\oplus \gg_2\lr \gg_1\oplus\gg_2,~~~ \rr _{\gg_1\oplus\gg_2}(x+u):=\rr_{\gg_1}(x)+\rr_{\gg_2}(u), \quad \forall~x\in \gg_1, u\in \gg_2.
 \end{eqnarray}
 Then $\gg_1\oplus \gg_2$ together with the multiplication Eq.(\ref{eq:57}) and linear map Eq.(\ref{eq:63}) is a Reynolds Leibniz algebra if and only if $\big((\gg_1, \rr _{\gg_1}),(\gg_2, \rr_{\gg_2}), (\rho_1^L, \rho_1^R),(\rho_2^L$, $\rho_2^R)\big)$ is a matched pair of $(\gg_1, \rr_{\gg_1})$ and $(\gg_2, \rr_{\gg_2})$.
 \end{thm}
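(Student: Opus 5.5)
By Lemma \ref{lem:56}, $(\gg_1\oplus\gg_2,[,]_{\bowtie})$ is a Leibniz algebra if and only if $\big(\gg_1, \gg_2, (\rho_1^L, \rho_1^R),(\rho_2^L, \rho_2^R)\big)$ is a matched pair of the underlying Leibniz algebras. By Definition \ref{de:61}, it therefore remains to prove the following, under the standing assumption that the Leibniz matched pair condition holds. The map $\rr_{\gg_1\oplus\gg_2}$ of Eq.(\ref{eq:63}) satisfies the weighted Reynolds identity Eq.(\ref{eq:2}) for $[,]_{\bowtie}$ if and only if Eqs.(\ref{eq:3})--(\ref{eq:4}) hold for $(\gg_2,\rho_1^L,\rho_1^R,\rr_{\gg_2})$ as a representation of $(\gg_1,\rr_{\gg_1})$, and for $(\gg_1,\rho_2^L,\rho_2^R,\rr_{\gg_1})$ as a representation of $(\gg_2,\rr_{\gg_2})$.

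The plan is to expand both sides of Eq.(\ref{eq:2}) on a general pair $X=x+u$, $Y=y+v$ with $x,y\in\gg_1$, $u,v\in\gg_2$. Both sides are bilinear in $(X,Y)$, and $\rr_{\gg_1\oplus\gg_2}$ preserves each summand. Hence the identity for all $X,Y$ is equivalent to the identity for the four pure pairs $(x,y)$, $(u,v)$, $(x,v)$, $(u,y)$. We also project every resulting term onto $\gg_1$ and $\gg_2$.

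\begin{itemize}
\item For the pairs $(x,y)$ and $(u,v)$, the bracket $[,]_{\bowtie}$ restricts to $[,]_{\gg_1}$, resp.\ $[,]_{\gg_2}$. The identity then holds automatically, since $\rr_{\gg_1}$ and $\rr_{\gg_2}$ are Reynolds operators.
\item For the pair $(x,v)$, we have $[\rr_{\gg_1}(x),\rr_{\gg_2}(v)]_{\bowtie}=\rho_2^R(\rr_{\gg_2}(v))\rr_{\gg_1}(x)+\rho_1^L(\rr_{\gg_1}(x))\rr_{\gg_2}(v)$, and similarly for the terms $[x,\rr(v)]_{\bowtie}$ and $[\rr(x),v]_{\bowtie}$. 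The $\gg_2$-component of Eq.(\ref{eq:2}) reads
\[\rho_1^L(\rr_{\gg_1}(x))\rr_{\gg_2}(v)+\lambda\rr_{\gg_2}\big(\rho_1^L(\rr_{\gg_1}(x))\rr_{\gg_2}(v)\big)=\rr_{\gg_2}\big(\rho_1^L(x)\rr_{\gg_2}(v)\big)+\rr_{\gg_2}\big(\rho_1^L(\rr_{\gg_1}(x))v\big),\]
which is exactly Eq.(\ref{eq:3}) for $(\gg_2,\rho_1^L,\rho_1^R,\rr_{\gg_2})$. The $\gg_1$-component is Eq.(\ref{eq:4}) for $(\gg_1,\rho_2^L,\rho_2^R,\rr_{\gg_1})$ with respect to $(\gg_2,\rr_{\gg_2})$, with the roles of $x$ and $v$ exchanged.
\item Symmetrically, the pair $(u,y)$ gives Eq.(\ref{eq:3}) for $\rho_2^L$ in its $\gg_1$-component, and Eq.(\ref{eq:4}) for $\rho_1^R$ in its $\gg_2$-component.
\end{itemize}

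The four component identities obtained are precisely Eqs.(\ref{eq:3})--(\ref{eq:4}) for the two representations. Both directions of the equivalence follow at once, since each step above is an "if and only if". This argument mirrors the omitted proof of Proposition \ref{pro:12}.

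The only delicate point is bookkeeping. In the mixed cases one must be careful with the order of arguments in $\rho^R$, since $\rho_2^R(v)x$ comes from $[x,v]$ and not from $[v,x]$. One must also check that each component identity matches Eq.(\ref{eq:4}) literally, with $\rr(x)$ in the operator slot and $\alpha=\rr$ applied to the module element. Nothing beyond direct expansion is needed.
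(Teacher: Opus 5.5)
Your proof is correct and follows the same route as the paper. Like the paper, you expand the weighted Reynolds identity for $\mathfrak{R}_{\mathfrak{g}_1\oplus\mathfrak{g}_2}$ under the bracket Eq.~(\ref{eq:57}), identify the resulting conditions with Eqs.~(\ref{eq:3})--(\ref{eq:4}) for the two representations, and invoke Lemma~\ref{lem:56} for the Leibniz part. Your split into the four pure pairs and the two components is a tidier bookkeeping of the paper's single expansion on $x+u$, $y+v$, and each of your component identifications checks out.
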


 \begin{proof} For all $x, y\in \gg_1, u, v\in \gg_2$, one has
 \begin{eqnarray*}
 &&\hspace{-10mm}[\rr_{\gg_1\oplus\gg_2}(x+u), \rr_{\gg_1\oplus\gg_2}(y+v)]+\lambda \rr _{\gg_1\oplus\gg_2}\big([\rr_{\gg_1\oplus\gg_2}(x+u), \rr _{\gg_1\oplus\gg_2}(y+v)]\big)\\
 &=&[\rr_{\gg_1}(x),\rr_{\gg_1}(y)]_{\gg_1}+\rho_{2}^{R}(\rr_{\gg_2}(v))\rr _{\gg_1}(x)+\rho_{2}^{L}(\rr_{\gg_2}(u))\rr_{\gg_1}(y)+[\rr_{\gg_2}(u),\rr _{\gg_2}(v)]_{\gg_2}\\
 &&+\rho_{1}^{R}(\rr_{\gg_1}(y))\rr_{\gg_2}(u)+\rho_{1}^{L}(\rr_{\gg_1}(x))\rr _{\gg_2}(v)+\lambda \rr_{\gg_1}([\rr_{\gg_1}(x),\rr_{\gg_1}(y)]_{\gg_1})\\
 &&+\lambda \rr_{\gg_1}(\rho_{2}^{R}(\rr_{\gg_2}(v))\rr_{\gg_1}(x))+\lambda \rr _{\gg_1}(\rho_{2}^{L}(\rr_{\gg_2}(u))\rr_{\gg_1}(y))+\lambda \rr_{\gg_2}([\rr _{\gg_2}(u),\rr_{\gg_2}(v)]_{\gg_2})\\
 &&+\lambda \rr_{\gg_2}(\rho_{1}^{R}(\rr_{\gg_1}(y))\rr_{\gg_2}(u))+\lambda \rr _{\gg_2}(\rho_{1}^{L}(\rr_{\gg_1}(x))\rr_{\gg_2}(v)),\\
 &&\hspace{-10mm}\rr_{\gg_1\oplus\gg_2}([\rr_{\gg_1\oplus\gg_2}(x+u), y+v]+[x+u, \rr _{\gg_1\oplus\gg_2}(y+v)])\\
 &=&\rr_{\gg_1}([\rr_{\gg_1}(x), y]_{\gg_1})+\rr_{\gg_1}(\rho_{2}^{R}(v)\rr _{\gg_1}(x))+\rr_{\gg_1}(\rho_{2}^{L}(\rr_{\gg_2}(u))y)
 +\rr_{\gg_2}([\rr_{\gg_2}(u), v]_{\gg_2})\\
 &&+\rr_{\gg_2}(\rho_{1}^{R}(y)\rr_{\gg_2}(u))+\rr_{\gg_2}(\rho_{1}^{L}(\rr _{\gg_1}(x)v)+\rr_{\gg_1}([x,\rr_{\gg_1}(y)]_{\gg_1})
 +\rr_{\gg_1}(\rho_{2}^{R}(\rr_{\gg_2}(v))x)\\
 &&+\rr_{\gg_1}(\rho_{2}^{L}(u)\rr_{\gg_1}(y))+\rr_{\gg_2}([u, \rr _{\gg_2}(v)]_{\gg_2})+\rr_{\gg_2}(\rho_{1}^{R}(\rr_{\gg_1}(y))u)+\rr _{\gg_2}(\rho_{1}^{L}(x)\rr_{\gg_2}(v)).
 \end{eqnarray*}
 Then $\rr_{\gg_1\oplus\gg_2}$ is a Reynolds operator on $\gg_1\oplus \gg_2$ if and only if $(\gg_2, \rho_1^L, \rho_1^R, \rr_{\gg_2})$ is a representation of $(\gg_1$, $\rr _{\gg_1})$, $(\gg_1, \rho_2^L, \rho_2^R, \rr_{\gg_1})$ is a representation of $(\gg_2, \rr_{\gg_2})$ by comparing the two sides of the equations above. Thus we can finish the proof by Lemma \ref{lem:56}.
 \end{proof}

 \subsection{Manin triple of Reynolds Leibniz algebra}

 First we recall from \cite{TS} the results about skew-symmetric quadratic Leibniz algebras.

 \begin{defi}\label{defi:64} A \textbf{skew-symmetric quadratic Leibniz algebra} $(\gg, \mathfrak{B})$ is a Leibniz algebra $(\gg, [,])$ equipped with a nondegenerate skew-symmetric bilinear form $\mathfrak{B}$ ~\big(in sence of $\mathfrak{B}(x,y)=-\mathfrak{B}(y,x)\big)$, such that the following invariant condition holds:
 \begin{eqnarray}
 \mathfrak{B}(x, [y, z])=\mathfrak{B}([x, z], y)+\mathfrak{B}([z, x], y), \forall~x, y, z\in \gg.\label{eq:65}
 \end{eqnarray}
 \end{defi}

 \begin{rmk}  A skew-symmetric quadratic Leibniz algebra $(\gg, \mathfrak{B})$ satisfies
 \begin{eqnarray}
 \mathfrak{B}(x, [y, z])=-\mathfrak{B}([y, x], z),~~\forall~x, y, z\in \gg. \label{eq:66}
 \end{eqnarray}
 \end{rmk}

 \begin{defi}\label{de:67}\cite{TS} Let $(\gg,[,])$ be a Leibniz algebra. Suppose that there is a Leibniz algebra structure $(\gg^*, [,]_{\gg^*})$ on its dual space $\gg^*$, where $[,]_{\gg^*}$ is the bracket product on $\gg^*$. We construct a Leibniz algebra structure on the direct sum $\gg\oplus \gg^*$ of the underlying vector space of $\gg$ and $\gg^*$ such that $(\gg,[,])$ and $(\gg^*,[,]_{\gg^*})$ are subalgebras and the nondegenerate skew-symmetric bilinear form $\mathfrak{B}_d$ on $\gg\oplus \gg^*$ given by
 \begin{eqnarray}\label{eq:68}
 \mathfrak{B}_d(x+\xi,y+\eta)=\langle \xi, y\rangle-\langle \eta, x\rangle,  \forall~x, y\in\gg, \xi, \eta\in \gg^*,
 \end{eqnarray}
 then $(\gg\oplus \gg^*, \mathfrak{B}_d)$ is a skew-symmetric quadratic Leibniz algebra. Such a construction is called a \textbf{Manin triple of Leibniz algebra associated to $(\gg,[,])$ and $(\gg^*, [,]_{\gg^*})$} and we denoted it by $\big((\gg\oplus \gg^*, \mathfrak{B}_d), \gg, \gg^*\big)$.
 \end{defi}

 Now we extend the concept above to Reynolds Leibniz algebras.

 \begin{defi}\label{defi:69}  A \textbf{skew-symmetric quadratic Reynolds Leibniz algebra} is a triple $(\gg, \rr,  \mathfrak{B})$, where $(\gg, \rr)$ is a Reynolds Leibniz algebra, and $(\gg, \mathfrak{B})$ is a skew-symmetric quadratic Leibniz algebra.
 \end{defi}

 Let $\widehat{\rr}$ denote the adjoint operator of $\rr$ by the following way:
 \begin{eqnarray}\label{eq:70}
 \mathfrak{B}\big(\rr(x), y\big)=\mathfrak{B}\big(x, \widehat{\rr}(y)\big),~ \forall~ x, y\in \gg.
 \end{eqnarray}

 \begin{pro}\label{pro:71}  Let $(\gg, \rr, \mathfrak{B})$  be a skew-symmetric quadratic Reynolds Leibniz algebra, then $(\gg^*, L^*, -L^*-R^*, {\widehat{\rr}}^*)$ is a representation of $(\gg, \rr)$, and further, it is equivalent to $(\gg, L, R, \rr)$ as representations of $(\gg, \rr)$.

 Conversely, let $(\gg, \rr)$ be a Reynolds Leibniz algebra, $S:\gg\lr \gg$ be adjoint admissible to $(\gg, \rr)$. If $(\gg^*, L^*, -L^*-R^*, S^*)$ is equivalent to $(\gg, L, R, \rr)$ as representations of $(\gg, \rr)$, then there is a nondegenerate invariant bilinear $\mathfrak{B}$ such that $(\gg, \rr,  \mathfrak{B})$ is a skew-symmetric quadratic Reynolds Leibniz algebra and $\widehat{\rr}=S$.
 \end{pro}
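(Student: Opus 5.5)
The plan is to use the linear isomorphism $\phi:\gg\to\gg^*$, $\langle\phi(x),y\rangle:=\mathfrak{B}(x,y)$, which is bijective because $\mathfrak{B}$ is nondegenerate. I will check that it intertwines all the structure maps.

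\emph{Left actions.} For $x,y,z\in\gg$,
\[
\langle L^*_x\phi(y),z\rangle=-\mathfrak{B}(y,[x,z])=\mathfrak{B}([x,y],z)=\langle\phi(L_xy),z\rangle ,
\]
where the middle equality is Eq.~\eqref{eq:66}. Hence $\phi\circ L_x=L^*_x\circ\phi$.

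\emph{Right actions.} We need $\phi\circ R_x=(-L^*_x-R^*_x)\circ\phi$. Pairing with $z$, the right-hand side gives $\mathfrak{B}(y,[x,z])+\mathfrak{B}(y,[z,x])$. Apply Eq.~\eqref{eq:65} with the roles $(y;x,z)$ to the first term: $\mathfrak{B}(y,[x,z])=\mathfrak{B}([y,z],x)+\mathfrak{B}([z,y],x)$. The terms then need to combine to $\mathfrak{B}([y,x],z)$, using skew-symmetry together with Eq.~\eqref{eq:65} and Eq.~\eqref{eq:66} in various substitutions. This is the one step where the index bookkeeping is delicate, and I expect it to be the main obstacle. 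If the direct route stalls, I would first try a cleaner path: derive the identity $\mathfrak{B}([y,z]+[z,y],x)=-\mathfrak{B}(y,[z,x])$ from Eq.~\eqref{eq:65} combined with skew-symmetry, then substitute it back in.

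\emph{Operators.} We need $\phi\circ\rr=\widehat{\rr}^*\circ\phi$. Directly,
\[
\langle\widehat{\rr}^*\phi(x),y\rangle=\mathfrak{B}(x,\widehat{\rr}(y))=\mathfrak{B}(\rr(x),y)=\langle\phi(\rr x),y\rangle
\]
by Eq.~\eqref{eq:70}.

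Once $\phi$ intertwines $L$ with $L^*$, $R$ with $-L^*-R^*$, and $\rr$ with $\widehat{\rr}^*$, the quadruple $(\gg^*,L^*,-L^*-R^*,\widehat{\rr}^*)$ is the transport of the adjoint representation $(\gg,L,R,\rr)$ of Remark~\ref{rmk:9}. Conditions \eqref{eq:3}--\eqref{eq:4} are preserved under conjugation by $\phi$, so it is a representation of $(\gg,\rr)$ and is equivalent to the adjoint one. Equivalently, by Corollary~\ref{cor:23}, $\widehat{\rr}$ is adjoint admissible.

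\emph{Converse.} Let $f:\gg\to\gg^*$ be the given equivalence and define $\mathfrak{B}(x,y):=\langle f(x),y\rangle$; it is nondegenerate because $f$ is bijective. Reading the computations above backwards:
\begin{itemize}
\item $f\circ L_x=L^*_x\circ f$ gives Eq.~\eqref{eq:66};
\item $f\circ R_x=(-L^*_x-R^*_x)\circ f$ gives $\mathfrak{B}(y,[x,z]+[z,x])=-\mathfrak{B}([y,x],z)$, and together with Eq.~\eqref{eq:66} this yields the invariance Eq.~\eqref{eq:65};
\item $f\circ\rr=S^*\circ f$ gives $\mathfrak{B}(\rr(x),y)=\mathfrak{B}(x,S(y))$, so $\widehat{\rr}=S$.
\end{itemize}

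The remaining point is skew-symmetry, which does not come for free from $f$. If $\mathfrak{B}$ as defined is not skew-symmetric, I would replace it by its skew part $\tfrac12(\mathfrak{B}(x,y)-\mathfrak{B}(y,x))$ and check that the invariance identities survive. The delicate point is nondegeneracy of this skew part. If that cannot be secured, the honest fallback is to note that the statement tacitly requires (or the authors assume) that the equivalence $f$ satisfies $\langle f(x),y\rangle=-\langle f(y),x\rangle$.
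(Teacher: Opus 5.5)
\textbf{Forward direction.} Your route differs from the paper's, and is cleaner. The paper checks \eqref{eq:24}--\eqref{eq:25} for $\widehat{\rr}$ by a direct computation with $\mathfrak{B}$ and \eqref{eq:2}, and then cites \cite[Proposition 3.9]{MSZ} for the equivalence given by $\phi$. In your transport-of-structure argument, once $\phi$ intertwines $L,R,\rr$ with $L^*,-L^*-R^*,\widehat{\rr}^*$, the representation property and the equivalence come out together, so that computation is unnecessary.

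As written, though, the argument has a hole at its only nontrivial step, $\mathfrak{B}([y,x],z)=\mathfrak{B}(y,[x,z])+\mathfrak{B}(y,[z,x])$, which you leave unproved. The fallback identity you propose, $\mathfrak{B}([y,z]+[z,y],x)=-\mathfrak{B}(y,[z,x])$, is false in general. By \eqref{eq:65} its left side equals $\mathfrak{B}(y,[x,z])$, so it would give $\mathfrak{B}(y,[x,z]+[z,x])=0$ for all $y$. By nondegeneracy $[x,z]=-[z,x]$, and then \eqref{eq:65} gives $\mathfrak{B}(x,[y,z])=0$, i.e.\ $\gg$ is abelian. Substituting it back would also make your right-hand side $0$ rather than $\mathfrak{B}([y,x],z)$.

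The step is two lines:
\begin{itemize}
\item by \eqref{eq:66}, $\mathfrak{B}(y,[x,z])=-\mathfrak{B}([x,y],z)$;
\item by \eqref{eq:65}, $\mathfrak{B}(y,[z,x])=\mathfrak{B}([y,x],z)+\mathfrak{B}([x,y],z)$.
\end{itemize}
Adding these gives $\mathfrak{B}([y,x],z)$. With this inserted, the forward direction is complete.

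\textbf{Converse.} The relation coming from $f\circ R_x=(-L^*_x-R^*_x)\circ f$ is $\mathfrak{B}(y,[x,z]+[z,x])=+\mathfrak{B}([y,x],z)$. Your minus sign contradicts your own forward computation. With the correct sign, it and \eqref{eq:66} give \eqref{eq:65} without any symmetry assumption:
\[
\mathfrak{B}([a,c],b)+\mathfrak{B}([c,a],b)=\mathfrak{B}(a,[c,b])+\mathfrak{B}(a,[b,c])-\mathfrak{B}(a,[c,b])=\mathfrak{B}(a,[b,c]).
\]
Your suspicion about skew-symmetry is correct. It is a defect of the statement itself, which the paper's ``similar argument'' passes over.

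Over a field of characteristic $\neq 2$, take $\gg=Ke$ with zero bracket and $\rr=S=0$. Then \eqref{eq:2}, \eqref{eq:24} and \eqref{eq:25} hold trivially. Every linear isomorphism $\gg\to\gg^*$ is an equivalence between $(\gg^*,L^*,-L^*-R^*,S^*)$ and $(\gg,L,R,\rr)$. Yet a one-dimensional space carries no nondegenerate skew-symmetric form.

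So the skew-part repair cannot work in general. Here the skew part is $0$. Moreover, transposing $\mathfrak{B}$ does not preserve $\mathfrak{B}(\rr(x),y)=\mathfrak{B}(x,S(y))$, so even a nondegenerate skew part may lose $\widehat{\rr}=S$. Your fallback is the right fix: the converse holds precisely when the equivalence can be chosen with $\langle f(x),y\rangle=-\langle f(y),x\rangle$. Without that, one only obtains a nondegenerate form satisfying \eqref{eq:65}, \eqref{eq:66} and $\widehat{\rr}=S$.
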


 \begin{proof} For all $x, y, z\in \gg$, by Eq.(\ref{eq:2}), we obtain
 \begin{eqnarray*}
 \mathfrak{B}\big(x, [\rr(y), \rr(z)]\big)+\mathfrak{B}\big(x, \lambda \rr([\rr(y),\rr (z)])\big)&=&\mathfrak{B}\big(x, \rr([\rr(y), z])\big)+\mathfrak{B}\big(x, \rr[y, \rr (z)]\big).\\
 &\Updownarrow&\\
 -\mathfrak{B}\big([\rr(y), x], \rr(z)\big)-\lambda\mathfrak{B}\big([\rr(y), \widehat{\rr }(x)], \rr(z)\big)&=&-\mathfrak{B}\big([y, \widehat{\rr}(x)], \rr (z)\big)-\mathfrak{B}\big([\rr(y), \widehat{\rr}(x)], z\big).\\
 &\Updownarrow&\\
 \mathfrak{B}\big(\widehat{\rr}([\rr(y), x]), z\big)+\mathfrak{B}\big(\lambda\widehat{\rr }[\rr(y), {\widehat{\rr}}(x)], z\big)&=&\mathfrak{B}\big(\widehat{\rr}[y, \widehat{\rr }(x)], z\big)+\mathfrak{B}\big([\rr(y), \widehat{\rr}(x)], z\big).
 \end{eqnarray*}
 Then $\widehat{\rr}([\rr(y), x])+\lambda\widehat{\rr}([\rr(y), {\widehat{\rr}}(x)])=[\rr (y), \widehat{\rr}(x)]+\widehat{\rr}([y, \widehat{\rr}(x)])$ by the nondegeneracy of $\mathfrak{B}$. So Eq.(\ref{eq:24}) holds for $\widehat{\rr}$. Similarly,
 \begin{eqnarray*}
 0&\stackrel{(\ref{eq:2})}{=}&\mathfrak{B}\big(x, [\rr(y), \rr(z)]+\lambda \rr([\rr (y),\rr(z)])-\rr([\rr(y), z])-\rr([y, \rr(z)])\big)\\
 &\stackrel{(\ref{eq:65})(\ref{eq:70})}{=}&\mathfrak{B}\big([x, \rr(z)]+[\rr(z),x], \rr (y)\big)+\lambda \mathfrak{B}\big([\widehat{\rr}(x), \rr(z)]+[\rr(z), \widehat{\rr}(x)], \rr(y)\big)\\
 &&-\mathfrak{B}\big([\widehat{\rr}(x), \rr(z)]+[\rr(z), \widehat{\rr}(x)], y\big)-\mathfrak{B}\big([\widehat{\rr}(x), z]+[z, \widehat{\rr}(x)], \rr(y)\big)\\
 &\stackrel{(\ref{eq:70})}{=}&\mathfrak{B}\big(\widehat{\rr}([x, \rr(z)])+\widehat{\rr }([\rr(z),x]), y\big)+\mathfrak{B}\big(\lambda\widehat{\rr}([\widehat{\rr}(x), \rr (z)])+\lambda\widehat{\rr}([\rr(z), \widehat{\rr}(x)]), y\big)\\
 &&-\mathfrak{B}\big([\widehat{\rr}(x), \rr(z)]+[\rr(z), \widehat{\rr}(x)], y\big)-\mathfrak{B}\big(\widehat{\rr}([\widehat{\rr}(x), z])+\widehat{\rr}([z, \widehat{\rr}(x)]), y\big)\\
 &\stackrel{(\ref{eq:24})}{=}&\mathfrak{B}\big(\widehat{\rr}([x, \rr(z)])+\lambda \widehat{\rr}([\widehat{\rr}(x), \rr(z)])-\widehat{\rr}([\widehat{\rr}(x), z])-[\widehat{\rr}(x), \rr(z)], y\big).
 \end{eqnarray*}
 Hence Eq.(\ref{eq:25}) holds, $(\gg^*, L^*, -L^*-R^*, {\widehat{\rr}}^*)$ is a representation of $(\gg, \rr)$. Define a linear map $\phi:\gg\lr \gg^* $:
 \begin{eqnarray*}\label{eq:71}
 \phi(x)y:=\langle\phi(x),y\rangle=\mathfrak{B}(x,y), \forall~x, y\in \gg.
 \end{eqnarray*}
 The nondegeneracy of $\mathfrak{B}$ gives the bijectivity of $\phi$. 
 Then by \cite[Proposition 3.9]{MSZ}, 
 $(\gg, L, R, \rr)$ is equivalent to $(\gg^*, L^*, -L^*-R^*, {\widehat{\rr}}^*)$ as representations of $(\gg, \rr)$.

 Conversely, suppose that $\phi:\gg\lr \gg^*$ is the linear isomorphism giving the equivalence between $(\gg, L, R, \rr)$ and $(\gg^*, L^*, -L^*-R^*, {\widehat{\rr}}^*)$. Define a bilinear form $\mathfrak{B}$ on $\gg$ by $\mathfrak{B}(x, y):=\big\langle\phi(x), y\big\rangle,~\forall~x, y\in \gg$. Then we can obtain the skew-symmetric quadratic Reynolds Leibniz algebra $(\gg, \rr,  \mathfrak{B})$ and $\widehat{\rr}=S$ by a similar argument given in the proof of necessity.
 \end{proof}

 We now extend the notion of Manin triple to Reynolds Leibniz algebras.
 \begin{defi}\label{de:72}  Let $((\gg, [,]), \rr)$ and $(\gg^*, [,]_{\gg^*}, S^*)$ be two Reynolds Leibniz algebras. A \textbf{Manin triple of Reynolds Leibniz algebra associated to $((\gg, [,]), \rr)$ and $(\gg^*, [,]_{\gg^*}, S^*)$}   is a Manin triple $((\gg \oplus \gg^*, \mathfrak{B}_d), \gg, \gg^*)$ of Leibniz algebra associated to $(\gg, [,])$ and $(\gg^*,[,]_{\gg^*})$ such that $(\gg\oplus \gg^*, \rr+S^*, \mathfrak{B}_d)$ is a skew-symmetric quadratic Reynolds Leibniz algebra. We denote it by $(\gg\oplus \gg^*, \rr +S^*, \mathfrak{B}_d)$.
 \end{defi}

 \begin{lem}\label{lem:73}
 Let $(\gg\oplus \gg^*, \rr+S^*, \mathfrak{B}_d)$ be a Manin triple of Reynolds Leibniz algebra associated to $((\gg, [,]), \rr)$ and $(\gg^*, [,]_{\gg^*}, S^*)$. Then
 \begin{enumerate}[(1)]
 \item \label{it:74} The adjoint $\widehat{\rr+S^*}$ of $\rr+S^*$ with respect to $\mathfrak{B}_d$ is $S+\rr^*$. Furthermore, $S+\rr^*$ is adjoint admissible to $(\gg\oplus\gg^*,\rr+S^*)$.
 \item \label{it:75} $S$ is adjoint admissible to $(\gg,\rr)$.
 \item \label{it:76} $\rr^*$ is adjoint admissible to $(\gg^*,S^*)$.
 \end{enumerate}
 \end{lem}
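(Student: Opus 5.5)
For part (1), I will compute the adjoint directly from the definition Eq.~(\ref{eq:70}) with $\mathfrak{B}=\mathfrak{B}_d$ given by Eq.~(\ref{eq:68}). For $x,y\in\gg$ and $\xi,\eta\in\gg^*$ we have
\begin{eqnarray*}
\mathfrak{B}_d\big((\rr+S^*)(x+\xi), y+\eta\big)
&=&\langle S^*(\xi), y\rangle-\langle \eta, \rr(x)\rangle\\
&=&\langle \xi, S(y)\rangle-\langle \rr^*(\eta), x\rangle\\
&=&\mathfrak{B}_d\big(x+\xi, S(y)+\rr^*(\eta)\big).
\end{eqnarray*}
Since $\mathfrak{B}_d$ is nondegenerate, the adjoint is unique, so $\widehat{\rr+S^*}=S+\rr^*$. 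By Definition~\ref{de:72}, $(\gg\oplus\gg^*,\rr+S^*,\mathfrak{B}_d)$ is a skew-symmetric quadratic Reynolds Leibniz algebra. The first half of Proposition~\ref{pro:71} then shows that the adjoint $\widehat{\rr+S^*}$ satisfies Eqs.~(\ref{eq:24})--(\ref{eq:25}) on $(\gg\oplus\gg^*,\rr+S^*)$; its proof establishes exactly these identities for $\widehat{\rr}$. So $S+\rr^*$ is adjoint admissible to $(\gg\oplus\gg^*,\rr+S^*)$.

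For parts (2) and (3), I will restrict the identities from (1) to the subalgebras. Both $\gg$ and $\gg^*$ are Leibniz subalgebras of $\gg\oplus\gg^*$. The operator $\rr+S^*$ restricts to $\rr$ on $\gg$ and to $S^*$ on $\gg^*$, and the operator $S+\rr^*$ restricts to $S$ on $\gg$ and to $\rr^*$ on $\gg^*$. Take $x,y\in\gg$ in Eqs.~(\ref{eq:24})--(\ref{eq:25}) for the pair $(\rr+S^*, S+\rr^*)$. Every bracket that appears, such as $[x,S(y)]$ or $[\rr(x),y]$, lies in $\gg$, and the maps preserve $\gg$. The identities therefore become literally Eqs.~(\ref{eq:24})--(\ref{eq:25}) for $(\gg,\rr)$ with $S$, which proves (2). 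Taking $\xi,\eta\in\gg^*$ in the same way gives the identities for $(\gg^*,S^*)$ with $\rr^*$, using the bracket $[,]_{\gg^*}$, which proves (3).

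The main point needing care is (1): I must make sure Proposition~\ref{pro:71} really delivers adjoint admissibility of $\widehat{\rr}$ on a general skew-symmetric quadratic Reynolds Leibniz algebra, without any assumption specific to $\gg$. Its first paragraph does exactly this: it derives Eqs.~(\ref{eq:24}) and (\ref{eq:25}) for $\widehat{\rr}$ using only Eqs.~(\ref{eq:2}), (\ref{eq:65}), (\ref{eq:70}) and nondegeneracy. I can therefore apply it verbatim with $\gg\oplus\gg^*$ in place of $\gg$, and the rest is routine restriction.
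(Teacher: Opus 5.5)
Your proposal is correct and follows essentially the same route as the paper. You identify $\widehat{\rr+S^*}=S+\rr^*$ by the same pairing computation with $\mathfrak{B}_d$, invoke Proposition~\ref{pro:71} to get adjoint admissibility of $S+\rr^*$ on $\gg\oplus\gg^*$, and obtain (2) and (3) by restricting Eqs.~(\ref{eq:24})--(\ref{eq:25}) on the double to the subalgebras $\gg$ and $\gg^*$, which is exactly the paper's specialization $\xi=\eta=0$, respectively $x=y=0$.
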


 \begin{proof}
 \ref{it:74} For all $x,y \in \gg, \xi, \eta\in \gg^*$, we have
 \begin{eqnarray*}
 \mathfrak{B}_d\big((\rr+S^*)(x+\xi), y+\eta\big)
 &=&\mathfrak{B}_d\big(\rr(x)+S^*(\xi), y+\eta\big)\stackrel {(\ref{eq:68})}{=}\big\langle S^*(\xi), y\big\rangle-\big\langle \eta, \rr(x)\big\rangle\\
 &=&\big\langle \xi, S(y)\big\rangle-\big\langle \rr^*(\eta), x\big\rangle=\mathfrak{B}_d\big(x+\xi, (S+\rr^*)(y+\eta)\big).
 \end{eqnarray*}
 Hence $\widehat{\rr+S^*}=S+\rr^*$. In addition, by Proposition \ref{pro:71}, $S+\rr^*$ is adjoint admissible to $(\gg\oplus\gg^*,\rr+S^*)$.

 \ref{it:75} By Item\ref{it:74}, $S+\rr^*$ is adjoint admissible to $(\gg\oplus\gg^*,\rr +S^*)$, then according to the Eqs.(\ref{eq:24}) and (\ref{eq:25}), for all $x, y \in \gg, \xi, \eta\in \gg^*$, the following two equations hold.
 \begin{eqnarray}\label{eq:77}
 &&\hspace{-16mm}(S+\rr^*)\Big([x+\xi, S(y)+\rr^*(\eta)]\Big)+[\rr(x)+S^*(\xi),S(y)+{\rr ^*}(\eta)]\nonumber\\
 &=&(S+\rr^*)\Big([\rr(x)+S^*(\xi), y+\eta]\Big)+\lambda(S+\rr^*)\Big([\rr(x)+S^*(\xi), S(y)+\rr^*(\eta)]\Big)
 \end{eqnarray}
 and
 \begin{eqnarray}\label{eq:78}
 &&\hspace{-16mm}(S+\rr^*)\Big([S(x)+{\rr^*}(\xi), y+\eta]\Big)+[S(x)+{\rr^*}(\xi), \rr (y)+S^*(\eta)]\nonumber\\
 &=&(S+\rr^*)\Big([x+\xi, \rr(y)+S^*(\eta)]\Big)+\lambda(S+\rr^*)\Big([S(x)+\rr^*(\xi), \rr(y)+S^*(\eta)\Big).
 \end{eqnarray}
 Let $\xi=\eta=0$ Eqs.(\ref{eq:77}) and (\ref{eq:78}), we obtain Item \ref{it:75}.

 \ref{it:76} Item \ref{it:76} can be gotten by setting $x=y=0$ in Eqs.(\ref{eq:77}) and (\ref{eq:78}).
 \end{proof}

 \begin{lem}\label{lem:79}{\em \cite{TS}} Let $(\gg,[,])$ and $(\gg^*, [,]_{\gg^*})$ be two Leibniz algebras. Then the following conditions are equivalent:
 \begin{enumerate}[(1)]
 \item \mlabel{it:2} $\big(\gg, \gg^*, (L^*_\gg, -L^*_\gg-R^*_\gg), (L^*_{\gg^*}, -L^*_{\gg^*}-R^*_{\gg^*})\big)$ is a matched pair of $(\gg,[,])$ and $(\gg^*, [,]_{\gg^*})$,
 \item \mlabel{it:3} $\big((\gg\oplus \gg^*, \mathfrak{B}_d), \gg, \gg^*\big)$ is a Manin triple of Leibniz algebra associated to $(\gg,[,])$ and $(\gg^*, [,]_{\gg^*})$.
 \end{enumerate}
 \end{lem}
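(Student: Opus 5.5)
The plan is to show both implications by identifying the bracket on $\gg\oplus\gg^*$ produced by each side and checking that they agree. By Lemma \ref{lem:56}, condition \ref{it:2} holds if and only if the bracket $[,]_{\bowtie}$ of Eq.(\ref{eq:57}) on $\gg\oplus\gg^*$ is a Leibniz bracket. Here we take $\rho_1^L=L^*_\gg$, $\rho_1^R=-L^*_\gg-R^*_\gg$, $\rho_2^L=L^*_{\gg^*}$ and $\rho_2^R=-L^*_{\gg^*}-R^*_{\gg^*}$.

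Both $\gg$ and $\gg^*$ are subalgebras of $(\gg\oplus\gg^*,[,]_{\bowtie})$ by construction. So for \ref{it:2}$\Rightarrow$\ref{it:3} it remains to verify that $\mathfrak{B}_d$ of Eq.(\ref{eq:68}) satisfies the invariance condition Eq.(\ref{eq:65}). It is clearly nondegenerate and skew-symmetric. By trilinearity, it suffices to check Eq.(\ref{eq:65}) when each of $x,y,z$ lies in $\gg$ or in $\gg^*$, which gives eight cases.

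When all three arguments lie in $\gg$, or all in $\gg^*$, both sides of Eq.(\ref{eq:65}) vanish, since $\mathfrak{B}_d$ is zero on $\gg\times\gg$ and on $\gg^*\times\gg^*$. In the mixed cases, the $\gg$-component and the $\gg^*$-component of the relevant brackets are given by the coadjoint formulas Eqs.(\ref{eq:16})--(\ref{eq:17}). Unwinding them reduces each side to pairings such as $\langle\xi,[y,z]\rangle$ or $\langle[\xi,\eta]_{\gg^*},x\rangle$, and these match term by term. For example, take $x\in\gg^*$ and $y,z\in\gg$. The left side is $\langle x,[y,z]\rangle$. On the right, $[x,z]+[z,x]$ has $\gg^*$-component $(-L^*_\gg-R^*_\gg)(z)x+L^*_\gg(z)x=-R^*_\gg(z)x$, whose pairing with $y$ is $\langle x,[y,z]\rangle$, as required. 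The remaining cases are handled the same way, and I expect them to be routine.

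For \ref{it:3}$\Rightarrow$\ref{it:2}, start from a Leibniz bracket on $\gg\oplus\gg^*$ that restricts to the given brackets and makes $\mathfrak{B}_d$ invariant. Decompose the mixed brackets: $[x,\xi]$ and $[\xi,x]$ into their $\gg$- and $\gg^*$-components. Nondegeneracy of $\mathfrak{B}_d$ together with Eq.(\ref{eq:65}) and its consequence Eq.(\ref{eq:66}) then determines these components. Pairing $[x,\xi]$ and $[\xi,x]$ against arbitrary $y\in\gg$ and $\eta\in\gg^*$ should force
\[
[x,\xi]=-L^*_{\gg^*}(\xi)x+\ldots+L^*_\gg(x)\xi,\qquad [\xi,x]=L^*_{\gg^*}(\xi)x+(-L^*_\gg-R^*_\gg)(x)\xi,
\]
with the $\gg$-component of $[x,\xi]$ equal to $(-L^*_{\gg^*}-R^*_{\gg^*})(\xi)x$. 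This shows the bracket is exactly $[,]_{\bowtie}$ for the stated representations, so Lemma \ref{lem:56} yields the matched pair.

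The main obstacle is the sign bookkeeping. Invariance in the Leibniz setting, Eq.(\ref{eq:65}), is asymmetric, so one must use both Eq.(\ref{eq:65}) and Eq.(\ref{eq:66}) to pin down left and right actions separately. I would first derive the $\gg^*$-component of $[\xi,x]$ via Eq.(\ref{eq:66}), then obtain the symmetric sum $[x,\xi]+[\xi,x]$ via Eq.(\ref{eq:65}), and finally recover $[x,\xi]$ by subtraction.
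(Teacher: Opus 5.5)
Your argument is correct and is the standard proof of this result; the paper gives no proof of its own and simply cites \cite{TS}. Invariance of $\mathfrak{B}_d$ forces the mixed brackets on $\gg\oplus\gg^*$ to be those of $[,]_{\bowtie}$ with the coadjoint actions, so Lemma~\ref{lem:56} applies, and conversely the eight-case check of Eq.~(\ref{eq:65}) for $[,]_{\bowtie}$ goes through exactly like your sample case.

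One bookkeeping fix in your last paragraph: Eq.~(\ref{eq:66}) determines the $\gg^*$-component of $[x,\xi]$ (namely $L^*_\gg(x)\xi$) and the $\gg$-component of $[\xi,x]$ (namely $L^*_{\gg^*}(\xi)x$), not the $\gg^*$-component of $[\xi,x]$. The other two components come from subtracting these from $[x,\xi]+[\xi,x]=-R^*_{\gg^*}(\xi)x-R^*_\gg(x)\xi$, which Eq.~(\ref{eq:65}) supplies, and this reproduces your displayed formulas.
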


 \begin{thm}\label{thm:80}  Let $((\gg, [,]), \rr)$ and $(\gg^*, [,]_{\gg^*}, S^*)$ be two Reynolds Leibniz algebras. Then there is a Manin triple of Reynolds Leibniz algebra $(\gg\oplus \gg^*, \rr+S^*, \mathfrak{B}_d)$ associated to $(\gg,$ $[,], \rr)$ and $(\gg^*, [,]_{\gg^*}, S^*)$ if and only if $\big((\gg, \rr), (\gg^*, S^*), (L^*_\gg, -L^*_\gg-R^*_\gg), (L^*_{\gg^*}, -L^*_{\gg^*}-R^*_{\gg^*})\big)$ is a matched pair of $((\gg, [,]), \rr)$ and $(\gg^*, [,]_{\gg^*}, S^*)$.
 \end{thm}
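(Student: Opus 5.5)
By Lemma \ref{lem:79}, the Leibniz-algebra part of both conditions is the same. The Manin triple of Leibniz algebras $\big((\gg\oplus\gg^*,\mathfrak{B}_d),\gg,\gg^*\big)$ exists exactly when $\big(\gg,\gg^*,(L^*_\gg,-L^*_\gg-R^*_\gg),(L^*_{\gg^*},-L^*_{\gg^*}-R^*_{\gg^*})\big)$ is a matched pair of Leibniz algebras. In both cases the bracket on $\gg\oplus\gg^*$ is the one of Eq.(\ref{eq:57}) with these coadjoint actions. So the plan is to show that, on top of this common data, two conditions are equivalent. The first is that $\rr+S^*$ is a Reynolds operator on $\gg\oplus\gg^*$, which is what remains of Definition \ref{de:72} besides $\mathfrak{B}_d$ being invariant, skew-symmetric and nondegenerate, already supplied by the Leibniz Manin triple. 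The second is the pair of representation conditions in Definition \ref{de:61}.

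For the direction from matched pair to Manin triple, I would apply Theorem \ref{thm:62} with $\gg_1=\gg$, $\gg_2=\gg^*$, $\rr_{\gg_1}=\rr$, $\rr_{\gg_2}=S^*$ and the coadjoint actions. The theorem says that $\gg\oplus\gg^*$ with bracket (\ref{eq:57}) and operator $\rr+S^*$ is a Reynolds Leibniz algebra exactly when we have a matched pair of Reynolds Leibniz algebras. Combined with Lemma \ref{lem:79}, this gives a Leibniz Manin triple whose underlying algebra carries the Reynolds operator $\rr+S^*$. Hence $(\gg\oplus\gg^*,\rr+S^*,\mathfrak{B}_d)$ is a skew-symmetric quadratic Reynolds Leibniz algebra, which is Definition \ref{defi:69}, and we obtain the Manin triple of Definition \ref{de:72}.

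For the converse, the Manin triple gives a Leibniz Manin triple, so Lemma \ref{lem:79} yields the Leibniz matched pair with the coadjoint actions. It also gives that $\rr+S^*$ is a Reynolds operator on the bracket (\ref{eq:57}). The "only if" direction of Theorem \ref{thm:62} then gives both conditions at once: $(\gg^*,L^*_\gg,-L^*_\gg-R^*_\gg,S^*)$ is a representation of $(\gg,\rr)$, and $(\gg,L^*_{\gg^*},-L^*_{\gg^*}-R^*_{\gg^*},\rr)$ is a representation of $(\gg^*,S^*)$. This is exactly the matched pair of Definition \ref{de:61}.

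The only point needing care is identifying the bracket on $\gg\oplus\gg^*$ of the Manin triple with the bracket (\ref{eq:57}) built from the coadjoint actions. That is, for $x\in\gg$ and $\xi\in\gg^*$, the $\gg^*$-component of $[x,\xi]$ must be $L^*_\gg(x)\xi$, and the $\gg^*$-component of $[\xi,x]$ must be $(-L^*_\gg-R^*_\gg)(x)\xi$; the symmetric statements hold for the $\gg$-components. These follow from the invariance (\ref{eq:65}) and (\ref{eq:66}) of $\mathfrak{B}_d$, together with $\gg$ and $\gg^*$ being isotropic subalgebras. This identification is already contained in the proof of Lemma \ref{lem:79}, so I would cite it rather than redo it. As a consistency check, Lemma \ref{lem:73} shows that $S$ is adjoint admissible to $(\gg,\rr)$ and $\rr^*$ to $(\gg^*,S^*)$. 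By Corollary \ref{cor:23} these are the representation conditions for the dual representations, so they agree with what Theorem \ref{thm:62} produces.
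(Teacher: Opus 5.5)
Your proof is correct. The direction from matched pair to Manin triple is the paper's argument: Lemma~\ref{lem:79} together with the ``if'' part of Theorem~\ref{thm:62}. The difference lies in the forward direction, where the paper does not use Theorem~\ref{thm:62} at all.

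\textbf{The paper's forward direction.} It takes the Leibniz matched pair from Lemma~\ref{lem:79} and the two Reynolds representation conditions from Lemma~\ref{lem:73}. That lemma proceeds in four steps:
\begin{enumerate}
\item[(i)] it computes that the $\mathfrak{B}_d$-adjoint of $\rr+S^*$ is $S+\rr^*$;
\item[(ii)] it uses Proposition~\ref{pro:71} to conclude that $S+\rr^*$ is adjoint admissible to $(\gg\oplus\gg^*,\rr+S^*)$;
\item[(iii)] it restricts Eqs.~(\ref{eq:77})--(\ref{eq:78}) to $\xi=\eta=0$ and to $x=y=0$, giving that $S$ is adjoint admissible to $(\gg,\rr)$ and $\rr^*$ is adjoint admissible to $(\gg^*,S^*)$;
\item[(iv)] it reads these as the required representation conditions via Corollary~\ref{cor:23}.
\end{enumerate}

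\textbf{Your forward direction.} You use the ``only if'' half of Theorem~\ref{thm:62}. This extracts Eqs.~(\ref{eq:3})--(\ref{eq:4}) for the coadjoint actions directly from the mixed components of the Reynolds identity for $\rr+S^*$.

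\textbf{Comparison.} Your route is more economical and symmetric: one decomposition handles both directions, and Proposition~\ref{pro:71} and the adjoint computation are not needed. It does rely on identifying the Manin-triple bracket on $\gg\oplus\gg^*$ with the bracket (\ref{eq:57}) built from the coadjoint actions. You flag this correctly, and it does follow from (\ref{eq:65})--(\ref{eq:66}) and the isotropy of $\gg$ and $\gg^*$.

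The paper's route never touches the mixed brackets in the Reynolds part. After restriction, only brackets inside the subalgebras $\gg$ and $\gg^*$ appear. The cost is a detour through the invariant form and adjoints, although the identification is still used implicitly inside Lemma~\ref{lem:79}. What you present as a consistency check is the paper's actual argument.
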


 \begin{proof}
 $(\Longrightarrow)$ Let $(\gg\oplus \gg^*, \rr+S^*, \mathfrak{B}_d)$ be a Manin triple of a Reynolds Leibniz algebra associated to $((\gg, [,]), \rr)$ and $(\gg^*, [,]_{\gg^*}, S^*)$, then $\big((\gg\oplus \gg^*, \mathfrak{B}_d), \gg, \gg^*\big)$ is a Manin triple of Leibniz algebra associated to $(\gg, [,])$ and $(\gg^*, [,]_{\gg^*})$. By Lemma \ref{lem:79}, $\big(\gg,\gg^*, (L^*_\gg, -L^*_\gg-R^*_\gg), (L^*_{\gg^*}, -L^*_{\gg^*}-R^*_{\gg^*})\big)$ is a matched pair of $(\gg,[,])$ and $(\gg^*, [,]_{\gg^*})$. Furthermore, by Lemma \ref{lem:73}, $\big(\gg^*,L^*_\gg, -L^*_\gg-R^*_\gg, S^*)$ is a representation of ($\gg,\rr$) and $\big(\gg, L^*_{\gg^*}, -L^*_{\gg^*}-R^*_{\gg^*},\rr)$ is a representation of ($\gg^*, S^*$). Hence,$\big((\gg, \rr), (\gg^*, S^*), (L^*_\gg, -L^*_\gg-R^*_\gg), (L^*_{\gg^*}, -L^*_{\gg^*}-R^*_{\gg^*})\big)$ is a matched pair of $((\gg, [,]), \rr)$ and $(\gg^*, [,]_{\gg^*}, S^*)$.

 $(\Longleftarrow)$ If $\big((\gg, \rr), (\gg^*, S^*), (L^*_\gg, -L^*_\gg-R^*_\gg), (L^*_{\gg^*}, -L^*_{\gg^*}-R^*_{\gg^*})\big)$ is a matched pair of $((\gg, [,]), \rr)$ and $(\gg^*, [,]_{\gg^*}, S^*)$, then $\big((\gg, \gg^*, (L^*_\gg, -L^*_\gg-R^*_\gg), (L^*_{\gg^*}, -L^*_{\gg^*}-R^*_{\gg^*})\big)$ is a matched pair of $(\gg, [,])$ and $(\gg^*, [,]_{\gg^*})$. By Lemma \ref{lem:79}, \mlabel{it:3} $\big((\gg\oplus \gg^*, \mathfrak{B}_d), \gg, \gg^*\big)$ is a Manin  triple of Leibniz algebra associated to $(\gg,[,])$ and $(\gg^*, [,]_{\gg^*})$. Furthermore, $(\gg_1\oplus \gg_2, \rr+S^*)$ is a Reynolds Leibniz algebra, by Theorem \ref{thm:62}, $(\gg\oplus \gg^*, \rr+S^*, \mathfrak{B}_d)$ is a Manin triple of Reynolds Leibniz algebra associated to $(\gg,$ $[,], \rr)$ and $(\gg^*, [,]_{\gg^*}, S^*)$.
 \end{proof}

 \subsection{Reynolds Leibniz bialgebras} In this subsection, we introduce the notion of a Reynolds Leibniz bialgebra and establish its equivalence with a matched pair of Reynolds Leibniz algebras. We begin by recalling the definitions of a Leibniz coalgebra and a Leibniz bialgebra from \cite{TS}.

 A {\bf Leibniz coalgebra} is a pair $(\gg,\delta)$, where $\gg$ is a linear space, $\delta:\gg\lr \gg\o \gg$ is a linear map, such that the following condition holds:
 \begin{eqnarray*}
 (\id\o \delta)\delta=(\delta\o\id)\delta+(\tau\o \id)(\id\o \delta)\delta.\label{eq:28}
 \end{eqnarray*}
 
 Let $(\gg,[,])$ be a Leibniz algebra and $(\gg, \d)$ be a Leibniz coalgebra. If for all $x, y\in \gg$, the following equations hold:
 \begin{eqnarray*}
 &(R_x\o \id)\delta(y)=\tau\big((R_y\o \id)\delta(x)\big),\label{eq:82}\\
 &\delta\big([x, y]\big)=\big((\id\o R_y-L_y\o \id-R_y\o \id)\circ (\id+ \tau)\big)\delta(x)+(\id\o L_x+L_x \o \id)\delta(y).\label{eq:83}
 \end{eqnarray*}
 Then we call $(\gg, [,], \d)$ a {\bf Leibniz bialgebra}. 

 Now we extend this notion to Reynolds Leibniz bialgebra, we have

 \begin{defi}\label{defi:29}  A {\bf Reynolds Leibniz coalgebra} is a triple $((\gg, \delta), S)$ including a Leibniz coalgebra $(\gg, \delta)$ and a Reynolds operator $S$ on $(\gg, \delta)$, i.e., a linear map $S: \gg\lr \gg$ satisfying:
 \begin{eqnarray}
 (S\o S)\delta+\lambda(S\o S)\delta S=(S\o \id)\delta S+(\id\o S)\delta S. \label{eq:30}
 \end{eqnarray}
 \end{defi}

 \begin{defi}\label{de:87}  A {\bf Reynolds Leibniz bialgebra} is a vector space $\gg$ together with linear maps $[,]: \gg\o \gg\lr \gg$, $\delta : \gg\lr \gg\o \gg$, $\rr,  S: \gg\lr \gg$ such that:
 \begin{enumerate}[(1)]
 \item \label{it:de:87a} $(\gg,[,],\delta)$ is a Leibniz bialgebra.
 \item \label{it:de:87b} $((\gg,[,]),\rr)$ is a Reynolds Leibniz algebra.
 \item \label{it:de:87c}$((\gg,\delta),S)$ is a Reynolds Leibniz coalgebra.
 \item \label{it:de:87d}$S$ is adjoint admissible to $((\gg,[,]),\rr)$. 
 \item \label{it:de:87e} $\rr^*$ is adjoint admissible to $(\gg^*,\delta^*,S^*)$, that is, the following conditions hold:
 \begin{eqnarray}
 &(\id \o \rr)\delta \rr+(S\o \rr)\delta=(S\o \id)\delta \rr+\lambda (S \o \rr)\delta \rr, &\label{eq:88}\\
 &(\rr\o \id)\delta \rr+(\rr\o S)\delta=(\id\o S)\delta \rr+\lambda (\rr\o S)\delta \rr .&\label{eq:89}
 \end{eqnarray}
 \end{enumerate}
 We use $(\gg,[,],\delta,\rr, S)$ (abbr. $((\gg, \rr), \delta, S)$) to denote a Reynolds Leibniz bialgebra.
 \end{defi}

 \begin{rmk} By Example \ref{ex:pro:26}, when $S=-\rr$, then $(\gg, [,], \delta, \rr,  -\rr)$ is a Reynolds Leibniz bialgebra if and only if items \ref{it:de:87a}-\ref{it:de:87c} hold.
 \end{rmk}

 Let $\l=0$ in Definition \ref{de:87}, one can obtain

 \begin{tdef}\label{thm:de:87} A {\bf Rota-Baxter Leibniz bialgebra of weight 0} is a vector space $\gg$ together with linear maps $[,]: \gg\o \gg\lr \gg$, $\delta : \gg\lr \gg\o \gg$, $\rr,  S: \gg\lr \gg$ such that:
 \begin{enumerate}[(1)]
 \item $(\gg,[,],\delta)$ is a Leibniz bialgebra.
 \item $((\gg,[,]),\rr)$ is a Rota-Baxter Leibniz algebra of weight 0.
 \item $((\gg,\delta),S)$ is a Rota-Baxter Leibniz coalgebra of weight 0.
 \item the following conditions hold:
 \begin{eqnarray*}
 &S([x, S(y)])+[\rr(x), S(y)]=S([\rr(x), y]),&\label{eq:24-1}\\
 &S([S(x), y])+[S(x), \rr(y)]=S([x, \rr(y)]),&\label{eq:25-1}\\
 &(\id \o \rr)\delta \rr+(S\o \rr)\delta=(S\o \id)\delta \rr, &\label{eq:88-1}\\
 &(\rr\o \id)\delta \rr+(\rr\o S)\delta=(\id\o S)\delta \rr.&\label{eq:89-1}
 \end{eqnarray*}
 \end{enumerate}
 \end{tdef}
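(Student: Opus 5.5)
This Theorem-Definition is essentially a specialization, so the plan is to substitute $\l=0$ into Definition \ref{de:87} and check, item by item, that each condition becomes the corresponding one listed in the statement. No new structure needs to be built.

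First, item \ref{it:de:87a} does not involve $\l$, so it carries over verbatim. For item \ref{it:de:87b}, setting $\l=0$ in Eq.~\eqref{eq:2} gives
\[
[\rr(x),\rr(y)]=\rr([x,\rr(y)])+\rr([\rr(x),y]).
\]
This is exactly the defining identity of a Rota-Baxter operator of weight $0$ on a Leibniz algebra, as already noted in Remark \ref{rmk:9}(1). Dually, for item \ref{it:de:87c}, setting $\l=0$ in Eq.~\eqref{eq:30} gives
\[
(S\o S)\delta=(S\o\id)\delta S+(\id\o S)\delta S.
\]
This is the weight-$0$ Rota-Baxter coalgebra identity. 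One can confirm this by dualizing: $S^*$ is then a weight-$0$ Rota-Baxter operator on $(\gg^*,\delta^*)$.

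Next, item \ref{it:de:87d} unpacks via Corollary \ref{cor:23} to Eqs.~\eqref{eq:24}--\eqref{eq:25}. Deleting the terms $\l S([\rr(x),S(y)])$ and $\l S([S(x),\rr(y)])$ yields the first two identities of item (4) in the statement. Likewise, item \ref{it:de:87e} is Eqs.~\eqref{eq:88}--\eqref{eq:89}. Deleting the terms $\l(S\o\rr)\delta\rr$ and $\l(\rr\o S)\delta\rr$ yields the last two identities.

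The only point needing some care is the justification that the weight-$0$ specialization of the Reynolds notions coincides with the standard weight-$0$ Rota-Baxter notions. Concretely, the weight-$\l$ Reynolds identity and the weight-$\l$ Rota-Baxter identity differ only in terms carrying a factor of $\l$, so they agree at $\l=0$. A further small check is that the representation-theoretic reading of item (4), via Lemma \ref{lem:18} at $\l=0$, matches the notion of representation of a Rota-Baxter Leibniz algebra of weight $0$; Remark \ref{rmk:9}(1) provides exactly this. Since all of these are direct substitutions, the expected obstacle is merely bookkeeping, so the statement can serve as a definition consistent with Definition \ref{de:87}.
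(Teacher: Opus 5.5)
Your proposal is correct and matches the paper, which obtains this Theorem-Definition simply by setting $\lambda=0$ in Definition \ref{de:87} and gives no further argument. Your item-by-item check is exactly the bookkeeping the statement relies on: Eq.~\eqref{eq:2} becomes the weight-$0$ Rota-Baxter identity, Eq.~\eqref{eq:30} becomes its coalgebra dual, and Eqs.~\eqref{eq:24}--\eqref{eq:25} and \eqref{eq:88}--\eqref{eq:89} lose their $\lambda$-terms.
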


 \begin{lem}\label{lem:84}{\em \cite{TS}}  Let $(\gg,[,])$ and $(\gg^*, \delta^*)$ be Leibniz algebras. Then the following conditions are equivalent:
 \begin{enumerate}[(1)]
 \item \mlabel{it:85} $(\gg, [,], \d)$ is a Leibniz bialgebra;
 \item \mlabel{it:86} $\big(\gg, \gg^*, (L^*_\gg, -L^*_\gg-R^*_\gg), (L^*_{\gg^*}, -L^*_{\gg^*}-R^*_{\gg^*})\big)$ is a matched pair of $(\gg,[,])$ and $(\gg^*, \delta^*)$.
 \end{enumerate}
 \end{lem}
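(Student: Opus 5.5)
Lemma \ref{lem:84} is quoted from \cite{TS}. I would not reprove it from scratch; instead I would reduce it to Lemma \ref{lem:79}, which is also from \cite{TS}, and to Lemma \ref{lem:56}. By Lemma \ref{lem:56}, condition \ref{it:86} is equivalent to the bracket \eqref{eq:57} on $\gg\oplus\gg^*$ being a Leibniz bracket. Here one takes $\rho_1^L=L^*_\gg$, $\rho_1^R=-L^*_\gg-R^*_\gg$, $\rho_2^L=L^*_{\gg^*}$ and $\rho_2^R=-L^*_{\gg^*}-R^*_{\gg^*}$, with $[,]_{\gg^*}=\delta^*$. The task is therefore to show that this Leibniz identity holds exactly when $\delta$ satisfies the two compatibility conditions defining a Leibniz bialgebra. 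The Leibniz coalgebra axiom for $\delta$ is already built in, since it is the dual of the Leibniz identity for $(\gg^*,\delta^*)$.

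The key steps, in order, are as follows. First, the Leibniz identity on $\gg\oplus\gg^*$ is trilinear, so it splits by the types of its three arguments. The cases $(\gg,\gg,\gg)$ and $(\gg^*,\gg^*,\gg^*)$ are automatic. Each remaining mixed case involves one or two elements of $\gg^*$. Pairing with a test vector from $\gg$ or from $\gg^*$ splits it further into a $\gg$-component and a $\gg^*$-component.

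Second, these components are exactly the standard matched pair compatibility conditions for Leibniz algebras. Each one is an identity between actions such as $L^*_\gg(x)[\xi,\eta]_{\gg^*}$ and combinations like $[L^*_\gg(x)\xi,\eta]_{\gg^*}$ and $L^*_{\gg^*}(\ldots)$. I would evaluate each condition on $y\in\gg$ (resp. $\xi\in\gg^*$), using \eqref{eq:16}, \eqref{eq:17} and $\langle[\xi,\eta]_{\gg^*},y\rangle=\langle\xi\o\eta,\delta(y)\rangle$. This turns each condition into a statement of the form $\langle\xi\o\eta,\ \text{(expression in }\delta,L,R)\rangle=0$.

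Third, I would identify the resulting tensor identities. Some of them should reproduce $\delta([x,y])=((\id\o R_y-L_y\o\id-R_y\o\id)(\id+\tau))\delta(x)+(\id\o L_x+L_x\o\id)\delta(y)$. Another should reproduce the symmetry condition $(R_x\o\id)\delta(y)=\tau((R_y\o\id)\delta(x))$. The remaining ones should follow from these two combined with the Leibniz identity and the coalgebra axiom, just as in Lemma \ref{lem:79}.

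The main obstacle is this last bookkeeping step. The matched pair conditions are redundant, several of them dualize to the same tensor identity only after applying $\tau$, and the right-action part $-L^*-R^*$ produces the $(\id+\tau)$ symmetrization. I would first handle the condition with a single $\gg^*$-argument in the middle slot. That condition should give the cocycle formula for $\delta([x,y])$ directly. I would then show that the others reduce either to it or to the $R$-symmetry condition, by subtracting the versions with swapped arguments.
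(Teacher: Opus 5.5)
The paper gives no proof of this lemma; it simply imports it from \cite{TS}. Your direct verification through Lemma~\ref{lem:56} is therefore a self-contained substitute rather than a variant of an argument in the paper, and the bookkeeping you leave as ``should'' does close.

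Set up notation as follows. Write $\d(z)=z_{(1)}\o z_{(2)}$ and $J(a,b,c)=[a,[b,c]]-[[a,b],c]-[b,[a,c]]$ on $\gg\oplus\gg^*$. Note that $\rho_2^R(\xi)z=\langle\xi,z_{(1)}\rangle z_{(2)}+\langle\xi,z_{(2)}\rangle z_{(1)}$; this is the source of the $(\id+\tau)$ factors. Pair $\gg$-components with $\eta$ and $\gg^*$-components with $y$. Then the six compatibility components behave as follows.

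\begin{enumerate}[(i)]
\item The $\gg$-component of $J(x,\xi,y)$ is exactly the formula for $\d([x,y])$, as you predicted.
\item The $\gg$-component of $J(\xi,x,y)$ minus that of $J(x,\xi,y)$ is exactly $(R_x\o\id)\d(y)=\tau((R_y\o\id)\d(x))$.
\item The $\gg$-component of $J(x,y,\xi)$ is $(\id+\tau)$ applied to the $\d([x,y])$ formula. This step uses only that $(\id+\tau)\d(x)$ is $\tau$-invariant, not the symmetry condition.
\item The $\gg^*$-components of $J(x,\xi,\eta)$ and $J(\xi,x,\eta)$ reproduce verbatim the identities coming from $J(x,\xi,y)$ and $J(\xi,x,y)$.
\item The $\gg^*$-component of $J(\xi,\eta,x)$ is the $\d([x,y])$ formula symmetrized under $x\leftrightarrow y$, corrected by two instances of the symmetry condition.
\end{enumerate}

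The remaining components (the $\gg^*$-parts of the one-$\gg^*$ types and the $\gg$-parts of the two-$\gg^*$ types) are the representation axioms. These are automatic by Lemma~\ref{lem:15}.

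Two of your expectations need adjusting. First, neither the Leibniz identity nor the coalgebra axiom enters the reduction; it is pure linear algebra. Second, Lemma~\ref{lem:79} is not used at all. The duplication in item (iv) reflects the invariance of $\mathfrak{B}_d$, but you never need to invoke it.

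Compared with the bare citation, your route makes explicit which matched-pair condition yields which bialgebra axiom. It also shows that the six compatibility conditions collapse to just two: the cocycle formula and the $R$-symmetry condition.
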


 For Reynolds Leibniz algebras, one can get
 \begin{thm}\label{thm:91}  Let $((\gg, [,]), \rr)$ be a Reynolds Leibniz algebra. Suppose that there is a Reynolds Leibniz algebra structure $(\gg^*,\delta^*,S^*)$ on its dual space. Then the quintuple $(\gg,[,],\delta,$  $\rr, S)$ is a Reynolds Leibniz bialgebra if and only if the quadruple~$\big((\gg, \rr), (\gg^*, S^*),(L^*_\gg, -L^*_\gg-R^*_\gg), (L^*_{\gg^*}, -L^*_{\gg^*}-R^*_{\gg^*})\big)$ is a matched pair of $((\gg, [,]), \rr)$ and $((\gg^*, \delta^*), S^*)$.
 \end{thm}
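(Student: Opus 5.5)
By Definition \ref{de:61}, the matched pair condition for Reynolds Leibniz algebras splits into three independent requirements:
\begin{enumerate}[(i)]
\item $\big(\gg,\gg^*,(L^*_\gg,-L^*_\gg-R^*_\gg),(L^*_{\gg^*},-L^*_{\gg^*}-R^*_{\gg^*})\big)$ is a matched pair of the underlying Leibniz algebras $(\gg,[,])$ and $(\gg^*,\delta^*)$;
\item $(\gg^*,L^*_\gg,-L^*_\gg-R^*_\gg,S^*)$ is a representation of $(\gg,\rr)$;
\item $(\gg,L^*_{\gg^*},-L^*_{\gg^*}-R^*_{\gg^*},\rr)$ is a representation of $(\gg^*,S^*)$, where $\gg$ is identified with $(\gg^*)^*$ and $\rr=(\rr^*)^*$.
\end{enumerate}
The plan is to match these one by one with items \ref{it:de:87a}--\ref{it:de:87e} of Definition \ref{de:87}. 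Items \ref{it:de:87b} and \ref{it:de:87c} are part of the standing hypotheses. Indeed, $((\gg,[,]),\rr)$ is assumed to be a Reynolds Leibniz algebra. Likewise, $(\gg^*,\delta^*,S^*)$ being a Reynolds Leibniz algebra is equivalent, by dualizing, to $((\gg,\delta),S)$ being a Reynolds Leibniz coalgebra; here one checks that Eq.(\ref{eq:2}) for $S^*$ on $\gg^*$, paired with an arbitrary $x\in\gg$, is exactly Eq.(\ref{eq:30}).

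First, Lemma \ref{lem:84} gives directly that (i) is equivalent to item \ref{it:de:87a}, namely that $(\gg,[,],\delta)$ is a Leibniz bialgebra. Second, by Corollary \ref{cor:23} (equivalently Definition \ref{de:cl}), (ii) holds if and only if Eqs.(\ref{eq:24})--(\ref{eq:25}) hold for $S$, which is item \ref{it:de:87d}.

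Third, apply Corollary \ref{cor:23} to the Reynolds Leibniz algebra $(\gg^*,\delta^*,S^*)$ with the linear map $\rr^*:\gg^*\to\gg^*$. This shows that (iii) holds if and only if $\rr^*$ is adjoint admissible to $(\gg^*,S^*)$, that is,
\[
\rr^*([\xi,\rr^*(\eta)]_{\gg^*})+[S^*(\xi),\rr^*(\eta)]_{\gg^*}=\rr^*([S^*(\xi),\eta]_{\gg^*})+\lambda\rr^*([S^*(\xi),\rr^*(\eta)]_{\gg^*}),
\]
together with its right-hand analogue.

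The only real computation, and the step I expect to be the main obstacle, is translating these two identities on $\gg^*$ into Eqs.(\ref{eq:88})--(\ref{eq:89}) on $\gg$. Write $[\xi,\eta]_{\gg^*}=\delta^*(\xi\o\eta)$, so that $\langle[\xi,\eta]_{\gg^*},x\rangle=\langle\xi\o\eta,\delta(x)\rangle$. Pair the displayed identity with an arbitrary $x\in\gg$ and move each dual map across the pairing:
\begin{itemize}
\item $\langle\rr^*([\xi,\rr^*(\eta)]),x\rangle=\langle\xi\o\eta,(\id\o\rr)\delta\rr(x)\rangle$;
\item $\langle[S^*(\xi),\rr^*(\eta)],x\rangle=\langle\xi\o\eta,(S\o\rr)\delta(x)\rangle$;
\item $\langle\rr^*([S^*(\xi),\eta]),x\rangle=\langle\xi\o\eta,(S\o\id)\delta\rr(x)\rangle$;
\item $\langle\lambda\rr^*([S^*(\xi),\rr^*(\eta)]),x\rangle=\langle\xi\o\eta,\lambda(S\o\rr)\delta\rr(x)\rangle$.
\end{itemize}
Since $\xi,\eta,x$ are arbitrary and the pairing is nondegenerate, the first identity is exactly Eq.(\ref{eq:88}). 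The second identity, with the tensor factors swapped in the same way, yields Eq.(\ref{eq:89}). This shows that (iii) is equivalent to item \ref{it:de:87e}.

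Combining the three equivalences with the standing hypotheses proves the theorem in both directions. Care is needed only with the order of tensor factors, since the bracket is not skew-symmetric, and with reading the dual representation convention of Lemma \ref{lem:15} correctly for $\gg^*$.
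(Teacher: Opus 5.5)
Your proposal is correct and follows the paper's route: split the Reynolds matched pair into the Leibniz matched pair (Lemma \ref{lem:84}, giving item \ref{it:de:87a}) and the two representation conditions, which Corollary \ref{cor:23} turns into items \ref{it:de:87d} and \ref{it:de:87e} of Definition \ref{de:87}. Your pairing computations, translating $\rr^*$-adjoint admissibility into Eqs.(\ref{eq:88})--(\ref{eq:89}) and Eq.(\ref{eq:2}) for $S^*$ into Eq.(\ref{eq:30}), are details the paper builds into the definition rather than proves, and they check out with the correct tensor-factor order.
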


 \begin{proof}
 $(\Longrightarrow)$ If $(\gg,[,],\delta, \rr,  S)$ is a Reynolds Leibniz bialgebra, then $(\gg, [,], \delta)$ is a Leibniz bialgebra, $S$ and $\rr^*$ are adjoint admissible to $((\gg, [,]), \rr)$ and $((\gg^*, \delta^*), S^*)$ respectively. According to the Lemma \ref{lem:84},  $(\gg, [,], \delta)$ is a Leibniz bialgebra means $\big(\gg,\gg^*, (L^*_\gg,-L^*_\gg-R^*_\gg), (L^*_{\gg^*}, -L^*_{\gg^*}-R^*_{\gg^*})\big)$ is a matched pair of $(\gg, [,])$ and $(\gg^*, \delta^*)$, $S$ and $\rr^*$ are adjoint admissible to $((\gg, [,]), \rr)$ and $((\gg^*, \delta^*), S^*)$ means $(\gg^*, L^*, -L^*-R^*, S^*)$ is a representation of $(\gg, \rr)$, $\big(\gg, L^*_{\gg^*}, -L^*_{\gg^*}-R^*_{\gg^*},\rr)$ is a representation of $(\gg^*, S^*)$. Hence $\big((\gg, \rr), (\gg^*, S^*), (L^*_\gg, -L^*_\gg-R^*_\gg), (L^*_{\gg^*},$ $-L^*_{\gg^*}-R^*_{\gg^*})\big)$ is a matched pair of $((\gg, [,]), \rr)$  and $((\gg^*, \delta^*),$ $S^*)$.\\
 $(\Longleftarrow)$ If $\big((\gg, \rr), (\gg^*, S^*), (L^*_\gg, -L^*_\gg-R^*_\gg), (L^*_{\gg^*}, -L^*_{\gg^*}-R^*_{\gg^*})\big)$ is a matched pair of $((\gg, [,]), \rr)$  and $((\gg^*, \delta^*), S^*)$, then $\big(\gg,\gg^*, (L^*_\gg, -L^*_\gg-R^*_\gg), (L^*_{\gg^*}, -L^*_{\gg^*}-R^*_{\gg^*})\big)$ is a matched pair of $(\gg,[,])$ and $(\gg^*, \delta^*)$ and $S$, $\rr^*$ are adjoint admissible to $((\gg,[,]),\rr)$ and $((\gg^*,\delta^*),S^*)$ respectively. Hence $(\gg,[,],\delta,\rr, S)$ is a Reynolds Leibniz bialgebra.
 \end{proof}

 Combining Theorem \ref{thm:80} and \ref{thm:91}, we have
 \begin{thm}\label{thm:92}  Let $((\gg, [,]), \rr)$ and $((\gg^*, \delta^*), S^*)$ be two Reynolds Leibniz algebras. Then the conditions are equivalent:
 \begin{enumerate}[(1)]
 \item \label{it:a1} $\big((\gg, \rr), (\gg^*, S^*),(L^*_\gg, -L^*_\gg-R^*_\gg), (L^*_{\gg^*}, -L^*_{\gg^*}-R^*_{\gg^*})\big)$ is a matched pair of $((\gg, [,]), \rr)$ and $((\gg^*, \delta^*), S^*)$;
 \item \label{it:b1} There is a Manin triple $(\gg\oplus\gg^*, \rr+S^*, \mathfrak{B}_d)$ of Reynolds Leibniz algebra associated to $((\gg, [,]), \rr)$ and $((\gg^*, \delta^*), S^*)$;
 \item \label{it:c1} $(\gg, [,], \delta, \rr,  S)$ is a Reynolds Leibniz bialgebra.
 \end{enumerate}
 \end{thm}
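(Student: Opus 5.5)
The plan is to obtain Theorem \ref{thm:92} by chaining the two equivalences already proved, with condition \ref{it:a1} serving as the hub. Throughout, the Leibniz algebra structure on $\gg^*$ is $[,]_{\gg^*}=\delta^*$, and the Reynolds operator on it is $S^*$. In this notation Theorem \ref{thm:80} and Theorem \ref{thm:91} apply verbatim.

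First, \ref{it:a1}$\Leftrightarrow$\ref{it:b1} is exactly Theorem \ref{thm:80}. For the forward direction, the Manin triple yields the matched pair of Leibniz algebras via Lemma \ref{lem:79}. Lemma \ref{lem:73} then shows that $S$ and $\rr^*$ are adjoint admissible, so the coadjoint quadruples are representations of $(\gg,\rr)$ and $(\gg^*,S^*)$ by Corollary \ref{cor:23}. For the converse, Theorem \ref{thm:62} shows that $\rr+S^*$ is a Reynolds operator on the double. I will also check that $(\gg\oplus\gg^*,\rr+S^*,\mathfrak{B}_d)$ satisfies Definition \ref{defi:69}. This requires nothing beyond the Reynolds property and the skew-symmetric quadratic structure, both of which are already in hand.

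Second, \ref{it:a1}$\Leftrightarrow$\ref{it:c1} is exactly Theorem \ref{thm:91}. Its hypothesis is that $(\gg^*,\delta^*,S^*)$ is a Reynolds Leibniz algebra. This matches items \ref{it:de:87a}--\ref{it:de:87c} of Definition \ref{de:87} once two dualizations are made. The Leibniz identity for $\delta^*$ is the Leibniz coalgebra axiom for $\delta$. The Reynolds identity \eqref{eq:2} for $S^*$ on $\delta^*$ dualizes to \eqref{eq:30}. Items \ref{it:de:87d} and \ref{it:de:87e} are precisely the adjoint-admissibility conditions, which are the representation conditions via Corollary \ref{cor:23} applied to $(\gg^*,S^*)$. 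Pairing \eqref{eq:24}--\eqref{eq:25} for $\rr^*$ on $(\gg^*,\delta^*)$ with elements of $\gg$ gives \eqref{eq:88}--\eqref{eq:89}.

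The only potential obstacle is this last translation. One must check that the dual pairing convention $\langle\delta^*(\xi\o\eta),x\rangle=\langle\xi\o\eta,\delta(x)\rangle$ turns the conditions for $\rr^*$ into \eqref{eq:88}--\eqref{eq:89}, with the tensor factors in the stated order. I will verify this by pairing both sides with $\xi\o\eta$. Since Definition \ref{de:87}\ref{it:de:87e} already asserts this equivalence, the argument reduces to citing Theorems \ref{thm:80} and \ref{thm:91} and composing the two equivalences.
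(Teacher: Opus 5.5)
Your proposal is correct and follows the paper's argument: the paper obtains Theorem \ref{thm:92} by combining Theorem \ref{thm:80} (for \ref{it:a1}$\Leftrightarrow$\ref{it:b1}) with Theorem \ref{thm:91} (for \ref{it:a1}$\Leftrightarrow$\ref{it:c1}), and your dualizations giving \eqref{eq:30} and \eqref{eq:88}--\eqref{eq:89} are right. One wording correction: the hypothesis on $(\gg^*,\delta^*,S^*)$ supplies only the coalgebra part of item \ref{it:de:87a}, while the bialgebra compatibility conditions come from the Leibniz-level matched pair via Lemma \ref{lem:84}, which Theorem \ref{thm:91} already uses.
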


 \section{Triangular Reynolds Leibniz bialgebras, admissible cLYBes and $\mathcal{O}$-operators}\label{se:4}

 In this section, we develop several constructions of Reynolds Leibniz bialgebras by employing both admissible cLYBes and $\mathcal{O}$-operators.

 \subsection{Triangular Reynolds Leibniz bialgebras}

 \begin{thm}\label{thm:93} {\em \cite{LMW}} Let $(\gg, [,])$ be a Leibniz algebra, $r\in \gg\o \gg$. Then $(\gg, [,], \delta_{r})$ where $\delta_{r}$ is defined by
 \begin{eqnarray}\label{eq:94}
 \delta(x):=\delta_{r}(x)=-r^1\o [r^2, x]+[r^2, x]\o r^1+[x, r^2]\o r^1, \forall~x\in \gg,
 \end{eqnarray}
 is a Leibniz bialgebra if and only if the following conditions hold:
 \begin{eqnarray}
 &(R_x\o R_y)(r^{\tau}-r)=0,&\label{eq:95}\\
 &(L_x\o R_y+R_y\o L_x+L_y\o L_x)(r^{\tau}-r)=0,&\label{eq:96}\\
 &\hspace{-10mm}(\id\o L_x\o \id+\id\o R_x\o \id)(r_{12}r_{23}^{\tau}+r_{13}r_{23}^{\tau}-r_{12}r_{13}^{\tau}-r_{13}^{\tau}r_{12})\nonumber&\\
 &\hspace{-33mm}-(\id\o \id \o R_x)(r_{12}r_{23}+r_{13}r_{23}-r_{12}^{\tau}r_{13}-r_{13}r_{12}^{\tau})\nonumber&\\
 &-(L_x\o \id\o \id+R_x\o \id\o \id)(r_{23}r_{13}^{\tau}+r_{12}^{\tau}r_{13}^{\tau}-r_{23}^{\tau}r_{12}^{\tau}-r_{12}^{\tau}r_{23}^{\tau})=0,\label{eq:97}&
 \end{eqnarray}
 where
 \begin{eqnarray*}
 r_{12}r_{23}=r^1\o [r^2, \bar{r}^1]\o \bar{r}^2, r_{13}r_{23}=r^1\o \bar{r}^1\o [r^2, \bar{r}^2], r_{23}r_{12}=r^1\o [\bar{r}^1, r^2]\o \bar{r}^2,\\
 r_{23}r_{13}=r^1\o \bar{r}^1\o [\bar{r}^2, r^2], r_{12}r_{13}=[r^1, \bar{r}^1]\o r^2\o \bar{r}^2, r_{13}r_{12}=[r^1, \bar{r}^1]\o \bar{r}^2 \o r^2,
 \end{eqnarray*}
 $r=r^1\o r^2, r^{\tau}=\tau\circ r=r^2\o r^1,$ and $ \bar{r}=r$.
 \end{thm}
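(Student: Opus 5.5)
The statement is a result recalled from \cite{LMW}: a coboundary-type characterization of when $\delta_r$ turns $(\gg,[,])$ into a Leibniz bialgebra. I would prove it by checking directly the three defining requirements of a Leibniz bialgebra for $\delta=\delta_r$ as given in Eq.~(\ref{eq:94}). These are: the Leibniz co-identity, the symmetry-type condition $(R_x\o \id)\delta(y)=\tau\big((R_y\o \id)\delta(x)\big)$, and the compatibility condition expressing $\delta([x,y])$ in terms of $\delta(x)$ and $\delta(y)$. I will not pass to the matched-pair picture of Lemma~\ref{lem:84}. The point is to show that each requirement reduces to one of Eqs.~(\ref{eq:95})--(\ref{eq:97}), with everything else cancelling identically by the Leibniz identity~(\ref{eq:1}).

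First, the symmetry condition. Expand $(R_x\o\id)\delta_r(y)$ and $\tau(R_y\o\id)\delta_r(x)$ using $r=r^1\o r^2$. The terms $[[r^2,y],x]\o r^1$ and $[[y,r^2],x]\o r^1$ are rewritten with~(\ref{eq:1}) so that $r^2$ sits as a left or right argument next to $x$ and $y$. After this, the difference should collapse to $(R_x\o R_y)$ applied to $r^\tau-r$, possibly together with a flip. This yields exactly~(\ref{eq:95}).

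Second, the compatibility condition. Compute $\delta_r([x,y])=-r^1\o[r^2,[x,y]]+[r^2,[x,y]]\o r^1+[[x,y],r^2]\o r^1$, and expand each nested bracket by~(\ref{eq:1}), e.g.\ $[r^2,[x,y]]=[[r^2,x],y]+[x,[r^2,y]]$ together with its right analogue. Then compute the right-hand side $\big((\id\o R_y-L_y\o\id-R_y\o\id)(\id+\tau)\big)\delta_r(x)+(\id\o L_x+L_x\o\id)\delta_r(y)$, grouping terms by where $r^1$ occurs. The terms not cancelled by~(\ref{eq:1}) should assemble into $(L_x\o R_y+R_y\o L_x+L_y\o L_x)(r^\tau-r)$, which gives~(\ref{eq:96}). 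The equivalence runs in both directions because every step is an identity apart from this residual term.

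Third, and hardest, the co-Leibniz identity $(\id\o\delta)\delta=(\delta\o\id)\delta+(\tau\o\id)(\id\o\delta)\delta$. Here one substitutes $\delta_r$ twice, using a second copy $\bar r$. This gives roughly $27$ triple-tensor terms on each side, each involving two brackets with $r^2$, $\bar r^2$ and $x$. The plan is as follows:
\begin{enumerate}[(i)]
\item Use~(\ref{eq:1}) to move $x$ to the outermost bracket in every term.
\item Sort the terms according to the tensor slot that carries $x$, namely $L_x$ or $R_x$ acting on slot $1$, $2$ or $3$.
\item Recognise the remaining $r$–$\bar r$ brackets as the products $r_{12}r_{23}$, $r_{13}r_{23}$, $r_{12}r_{13}$, etc., defined after Eq.~(\ref{eq:97}), relabelling $r\leftrightarrow\bar r$ where needed.
\end{enumerate}
The main obstacle is this bookkeeping. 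I expect it to require careful tracking of the flips $\tau$ and $\tau\o\id$ and of the renaming symmetry between $r$ and $\bar r$. Once done, the difference of the two sides should equal the left-hand side of~(\ref{eq:97}), completing the equivalence.
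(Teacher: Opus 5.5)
Your proposal is correct. The paper gives no proof of this statement of its own; it is quoted from \cite{LMW}. Your direct verification of the three axioms therefore makes the result self-contained rather than reproducing an argument in the text. It goes through as you outline: each axiom is equivalent, by itself, to one of Eqs.~(\ref{eq:95})--(\ref{eq:97}), with no need to use the other two conditions.

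All cancellations come from two consequences of (\ref{eq:1}): $[[a,b]+[b,a],c]=0$, and each $L_c$ is a derivation of $\{a,b\}:=[a,b]+[b,a]$.

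In the symmetry check, the two terms you single out need no rearranging. They cancel outright by the first fact, leaving $[r^1,x]\o[r^2,y]=[r^2,x]\o[r^1,y]$.

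In the compatibility check, $(\id+\tau)\delta_r(x)=[x,r^2]\o r^1+r^1\o[x,r^2]$ by direct cancellation. After that, only the six mixed terms making up (\ref{eq:96}) survive.

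For the co-Leibniz identity, writing $\delta_r(x)=-r^1\o[r^2,x]+\{r^2,x\}\o r^1$ reduces the three composites to $4+4+4$ terms.

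One correction to your step (i): $x$ cannot be moved to the outermost bracket term by term, since a term like $[\bar r^2,[r^2,x]]$ has no such form. Instead, the nested terms cancel in pairs after relabelling $r\leftrightarrow\bar r$. For example, $r^1\o\bar r^1\o[\bar r^2,[r^2,x]]-\bar r^1\o r^1\o[\bar r^2,[r^2,x]]=-(\id\o\id\o R_x)(r_{13}r_{23})$, and the same happens in slots $1$ and $2$ via the derivation property. What remains is exactly the left-hand side of (\ref{eq:97}).
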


 \begin{defi} A Leibniz bialgebra $(\gg, [,], \delta)$ is called {\bf coboundary} if there exists an element $r\in \gg\o \gg$ such that Eq.(\ref{eq:94}) holds.
 \end{defi}

 \begin{cor}\label{cor:98}{\em \cite{{LMW},{TS}}} Let $(\gg,[,])$ be a Leibniz algebra. If $r\in \gg\o \gg$ is the symmetric solution of the following classical Leibniz Yang-Baxter equation (cLYBe) in $(\gg,[,])$,
 \begin{eqnarray}\label{eq:99}
 &r_{12}r_{23}+r_{13}r_{23}=r_{12}^{\tau}r_{13}+r_{13}r_{12}^{\tau},&\label{eq:100}
 \end{eqnarray}
 then $(\gg,[,], \delta_r)$ is a Leibniz bialgebra, where $\delta_{r}$ is defined by Eq.(\ref{eq:94}). In this case, we call this Leibniz bialgebra \textbf{triangular}, denoted by $(\gg, [,], \delta_r)$.
 \end{cor}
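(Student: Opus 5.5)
The plan is to derive Corollary \ref{cor:98} directly from Theorem \ref{thm:93} by checking its three conditions, Eqs.~\eqref{eq:95}--\eqref{eq:97}, under the two hypotheses: $r$ is symmetric and $r$ satisfies Eq.~\eqref{eq:100}.

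First, symmetry means $r^{\tau}=r$, so $r^{\tau}-r=0$. Hence Eqs.~\eqref{eq:95} and \eqref{eq:96} hold at once, since each applies a linear operator to $r^{\tau}-r$.

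Next, for Eq.~\eqref{eq:97}, I would use symmetry to rewrite every $\tau$-decorated term as an undecorated one. Symmetry of $r$ gives $r_{ij}^{\tau}=r_{ij}$ as tensors in the relevant legs, because the flipped element is literally the same element $\sum r^2\o r^1=\sum r^1\o r^2$. The three brackets in Eq.~\eqref{eq:97} then become:
\begin{itemize}
\item first: $r_{12}r_{23}+r_{13}r_{23}-r_{12}r_{13}-r_{13}r_{12}$;
\item second: $r_{12}r_{23}+r_{13}r_{23}-r_{12}r_{13}-r_{13}r_{12}$;
\item third: $r_{23}r_{13}+r_{12}r_{13}-r_{23}r_{12}-r_{12}r_{23}$.
\end{itemize}
The cLYBe \eqref{eq:100}, rewritten with $r^{\tau}=r$, reads $r_{12}r_{23}+r_{13}r_{23}=r_{12}r_{13}+r_{13}r_{12}$. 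So the first two brackets vanish.

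The main obstacle is the third bracket, $r_{23}r_{13}+r_{12}r_{13}-r_{23}r_{12}-r_{12}r_{23}$. It is not literally the cLYBe expression, so I would relate it to that expression by a permutation of tensor legs.
\begin{itemize}
\item Write out each term explicitly, for example $r_{23}r_{13}=r^1\o\bar r^1\o[\bar r^2,r^2]$ and $r_{12}r_{13}=[r^1,\bar r^1]\o r^2\o\bar r^2$.
\item Use symmetry to swap the roles of $r^1,r^2$ (and of $\bar r^1,\bar r^2$), and to relabel $r\leftrightarrow\bar r$.
\item Check that the result equals $\sigma$ applied to the cLYBe expression $r_{12}r_{23}+r_{13}r_{23}-r_{12}^{\tau}r_{13}-r_{13}r_{12}^{\tau}$, for a suitable permutation $\sigma$ of the three factors (I expect the one sending leg $1$ to leg $3$).
\end{itemize}
For instance, $r_{23}r_{13}=\bar r^1\o r^1\o[r^2,\bar r^2]$ after relabeling, which is $(\tau\o\id)(r_{13}r_{23})$. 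Similarly, $r_{12}r_{13}$ should match $\tau_{13}$ of $r_{12}^{\tau}r_{13}$-type terms. If a single permutation does not match all four terms, I would instead express each term as a permuted copy of the four cLYBe terms and verify that the signed sum cancels. Once all three brackets vanish, Eq.~\eqref{eq:97} holds and Theorem \ref{thm:93} yields the Leibniz bialgebra $(\gg,[,],\delta_r)$.
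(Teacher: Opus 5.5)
Your route is the one the paper intends (it simply invokes Theorem~\ref{thm:93}): symmetry kills Eqs.~\eqref{eq:95}--\eqref{eq:96}, and the first two brackets of Eq.~\eqref{eq:97} are literally the cLYBe. For the third bracket a single cyclic leg permutation $P(x_1\o x_2\o x_3)=x_3\o x_1\o x_2$ works: for symmetric $r$ it sends $r_{12}r_{23},\ r_{13}r_{23},\ r_{12}r_{13},\ r_{13}r_{12}$ to $r_{23}r_{13},\ r_{12}r_{13},\ r_{23}r_{12},\ r_{12}r_{23}$ respectively, so the third bracket is $P$ applied to the cLYBe expression and vanishes. Two side remarks: your guess that $r_{12}r_{13}$ is a $\tau_{13}$-image is off, since $\tau_{13}(r_{12}r_{13})=r_{23}r_{13}$; and matching terms through different permutations, as in your fallback, would not be a valid way to invoke the cLYBe.
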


 By Theorem \ref{thm:93}, under the assumption of $S$-adjoint admissible Reynolds Leibniz algebra $(\gg, \rr)$, if $r\in \gg\o \gg$ is a symmetric solution of the cLYBe in $(\gg,[,])$, in addition, $((\gg, \delta), S)$ is a Reynolds Leibniz coalgebra and Eqs.(\ref{eq:88}) and (\ref{eq:89}) hold, then $\big((\gg, \rr), \delta, S\big)$ is a Reynolds Leibniz bialgebra.

 The following result establishes a fundamental link between solutions of the cLYBe and Reynolds operators on a Leibniz algebra and a Leibniz coalgebra.

 \begin{thm}\label{thm:102}  Let $(\gg, \rr)$ be an $S$-adjoint admissible Reynolds Leibniz algebra and define a linear map $\delta:\gg\lr \gg\o \gg$ by Eq.(\ref{eq:94}), then we have
 \begin{enumerate}[(1)]
 \item \label{it:103} Eq.(\ref{eq:30}) holds if and only if, for all $x\in \gg$,
 \begin{eqnarray}\label{eq:104}
 &&(R(S(x))\o \id+L(S(x))\o \id-(S\circ R(x))\o \id-(S\circ L(x))\o \id-\lambda (S\circ R)(S(x))\nonumber\\
 &&\o \id-\lambda (S\circ L(S(x)))\o \id)(\rr\o \id-\id \o S)(r^{\tau})\nonumber+(\id\o R(S(x))\\
 &&-\lambda\id\o (S\circ R(S(x)))-\id\o (S\circ R(x)))(S\o \id-\id\o \rr)(r)=0.
 \end{eqnarray}

 \item \label{it:105} Eq.(\ref{eq:88}) holds if and only if, for all $x\in \gg$,
 \begin{eqnarray}\label{eq:106}
 &&(\id \o R(\rr(x))-\id\o (\rr\circ R(x)) +\lambda \id \o (\rr\circ  R(\rr(x))))(S\o \id-\id \o \rr)(r)\nonumber\\
 &&+(\lambda (S\circ R(\rr(x)))\o \id+\lambda (S\circ L(\rr(x))) \o \id-(S\circ R(x))\o \id-R(\rr(x))\o \id\nonumber\\
 &&-L(\rr(x)) \o \id-(S\circ L(x))\o \id)(S\o \id-\id \o \rr)(r^{\tau})=0.
 \end{eqnarray}

 \item \label{it:107} Eq.(\ref{eq:89}) holds if and only if, for all $x\in \gg$,
 \begin{eqnarray}\label{eq:108}
 &&(\id\o R(\rr(x))+\id\o (S\circ R(x))-\lambda \id\o (S\circ R(\rr(x))))(\id \o S-\rr\o \id)(r)\nonumber\\
 &&+((\rr\circ L(x))\o \id+(\rr\circ R(x))\o \id-R(\rr(x))\o \id-L(\rr(x))\o \id\nonumber\\
 &&-\lambda (\rr\circ R(\rr(x)))\o \id-\lambda (\rr\circ L(\rr(x)))\o \id)(\id \o S-\rr\o \id)(r^{\tau})=0.
 \end{eqnarray}
 \end{enumerate}
 \end{thm}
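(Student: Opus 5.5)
Each item is a direct computation: substitute the explicit $\delta=\delta_r$ of Eq.(\ref{eq:94}) into the relevant identity and regroup. Write $r=r^1\o r^2$. The basic formulas are
\[
(A\o B)\delta(y)=-A(r^1)\o B([r^2,y])+A([r^2,y])\o B(r^1)+A([y,r^2])\o B(r^1)
\]
for linear maps $A,B$ and any $y$ (here $y$ will be $x$, $S(x)$ or $\rr(x)$). Each term has one tensor leg equal to $r^1$ (possibly acted on by $\rr$ or $S$), while the other leg is a bracket of $r^2$ with $x$, $\rr(x)$ or $S(x)$. In the $r^\tau$ language, the terms with $r^1$ in the second leg are maps applied to $r^\tau$, and the terms with $r^1$ in the first leg are maps applied to $r$.

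For item (\ref{it:103}), expand the four pieces $(S\o S)\delta(x)$, $\lambda(S\o S)\delta(S(x))$, $(S\o\id)\delta(S(x))$ and $(\id\o S)\delta(S(x))$ into these three types of terms. Collect the terms of the form $(\cdots)\o r^1$ or $(\cdots)\o S(r^1)$ into one operator of the shape $\Phi\o\id$ applied to $r^\tau$. Collect the terms $r^1\o(\cdots)$ and $S(r^1)\o(\cdots)$ into $\id\o\Psi$ applied to $r$. Inside these groups the bracket legs still carry $S$ applied to $[r^2,x]$, $[r^2,S(x)]$, and so on.

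This is where $S$-adjoint admissibility enters. Eqs.(\ref{eq:24}) and (\ref{eq:25}) let us trade expressions like $S([r^2,S(x)])$ and $\lambda S([\rr(r^2),S(x)])$ against $[\rr(r^2),S(x)]$ and $S([\rr(r^2),x])$. The same goes for their right-handed versions, with $r^2$ in the role of $x$ or $y$. After these substitutions every term should contain either $\rr(r^2)$ against plain $r^1$, or $r^2$ against $S(r^1)$. This is exactly what produces the factors $(\rr\o\id-\id\o S)(r^\tau)$ and $(S\o\id-\id\o\rr)(r)$ in Eq.(\ref{eq:104}). Since the whole argument is a chain of equalities, and the substitutions are equalities by hypothesis, we get equivalence rather than only implication.

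Items (\ref{it:105}) and (\ref{it:107}) go the same way. Expand Eqs.(\ref{eq:88}) and (\ref{eq:89}) with $\delta(\rr(x))$ and $\delta(x)$, and again sort terms by which leg carries $r^1$. The Reynolds identity Eq.(\ref{eq:2}) with one argument equal to $r^2$ rewrites $\rr([r^2,\rr(x)])+\rr([\rr(r^2),x])-\lambda\rr([\rr(r^2),\rr(x)])$ as $[\rr(r^2),\rr(x)]$. Admissibility then handles the legs where $S$ meets a bracket. Regrouping gives operators applied to $(S\o\id-\id\o\rr)(r)$ and $(S\o\id-\id\o\rr)(r^\tau)$ (respectively $(\id\o S-\rr\o\id)$). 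Item (\ref{it:107}) should be the mirror image of item (\ref{it:105}) under the flip $\tau$, so I would derive it by symmetry and check only the $\lambda$-terms.

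The main obstacle is bookkeeping. We must pick the right form of the admissibility identities, applied with $r^2$ in which slot, so that the leftover terms cancel exactly into the stated combinations. We must also track the signs from the $-r^1\o[r^2,x]$ term in $\delta_r$ and from the $L$ versus $R$ pieces, which arise because $\delta$ involves both $[r^2,x]$ and $[x,r^2]$. I would first do the $\lambda=0$ case to fix the grouping, then insert the $\lambda$-terms.
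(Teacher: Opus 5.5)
Your method is the paper's: expand $\delta_r$ via Eq.~(\ref{eq:94}), rewrite each term where $S$ or $\rr$ hits a bracket containing $r^2$ using Eqs.~(\ref{eq:2}), (\ref{eq:24}), (\ref{eq:25}), and sort the terms by which leg carries $r^1$. For items (1) and (2) this works exactly as you describe, and you get equivalence because every substitution is an identity.

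The step that fails is obtaining item (3) from item (2) ``by symmetry under the flip $\tau$''.
\begin{itemize}
\item Applying $\tau$ to Eq.~(\ref{eq:88}) gives Eq.~(\ref{eq:89}) for the coproduct $\tau\delta_r$, not for $\delta_r$.
\item $\delta_r$ is not flip-symmetric. The terms with $r^1$ in the second leg involve both $[r^2,x]$ and $[x,r^2]$, while the term with $r^1$ in the first leg involves only $[r^2,x]$.
\item Consequently $\tau$ applied to Eq.~(\ref{eq:106}) is not Eq.~(\ref{eq:108}). It would put the $L+R$ operator, built from $S\circ L(x)$ and $S\circ R(x)$, on the second leg acting on $(\id\o S-\rr\o\id)(r)$.
\item In Eq.~(\ref{eq:108}) the $L+R$ operator instead sits on the first leg, is built from $\rr\circ L(x)$ and $\rr\circ R(x)$, and acts on $(\id\o S-\rr\o\id)(r^{\tau})$. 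The $R$-only operator stays on the second leg, just as in item (2).
\end{itemize}

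The use of the identities is also redistributed between the two items.
\begin{itemize}
\item In item (2), the first-leg group needs Eq.~(\ref{eq:2}) once, for $r^1\o\rr([r^2,\rr(x)])$. The second-leg group needs Eqs.~(\ref{eq:24}) and (\ref{eq:25}) once each, for $S([\rr(x),r^2])\o r^1$ and $S([r^2,\rr(x)])\o r^1$.
\item In item (3), the first-leg group needs only Eq.~(\ref{eq:25}), for $r^1\o S([r^2,\rr(x)])$. The second-leg group needs Eq.~(\ref{eq:2}) twice, for $\rr([r^2,\rr(x)])\o r^1$ and $\rr([\rr(x),r^2])\o r^1$.
\end{itemize}
So item (3) has to be expanded directly, as the paper does. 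The computation is no longer than the one for item (2).
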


 \begin{proof} \ref{it:103} For all $x\in \gg$, by Eq.(\ref{eq:94}) we have
 \begin{eqnarray*}
 (S\o S)\delta(x)\hspace{-3mm}&=&\hspace{-3mm}-S(r^1)\o S([r^2, x])+S([r^2, x])\o S(r^1)+S([x, r^2])\o S(r^1),\\
 \lambda (S\o S)\delta S(x)\hspace{-3mm}&=&\hspace{-3mm}-\lambda S(r^1)\o S([r^2, S(x)])+\lambda S([r^2, S(x)])\o S(r^1)\\
 &&+\lambda S([S(x), r^2])\o S(r^1),\\
 (S\o \id)\delta (S(x))\hspace{-3mm}&=&\hspace{-3mm}-S(r^1)\o [r^2, S(x)]+S([r^2, S(x)])\o r^1+S([S(x), r^2])\o r^1\\
 &\stackrel {(\ref{eq:24})(\ref{eq:25})}{=}&-S(r^1)\o [r^2, S(x)]+S([\rr(r^2), x])\o r^1+\lambda S([\rr(r^2), S(x)])\o r^1\\
 &&-[\rr(r^2), S(x)]\o r^1+S([x, \rr(r^2)])\o r^1+\lambda S([S(x), \rr(r^2)])\o r^1\\
 &&-[S(x),\rr(r^2)]\o r^1,\\
 (\id\o S)\delta (S(x))\hspace{-3mm}&=&\hspace{-3mm}-r^1\o S([r^2, S(x)])+[r^2, S(x)]\o S(r^1)+[S(x), r^2]\o S(r^1)\\
 &\stackrel {(\ref{eq:24})}{=}&-r^1\o S([\rr(r^2), x])-r^1\o \lambda S([\rr(r^2), S(x)])+r^1\o [\rr(r^2), S(x)]\\
 &&+[r^2, S(x)]\o S(r^1)+[S(x), r^2]\o S(r^1),
 \end{eqnarray*}
 then Eq.(\ref{eq:30}) $\Longleftrightarrow$  Eq.(\ref{eq:104}).

 \ref{it:105} For all $x\in \gg$, by Eq.(\ref{eq:94}) we have
 \begin{eqnarray*}
 (\id\o \rr)\delta(\rr(x))\hspace{-3mm}&=&\hspace{-3mm}-r^1\o \rr([r^2, \rr(x)])+[r^2, \rr (x)]\o \rr(r^1)+[\rr(x), r^2]\o \rr(r^1)\\
 &\stackrel {(\ref{eq:2})}{=}&-r^1\o [\rr(r^2), \rr(x)]+r^1\o \rr([\rr(r^2),x])-\lambda r^1\o \rr([\rr(r^2), \rr(x)])\\
 &&+[r^2, \rr(x)]\o \rr(r^1)+[\rr(x), r^2]\o \rr(r^1),\\
 (S\o \rr)\delta(x)\hspace{-3mm}&=&\hspace{-3mm}-S(r^1)\o \rr([r^2, x])+S([r^2, x])\o \rr (r^1)+S([x, r^2])\o \rr(r^1),\\
 (S\o \id)\delta(\rr(x))\hspace{-3mm}&=&\hspace{-3mm}-S(r^1)\o [r^2, \rr(x)]+S([r^2, \rr (x)])\o r^1+S([\rr(x), r^2])\o r^1\\
 &\stackrel {(\ref{eq:24})(\ref{eq:25})}{=}&-S(r^1)\o ([r^2, \rr(x)])+S([S(r^2),x])\o r^1+[S(r^2),\rr(x)]\o r^1\\
 &&-\lambda S([S(r^2), \rr(x)]\o r^1+S([x, S(r^2)])\o r^1+[\rr(x),S(r^2)]\o r^1\\
 &&-\lambda S([\rr(x),S(r^2)])\o r^1,\\
 \lambda (S\o \rr)\delta(\rr(x))\hspace{-3mm}&=&\hspace{-3mm}-\lambda S(r^1)\o \rr([r^2, \rr(x)])+\lambda S([r^2, \rr(x)])\o \rr(r^1)\\
 &&+\lambda S([\rr(x), r^2])\o \rr(r^1),
 \end{eqnarray*}
 then Eq.(\ref{eq:88}) $\Longleftrightarrow$  Eq.(\ref{eq:106}).

 \ref{it:107} For all $x\in \gg$, by Eq.(\ref{eq:94}) we have
 \begin{eqnarray*}
 (\rr\o \id)\delta(\rr(x))\hspace{-3mm}&=&\hspace{-3mm}-\rr(r^1)\o [r^2, \rr(x)]+\rr ([r^2, \rr(x)])\o r^1+\rr([\rr(x), r^2])\o r^1\\
 &\stackrel {(\ref{eq:2})}{=}&-\rr(r^1)\o [r^2, \rr(x)]+[\rr(r^2), \rr(x)]\o r^1-\rr([\rr (r^2), x])\o r^1\\
 &&+\lambda \rr([\rr(r^2), \rr(x)])\o r^1+[\rr(x),\rr(r^2)]\o r^1-\rr([x, \rr(r^2)])\o r^1\\
 &&+\lambda \rr([\rr(x), \rr(r^2)])\o r^1,\\
 (\rr\o S)\delta(x)\hspace{-3mm}&=&\hspace{-3mm}-\rr(r^1)\o S([r^2, x])+\rr([r^2, x])\o S(r^1)+\rr([x, r^2])\o S(r^1),\\
 (\id \o S)\delta(\rr(x))\hspace{-3mm}&=&\hspace{-3mm}-r^1\o S([r^2, \rr(x)])+[r^2,\rr (x)]\o S(r^1)+[\rr(x),r^2]\o S(r^1)\\
 &\stackrel {(\ref{eq:25})}{=}&-r^1\o S([S(r^2),x])-r^1\o [S(r^2),\rr(x)]+r^1\o \lambda S([S(r^2),\rr(x)])\\
 &&+[r^2,\rr(x)]\o S(r^1)+[\rr(x),r^2]\o S(r^1),\\
 \lambda (\rr\o S)\delta(\rr(x))\hspace{-3mm}&=&\hspace{-3mm}-\lambda \rr(r^1)\o S([r^2, \rr(x)])+\lambda \rr([r^2, \rr(x)])\o S(r^1)\\
 &&+\lambda \rr([\rr(x), r^2])\o S(r^1),
 \end{eqnarray*}
 then Eq.(\ref{eq:89}) $\Longleftrightarrow$ Eq.(\ref{eq:108}). We finish the proof.
 \end{proof}

 \begin{lem}\label{lem:112}
 \begin{enumerate}[(1)]
 \item \label{it:113} If $(S\o \id-\id \o \rr)(r)=0$, then $(\id\o S-\rr\o \id)(r^{\tau})=0$;
 \end{enumerate}
 \begin{enumerate}[(2)]
 \item \label{it:114} If $(\id \o S-\rr\o \id)(r)=0$, then $(S \o \id-\id\o \rr )(r^{\tau})=0$;
 \end{enumerate}
 \begin{enumerate}[(3)]
 \item \mlabel{it:115} If $r=r^{\tau}$, then $(S\o \id-\id\o \rr)(r)=0$ $\Longleftrightarrow$ $(\rr\o \id-\id\o S)(r)=0$.
 \end{enumerate}
 \end{lem}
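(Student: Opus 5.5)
The whole lemma follows from one identity: applying the flip $\tau$ to a tensor interchanges the two tensor legs of any operator acting on it. Precisely, for linear maps $f,g:\gg\lr\gg$ and $t\in\gg\o\gg$,
\begin{eqnarray*}
\tau\big((f\o g)(t)\big)=(g\o f)(\tau(t)).
\end{eqnarray*}
To verify it, write $t=t^1\o t^2$. The left-hand side is $\tau(f(t^1)\o g(t^2))=g(t^2)\o f(t^1)$, and this equals $(g\o f)(t^2\o t^1)$. I will apply this with $f,g\in\{\id,S,\rr\}$ and $t=r$, together with the facts that $\tau$ is an involutive linear isomorphism and $r^{\tau}=\tau(r)$.

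For Item (1), apply $\tau$ to the hypothesis $(S\o\id-\id\o\rr)(r)=0$. By the identity this becomes $(\id\o S)(r^{\tau})-(\rr\o\id)(r^{\tau})=0$. That is exactly $(\id\o S-\rr\o\id)(r^{\tau})=0$.

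Item (2) works the same way. Applying $\tau$ to $(\id\o S-\rr\o\id)(r)=0$ gives $(S\o\id)(r^{\tau})-(\id\o\rr)(r^{\tau})=0$, which is the claim.

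For Item (3), suppose $r=r^{\tau}$. Item (1) turns $(S\o\id-\id\o\rr)(r)=0$ into $(\id\o S-\rr\o\id)(r^{\tau})=0$. Replacing $r^{\tau}$ by $r$ and multiplying by $-1$ gives $(\rr\o\id-\id\o S)(r)=0$. For the converse, assume $(\rr\o\id-\id\o S)(r)=0$, which is $(\id\o S-\rr\o\id)(r)=0$ up to sign. Item (2) then gives $(S\o\id-\id\o\rr)(r^{\tau})=0$, and substituting $r^{\tau}=r$ finishes the argument. Equivalently, since $\tau$ is an involution, the first condition is simply the image of the second under $\tau$.

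There is no real obstacle here. The only point needing care is the sign and leg bookkeeping when identifying $-(\id\o S-\rr\o\id)$ with $\rr\o\id-\id\o S$.
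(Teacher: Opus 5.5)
Your proof is correct and is the same routine check the paper leaves as ``straightforward''. Applying the flip and using $\tau\circ(f\o g)=(g\o f)\circ\tau$ gives Items (1) and (2) directly. Item (3) then follows by substituting $r^{\tau}=r$ and flipping the overall sign.
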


 \begin{proof}
 Straightforward.
 \end{proof}

 \begin{thm}\label{thm:116}  Let $(\gg, \rr)$ be an $S$-adjoint admissible Reynolds Leibniz algebra. Then the linear map $\delta$ defined by Eq.(\ref{eq:94}) induces a $\rr ^*$-adjoint admissible Reynolds Leibniz algebra $((\gg^*, \delta^*),S^*)$ such that $\big((\gg, \rr), \delta, S\big)$ is a Reynolds Leibniz bialgebra if and only if Eqs.(\ref{eq:95}), (\ref{eq:96}), (\ref{eq:97}), (\ref{eq:104}), (\ref{eq:106}) and (\ref{eq:108}) hold.
 \end{thm}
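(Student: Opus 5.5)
The plan is to reduce the statement to Definition \ref{de:87}, verifying its five items one at a time with the results already established. Item \ref{it:de:87b} and item \ref{it:de:87d} hold by hypothesis, since $(\gg,\rr)$ is an $S$-adjoint admissible Reynolds Leibniz algebra. So $\big((\gg,\rr),\delta,S\big)$ is a Reynolds Leibniz bialgebra exactly when three further conditions hold. First, $(\gg,[,],\delta)$ must be a Leibniz bialgebra. Second, $((\gg,\delta),S)$ must be a Reynolds Leibniz coalgebra. Third, $\rr^*$ must be adjoint admissible to $(\gg^*,\delta^*,S^*)$, i.e. Eqs.~(\ref{eq:88}) and (\ref{eq:89}) hold.

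For the first condition I would invoke Theorem \ref{thm:93} directly. With $\delta=\delta_r$ given by Eq.~(\ref{eq:94}), $(\gg,[,],\delta_r)$ is a Leibniz bialgebra if and only if Eqs.~(\ref{eq:95})--(\ref{eq:97}) hold. In particular $(\gg,\delta)$ is then a Leibniz coalgebra, so $(\gg^*,\delta^*)$ is a Leibniz algebra.

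For the remaining conditions I would apply Theorem \ref{thm:102}, which uses only the $S$-adjoint admissibility of $(\gg,\rr)$. By item \ref{it:103}, the Reynolds coalgebra identity Eq.~(\ref{eq:30}) is equivalent to Eq.~(\ref{eq:104}). Combined with the coalgebra part, this says $((\gg,\delta),S)$ is a Reynolds Leibniz coalgebra. Dually, $S^*$ is then a Reynolds operator of weight $\lambda$ on $(\gg^*,\delta^*)$: pairing Eq.~(\ref{eq:30}) with $\xi\otimes\eta$ gives Eq.~(\ref{eq:2}) for $S^*$ with bracket $\delta^*$. By items \ref{it:105} and \ref{it:107}, Eqs.~(\ref{eq:88}) and (\ref{eq:89}) are equivalent to Eqs.~(\ref{eq:106}) and (\ref{eq:108}).

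To justify that Eqs.~(\ref{eq:88}) and (\ref{eq:89}) mean exactly that $\rr^*$ is adjoint admissible to $((\gg^*,\delta^*),S^*)$, I would dualize Corollary \ref{cor:23}. Write Eqs.~(\ref{eq:24}) and (\ref{eq:25}) for the Reynolds Leibniz algebra $(\gg^*,S^*)$ with $S$ replaced by $\rr^*$, and pair with $x\in\gg$, using $\langle[\xi,\eta]_{\gg^*},x\rangle=\langle\xi\otimes\eta,\delta(x)\rangle$. This yields Eqs.~(\ref{eq:88}) and (\ref{eq:89}), which is precisely the content of item \ref{it:de:87e}.

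Assembling these pieces proves both directions at once, since every step is an equivalence. Conversely, if the bialgebra exists, items \ref{it:de:87a}, \ref{it:de:87c} and \ref{it:de:87e} force Eqs.~(\ref{eq:95})--(\ref{eq:97}), (\ref{eq:104}), (\ref{eq:106}) and (\ref{eq:108}) through the same equivalences.

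The main obstacle I expect is bookkeeping rather than a new idea. One must check that the dualization of Corollary \ref{cor:23} produces exactly Eqs.~(\ref{eq:88})--(\ref{eq:89}), with the correct tensor-factor positions and signs coming from $L^*$ and $-L^*-R^*$. One must also check that Theorem \ref{thm:102} holds for general $r$, not only symmetric $r$. I would verify the first point on a simple tensor $\xi\otimes\eta$.
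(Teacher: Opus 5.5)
Your proposal is correct and matches the paper's argument. The paper gives no written proof of Theorem \ref{thm:116}; it treats it as the direct combination of Definition \ref{de:87}, Theorem \ref{thm:93} and Theorem \ref{thm:102}, just as you do. The Reynolds-algebra and $S$-adjoint-admissibility items are hypotheses. The Leibniz bialgebra item is equivalent to Eqs.~(\ref{eq:95})--(\ref{eq:97}). Eq.~(\ref{eq:30}) and Eqs.~(\ref{eq:88})--(\ref{eq:89}) are equivalent to Eqs.~(\ref{eq:104}), (\ref{eq:106}) and (\ref{eq:108}).

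Neither of your bookkeeping worries causes trouble. Dualizing Corollary \ref{cor:23} does give exactly Eqs.~(\ref{eq:88})--(\ref{eq:89}), which is the identification written into Definition \ref{de:87}. Theorem \ref{thm:102} is stated and proved for arbitrary $r$, using only Eqs.~(\ref{eq:2}), (\ref{eq:24}) and (\ref{eq:25}).
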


 \subsection{Admissible cLYBe in a Reynolds Leibniz algebra}

 \begin{cor}\label{cor:117}   Let $(\gg, \rr)$ be an $S$-adjoint admissible Reynolds Leibniz algebra and $r\in \gg\o \gg$.  If Eqs.(\ref{eq:95}),(\ref{eq:96}), (\ref{eq:97}) and the following equations hold,
 \begin{eqnarray}
 (S\o \id-\id\o \rr)(r)=0,\label{eq:118}\\
 (\id\o S-\rr\o \id)(r)=0,\label{eq:119}
 \end{eqnarray}
 then $\big((\gg, \rr), \delta, S\big)$ is a Reynolds Leibniz bialgebra, where $\delta$ is defined by Eq.(\ref{eq:94}).
 \end{cor}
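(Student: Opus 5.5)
This is a direct application of Theorem \ref{thm:116}. Since $(\gg,\rr)$ is $S$-adjoint admissible and Eqs.~(\ref{eq:95})--(\ref{eq:97}) are assumed, it remains to verify Eqs.~(\ref{eq:104}), (\ref{eq:106}) and (\ref{eq:108}). Each of these is a sum of two terms. In every term, some operator acts on one of the four tensors
\[
(S\o \id-\id\o\rr)(r),\quad (S\o\id-\id\o\rr)(r^{\tau}),\quad (\id\o S-\rr\o\id)(r),\quad (\id\o S-\rr\o\id)(r^{\tau}),
\]
or on $(\rr\o\id-\id\o S)(r^{\tau})$, which is the negative of the last one. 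So the plan is to show that all of these tensors vanish under the hypotheses (\ref{eq:118}) and (\ref{eq:119}).

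First, (\ref{eq:118}) and (\ref{eq:119}) say directly that $(S\o\id-\id\o\rr)(r)=0$ and $(\id\o S-\rr\o\id)(r)=0$. Next, Lemma~\ref{lem:112}~\ref{it:113} applied to (\ref{eq:118}) gives $(\id\o S-\rr\o\id)(r^{\tau})=0$, and hence also $(\rr\o\id-\id\o S)(r^{\tau})=0$. Likewise, Lemma~\ref{lem:112}~\ref{it:114} applied to (\ref{eq:119}) gives $(S\o\id-\id\o\rr)(r^{\tau})=0$. Each of these follows simply by applying $\tau$, using $\tau(f\o g)=(g\o f)\tau$.

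Finally, substitute these vanishings into Eqs.~(\ref{eq:104}), (\ref{eq:106}) and (\ref{eq:108}). In (\ref{eq:104}), the first term acts on $(\rr\o\id-\id\o S)(r^{\tau})=0$ and the second on $(S\o\id-\id\o\rr)(r)=0$. In (\ref{eq:106}), the two terms act on $(S\o\id-\id\o\rr)(r)$ and $(S\o\id-\id\o\rr)(r^{\tau})$, both zero. In (\ref{eq:108}), they act on $(\id\o S-\rr\o\id)(r)$ and $(\id\o S-\rr\o\id)(r^{\tau})$, both zero. Since the operators in front are linear, all three equations hold identically. Theorem~\ref{thm:116} then gives that $((\gg,\rr),\delta,S)$ is a Reynolds Leibniz bialgebra.

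There is no real obstacle here. The only care needed is to match each bracketed tensor in (\ref{eq:104})--(\ref{eq:108}) with the correct vanishing statement, in particular to note the sign flip for $(\rr\o\id-\id\o S)(r^{\tau})$.
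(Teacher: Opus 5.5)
Your proposal is correct and is essentially the argument the paper intends. The paper gives no written proof, but places the corollary directly after Lemma~\ref{lem:112} and Theorem~\ref{thm:116}: Eqs.~(\ref{eq:118})--(\ref{eq:119}) together with their $\tau$-flips from Lemma~\ref{lem:112} kill every bracketed tensor in Eqs.~(\ref{eq:104}), (\ref{eq:106}) and (\ref{eq:108}), and Theorem~\ref{thm:116} then gives the conclusion; you match each tensor correctly, including the sign flip for $(\rr\o\id-\id\o S)(r^{\tau})$.
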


 \begin{defi}\label{defi:122}  Let $(\gg, \rr)$ be a Reynolds Leibniz algebra, $r\in \gg\o\gg$, $S:\gg\lr \gg$ a linear map. Then Eq.(\ref{eq:100}) together with Eqs.(\ref{eq:118}) and (\ref{eq:119}) is called an \textbf{$S$-admissible classical Leibniz Yang-Baxter equation in $(\gg, \rr)$} or simply an \textbf{$S$-admissible cLYBe in $(\gg, \rr)$}.
 \end{defi}

 \begin{rmk}\label{rmk:123} Eq.(\ref{eq:100}) is simply the cLYBe in Leibniz algebra (see\cite{LMW}), as an analogue of classical Yang-Baxter equation in a Lie algebra. Also by Lemma \ref{lem:112} , if $r$ is symmetric (that is, $r^{\tau}=r$), then Eq.(\ref{eq:118}) holds if and only if Eq.(\ref{eq:119}) holds.
 \end{rmk}

 Then by Corollary \ref{cor:117}, we have

 \begin{cor} \label{cor:124}  Let $(\gg, \rr)$ be an $S$-adjoint admissible Reynolds Leibniz algebra, $r\in \gg\o \gg$ be a symmetric solution of the $S$-admissible cLYBe in $(\gg, \rr)$. Then $\big((\gg, \rr), \delta_r, S\big)$ is a Reynolds Leibniz bialgebra, where $\delta_r$ is defined by Eq.(\ref{eq:94}). In this case we called this Reynolds Leibniz bialgebra \textbf{triangular} and denoted by $((\gg, \rr),r,S)$.
 \end{cor}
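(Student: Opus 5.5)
The plan is to reduce Corollary \ref{cor:124} to Corollary \ref{cor:117}: verify that a symmetric solution of the $S$-admissible cLYBe satisfies all of Eqs.(\ref{eq:95}), (\ref{eq:96}), (\ref{eq:97}), (\ref{eq:118}) and (\ref{eq:119}). Corollary \ref{cor:117} then immediately gives that $\big((\gg,\rr),\delta_r,S\big)$ is a Reynolds Leibniz bialgebra.

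First, since $r=r^{\tau}$, we have $r^{\tau}-r=0$. So Eqs.(\ref{eq:95}) and (\ref{eq:96}) hold trivially, since each applies a linear operator to $r^{\tau}-r$.

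Second, for Eq.(\ref{eq:97}) I would substitute $r^{\tau}=r$ in each of its three bracketed expressions. The middle expression becomes $r_{12}r_{23}+r_{13}r_{23}-r_{12}r_{13}-r_{13}r_{12}$ (reading $r^{\tau}_{12}=r_{12}$ under symmetry), which is the difference of the two sides of the cLYBe Eq.(\ref{eq:100}) and so vanishes. The first and third expressions are
\[
r_{12}r_{23}+r_{13}r_{23}-r_{12}r_{13}-r_{13}r_{12}
\]
and
\[
r_{23}r_{13}+r_{12}r_{13}-r_{23}r_{12}-r_{12}r_{23}.
\]
The first coincides with the cLYBe expression directly. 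For the third, I would use the symmetry of $r$ to relabel the two copies $r,\bar r$ and exchange tensor factors, identifying it with a permuted image of the cLYBe expression. This bookkeeping step is the main obstacle, because the notation $r_{ij}r_{kl}$ places brackets in specific slots. If the identification is awkward, I would instead rely on Corollary \ref{cor:98}, which together with Theorem \ref{thm:93} already asserts that a symmetric cLYBe solution makes $(\gg,[,],\delta_r)$ a Leibniz bialgebra, i.e.\ Eqs.(\ref{eq:95})--(\ref{eq:97}) hold. I would cite it directly, and this is the cleanest route.

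Third, the $S$-admissible cLYBe includes Eqs.(\ref{eq:118}) and (\ref{eq:119}) by Definition \ref{defi:122}. By Remark \ref{rmk:123} and Lemma \ref{lem:112}(3), for symmetric $r$ each of these implies the other, so they are consistent. Combined with the standing hypothesis that $(\gg,\rr)$ is $S$-adjoint admissible, all hypotheses of Corollary \ref{cor:117} are met, and the conclusion follows.
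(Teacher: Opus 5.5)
Your proposal is correct and is essentially the paper's argument: the paper obtains Corollary~\ref{cor:124} directly from Corollary~\ref{cor:117}, with Eqs.~(\ref{eq:95})--(\ref{eq:97}) coming from the symmetry of $r$ and the cLYBe (as in Corollary~\ref{cor:98}), and Eqs.~(\ref{eq:118})--(\ref{eq:119}) holding because they are part of the definition of the $S$-admissible cLYBe. Your direct check of Eq.~(\ref{eq:97}) also works: for symmetric $r$, the third bracketed expression equals $(\tau\otimes\mathrm{id})$ applied to $r_{12}r_{23}+r_{13}r_{23}-r_{12}r_{13}-r_{13}r_{12}$, after relabeling the two copies of $r$.
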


 For a vector space $\gg$, by the isomorphism $\gg\o \gg\cong Hom(\gg^*, K)\otimes\gg\cong Hom(\gg^*, \gg)$  and $r\in \gg\o \gg$, we define
 \begin{eqnarray}\label{eq:125}
 r^{\sharp}: \gg^*\lr \gg,~~r^{\sharp}(\xi)=\langle\xi, r^1\rangle r^2.
 \end{eqnarray}

 We call $r\in \gg\o \gg$ \textbf{nondegenerate} if the map $r^{\sharp}: \gg^*\lr \gg$ defined by Eq.(\ref{eq:125}) is bijective.

 \begin{thm}\label{thm:126}  Let $(\gg, \rr)$ be a Reynolds Leibniz algebra, $r$ be a symmetric element in $\gg\o \gg$ and $S:\gg\lr \gg$ be a linear map. Then $r$ is a solution of the $S$-admissible cLYBe in $(\gg, \rr)$ if and only if $r^{\sharp}$ satisfies the following conditions:
 \begin{eqnarray*}
 &[r^{\sharp}(\xi),r^{\sharp}(\eta)]=r^{\sharp}\Big(L^*\big(r^{\sharp}(\xi)\big)\eta
 +(-L^*-R^*)\big(r^{\sharp}(\eta)\big)\xi\Big),\forall~\xi,\eta\in\gg^*&\label{eq:127}\\
 &\rr r^{\sharp}=r^{\sharp}S^*.&\label{eq:128}
 \end{eqnarray*}
 \end{thm}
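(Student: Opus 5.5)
The plan is to split the statement into two independent equivalences. By Definition \ref{defi:122}, the $S$-admissible cLYBe consists of Eq.(\ref{eq:100}) together with Eqs.(\ref{eq:118}) and (\ref{eq:119}). By Remark \ref{rmk:123} (Lemma \ref{lem:112}(3)), for symmetric $r$, Eq.(\ref{eq:119}) is equivalent to Eq.(\ref{eq:118}), so it suffices to show two things.

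First, for symmetric $r$, Eq.(\ref{eq:100}) holds if and only if $r^{\sharp}$ satisfies the bracket identity in the statement, i.e. $r^\sharp$ is an $\mathcal{O}$-operator with respect to the dual representation $(\gg^*,L^*,-L^*-R^*)$ of Lemma \ref{lem:15}.

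Second, Eq.(\ref{eq:118}) holds if and only if $\rr r^{\sharp}=r^{\sharp}S^*$.

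The second equivalence is a short pairing computation. For $\xi,\eta\in\gg^*$ we have
$$\langle \eta, \rr r^\sharp(\xi)\rangle=\langle\xi,r^1\rangle\langle\eta,\rr(r^2)\rangle=\langle \xi\o\eta,(\id\o\rr)(r)\rangle,$$
and
$$\langle \eta, r^\sharp S^*(\xi)\rangle=\langle\xi,S(r^1)\rangle\langle\eta,r^2\rangle=\langle\xi\o\eta,(S\o\id)(r)\rangle.$$
Since $\xi,\eta$ are arbitrary, $\rr r^\sharp=r^\sharp S^*$ is equivalent to $(S\o\id-\id\o\rr)(r)=0$, which is Eq.(\ref{eq:118}). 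Symmetry of $r$ is not even needed here; it only enters through Lemma \ref{lem:112} to absorb Eq.(\ref{eq:119}).

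For the first equivalence, this is the Leibniz analogue of the standard fact that a symmetric solution of the cLYBe corresponds to an $\mathcal{O}$-operator of the dual representation. It is essentially contained in \cite{TS,LMW}, and I would cite it if the exact form matches. Otherwise I would verify it directly by pairing both sides of the $r^\sharp$-identity with a third element $\zeta\in\gg^*$.

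Using $r^\sharp(\xi)=\langle\xi,r^1\rangle r^2$, the left side becomes
$$\langle\zeta,[r^\sharp(\xi),r^\sharp(\eta)]\rangle=\langle\xi\o\eta\o\zeta,\ r^1\o\bar r^1\o[r^2,\bar r^2]\rangle=\langle \xi\o\eta\o\zeta, r_{13}r_{23}\rangle.$$
For the right side, I would first use symmetry to rewrite $\langle\zeta,r^\sharp(\theta)\rangle=\langle\theta,r^\sharp(\zeta)\rangle$. Then Eqs.(\ref{eq:16}) and (\ref{eq:17}) give
$$\langle L^*(r^\sharp\xi)\eta,\, r^\sharp\zeta\rangle=-\langle\eta,[r^\sharp\xi,r^\sharp\zeta]\rangle,$$
$$\langle (-L^*-R^*)(r^\sharp\eta)\xi,\, r^\sharp\zeta\rangle=\langle\xi,[r^\sharp\eta,r^\sharp\zeta]+[r^\sharp\zeta,r^\sharp\eta]\rangle.$$
Each of these terms is a pairing of $\xi\o\eta\o\zeta$ with one of the tensors $r_{12}r_{23}$, $r^\tau_{12}r_{13}$, $r_{13}r^\tau_{12}$, possibly after permuting legs, where $r=r^\tau$ lets one rename $r^1\leftrightarrow r^2$ freely. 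Collecting terms should reproduce exactly
$$r_{12}r_{23}+r_{13}r_{23}-r^\tau_{12}r_{13}-r_{13}r^\tau_{12}=0$$
paired with $\xi\o\eta\o\zeta$. Nondegeneracy of the pairing then gives the equivalence with Eq.(\ref{eq:100}).

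I expect the main obstacle to be the leg-bookkeeping in this last step. One has to check that each bracket term lands on the correct one of the four tensors with the correct sign and in the correct slot order. I would do it by writing every term explicitly as $\langle\xi,\cdot\rangle\langle\eta,\cdot\rangle\langle\zeta,\cdot\rangle$ and using symmetry of $r$ to match it to the definitions of $r_{12}r_{23}$ and the other tensors given after Theorem \ref{thm:93}. Once Eq.(\ref{eq:100}) is identified with the $\mathcal{O}$-operator identity, combining it with the $\rr r^\sharp=r^\sharp S^*$ equivalence completes the proof.
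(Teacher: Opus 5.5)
Your proposal is correct and follows the paper's route. The paper simply invokes \cite[Proposition 4.8]{TS} for the equivalence between Eq.~(\ref{eq:100}) and the $\mathcal{O}$-operator identity for $r^{\sharp}$, and leaves implicit the pairing argument giving $\rr r^{\sharp}=r^{\sharp}S^*\Leftrightarrow$ Eq.~(\ref{eq:118}), with Eq.~(\ref{eq:119}) absorbed by Lemma~\ref{lem:112}; this is exactly how you handle it.

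Your fallback direct check also goes through with the paper's conventions. For symmetric $r$, the three right-hand terms pair to $-r_{12}r_{23}$, $r^{\tau}_{12}r_{13}$ and $r_{13}r^{\tau}_{12}$ respectively, so the identity is precisely Eq.~(\ref{eq:100}).
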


 \begin{proof}
 It is straightforward by \cite[Proposition 4.8]{TS}.
 \end{proof}

 \subsection{$\mathcal{O}$-operator on a Reynolds Leibniz algebra} Next we extend the properties of $r^{\sharp}$ in the Theorem \ref{thm:126} to a general case.

 \begin{defi}\label{defi:130} Let $(\gg, \rr)$ be a Reynolds Leibniz algebra, $(V, \rho^L, \rho^R)$ be a representation of $(\gg, [,])$ and $\a:V\lr V$ be a linear map. A linear map $T:V\lr \gg$ is called a \textbf{weak $\mathcal{O}$-operator associated to $(V, \rho^L, \rho^R)$ and $\a$} if $T$ satisfies
 \begin{eqnarray}
 &[T(u),T(v)]=T(\rho^L(T(u))v+\rho^R(T(v))u),\forall~u, v\in V, &\label{eq:131}\\
 &\rr T=T\alpha.&\label{eq:132}
 \end{eqnarray}
 in addition, if $(V, \rho^L, \rho^R, \a)$ is a representation of $(\gg, \rr)$, then we call $T$ an \textbf{$\mathcal{O}$-operator associated to $(V, \rho^L, \rho^R, \a)$}.
 \end{defi}

 \begin{ex}\label{ex:133} Let $(\gg, \rr)$ be a Reynolds Leibniz algebra. Then the identity map $\id$ on $\gg$ is an $\mathcal{O}$-operator associated to $(\gg, L, 0, \rr)$ or $(\gg, 0, R, \rr)$.
 \end{ex}

 Theorem \ref{thm:126} can be re-rewritten as follows via $\mathcal{O}$-operator.

 \begin{cor}\label{cor:134}  Let $(\gg, \rr)$ be a Reynolds Leibniz algebra, $r\in \gg\o \gg$ be symmetric, $S:\gg\lr \gg$ be a linear map. Then $r$ is a solution of the $S$-admissible cLYBe in $(\gg, \rr)$ if and only if $r^{\sharp}$ is a weak $\mathcal{O}$-operator associated to $(\gg^*, L^*, -L^*-R^*)$ and $S^*$. In addition, if $(\gg, \rr)$ is an $S$-adjoint admissible Reynolds Leibniz algebra, then $r$ is a solution of the $S$-admissible cLYBe in $(\gg, \rr)$ if and only if $r^{\sharp}$ is an $\mathcal{O}$-operator associated to the representation $(\gg^*, L^*, -L^*-R^*, S^*)$.
 \end{cor}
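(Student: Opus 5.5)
This corollary is a restatement of Theorem \ref{thm:126} in the language of Definition \ref{defi:130}. The plan is to compare the two conditions of Theorem \ref{thm:126} with Eqs.(\ref{eq:131})--(\ref{eq:132}), specialized to $V=\gg^*$, $\rho^L=L^*$, $\rho^R=-L^*-R^*$, $T=r^{\sharp}$ and $\a=S^*$.

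For the first statement, substitute these data into Eq.(\ref{eq:131}). For $\xi,\eta\in\gg^*$ the right-hand side becomes
\[
r^{\sharp}\Big(L^*\big(r^{\sharp}(\xi)\big)\eta+(-L^*-R^*)\big(r^{\sharp}(\eta)\big)\xi\Big),
\]
which is exactly the first condition in Theorem \ref{thm:126}. Eq.(\ref{eq:132}) becomes $\rr r^{\sharp}=r^{\sharp}S^*$, which is the second condition there. Hence $r^{\sharp}$ is a weak $\mathcal{O}$-operator associated to $(\gg^*, L^*, -L^*-R^*)$ and $S^*$ precisely when both conditions of Theorem \ref{thm:126} hold. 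Since $r$ is symmetric, Theorem \ref{thm:126} says this is equivalent to $r$ solving the $S$-admissible cLYBe in $(\gg,\rr)$. No further computation is needed beyond matching the terms.

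For the second statement, assume in addition that $(\gg,\rr)$ is $S$-adjoint admissible, i.e. Eqs.(\ref{eq:24})--(\ref{eq:25}) hold. By Corollary \ref{cor:23}, $(\gg^*, L^*, -L^*-R^*, S^*)$ is then a representation of $(\gg,\rr)$. By Definition \ref{defi:130}, a weak $\mathcal{O}$-operator associated to $(\gg^*,L^*,-L^*-R^*)$ and $S^*$ is therefore the same thing as an $\mathcal{O}$-operator associated to the representation $(\gg^*, L^*, -L^*-R^*, S^*)$. Combining this with the first statement gives the second equivalence.

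There is no real obstacle. The only point needing care is that the convention for $r^{\sharp}$ in Eq.(\ref{eq:125}) and the dual actions in Eqs.(\ref{eq:16})--(\ref{eq:17}) are the same ones used in Theorem \ref{thm:126}, so that the first condition there literally coincides with Eq.(\ref{eq:131}). Since the statement of Theorem \ref{thm:126} is written in exactly that form, this is immediate.
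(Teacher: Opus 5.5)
Your proposal is correct and matches the paper's argument. The paper gives no separate proof; it presents the corollary as Theorem~\ref{thm:126} rewritten via Definition~\ref{defi:130} with $V=\gg^*$, $\rho^L=L^*$, $\rho^R=-L^*-R^*$, $T=r^{\sharp}$, $\alpha=S^*$, and Corollary~\ref{cor:23} supplies the representation condition for the second part. The one prerequisite you leave tacit is that $(\gg^*, L^*, -L^*-R^*)$ is a representation of $(\gg,[,])$, as Definition~\ref{defi:130} requires; this is Lemma~\ref{lem:15} applied to the adjoint representation.
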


 In what follows, we will prove that $\mathcal{O}$-operator can provide solutions of the $S$-admissible cLYBe in semi-direct product Reynolds Leibniz algebras and then produce Reynolds Leibniz bialgebras.

 \begin{thm}\label{thm:135}  Let $(\gg,\rr)$ be a Reynolds Leibniz algebra, $(V, \rho^L, \rho^R)$ be a representation of $(\gg, [,])$, $S :\gg\lr \gg$  and $\a, \beta:V\lr V$ be linear maps. Then the following conditions are equivalent:
 \begin{enumerate}[(1)]
 \item \label{it:136} There is a Reynolds Leibniz algebra $(\gg\ltimes_{\rho^L,\; \rho^R} V, \rr+\a)$ such that the linear map $S+\beta$ on $\gg\oplus V$ is adjoint admissible to $(\gg\ltimes_{\rho^L,\; \rho^R} V, \rr+\a)$.
 \item \label{it:137} There is a Reynolds Leibniz algebra $(\gg\ltimes_{\rho^{L*}, -\rho^{L*}-\rho^{R*}} V^*, \rr+\beta^*)$ such that the linear map $S+\a^*$ on $\gg\oplus V^*$ is  adjoint admissible to $(\gg\ltimes_{\rho^{L*}, -\rho^{L*}-\rho^{R*}} V^*, \rr+\beta^*)$.
 \item \label{it:138} The following conditions are satisfied:
 \begin{enumerate}[(a)]
 \item \label{it:139} $(V, \rho^L, \rho^R, \a)$ is a representation of $(\gg, \rr)$;
 \item \label{it:140} $S$ is adjoint admissible to $(\gg, \rr)$;
 \item \label{it:141} $\beta$ is admissible to $(\gg, \rr)$ associated to $(V, \rho^L, \rho^R)$;
 \item \label{it:142} For all $x\in \gg$ and $v\in V$, the following equations hold:
 \begin{eqnarray}
 &&\beta(\rho^L(S(x))v)+\rho^L(S(x))\alpha (v)=\beta(\rho^L(x)\alpha(v))
 +\lambda \beta(\rho^L(S(x))\alpha(v)),\label{eq:143} \\
 &&\beta(\rho^R(S(x))v)+\rho^R(S(x))\alpha(v)=\beta(\rho^R(x)\alpha(v))
 +\lambda \beta(\rho^R(S(x))\alpha(v)).\label{eq:144}
 \end{eqnarray}
 \end{enumerate}
 \end{enumerate}
 \end{thm}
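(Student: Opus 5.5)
The plan is to prove the equivalences (1)$\Leftrightarrow$(3) and (2)$\Leftrightarrow$(3) separately. Each is a direct expansion of the defining identities on the semi-direct product, with the components then separated.

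For (1)$\Leftrightarrow$(3), first invoke Proposition \ref{pro:12}. It says that $(\gg\ltimes_{\rho^L,\rho^R}V,\rr+\a)$ is a Reynolds Leibniz algebra if and only if (a) holds. Next, write out Eqs.(\ref{eq:24}) and (\ref{eq:25}) for the operator $S+\beta$ on the algebra $\gg\ltimes V$ with Reynolds operator $\rr+\a$, evaluated on $x+u$ and $y+v$. Since the bracket is $[x+u,y+v]_\ltimes=[x,y]+\rho^L(x)v+\rho^R(y)u$, each identity splits into independent pieces:
\begin{enumerate}
\item[(i)] the part with $u=v=0$, which is exactly Eqs.(\ref{eq:24}) and (\ref{eq:25}) for $S$, i.e. (b);
\item[(ii)] the part with $x=0$, $u=0$ in Eq.(\ref{eq:24}), which gives $\beta(\rho^L(y)\beta(v))+\rho^L(\rr(y))\beta(v)=\beta(\rho^L(\rr(y))v)+\lambda\beta(\rho^L(\rr(y))\beta(v))$, i.e. Eq.(\ref{eq:19});
\item[(iii)] the part with $y=0$, $v=0$ in Eq.(\ref{eq:25}), which gives Eq.(\ref{eq:20}) for $\beta$ with $\rho^R$ and $\rr(y)$;
\item[(iv)] the two remaining mixed parts ($x$ with $v$ in Eq.(\ref{eq:24}), and $u$ with $y$ in Eq.(\ref{eq:25})), which should give Eqs.(\ref{eq:143}) and (\ref{eq:144}), where $\rho^L(S(x))$ and $\rho^R(S(x))$ act against $\a$.
\end{enumerate}
Pieces (ii) and (iii) together are (c), and (iv) is (d). Because the $\gg$- and $V$-components and the choices of which arguments are zero are independent, the conditions are equivalent and not merely implied.

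For (2)$\Leftrightarrow$(3), apply the same procedure to the dual semi-direct product $\gg\ltimes_{\rho^{L*},-\rho^{L*}-\rho^{R*}}V^*$ with operators $\rr+\beta^*$ and $S+\a^*$.
\begin{enumerate}
\item[(i)] By Proposition \ref{pro:12} together with Lemma \ref{lem:18}, $(V^*,\rho^{L*},-\rho^{L*}-\rho^{R*},\beta^*)$ is a representation of $(\gg,\rr)$ if and only if $\beta$ satisfies Eqs.(\ref{eq:19}) and (\ref{eq:20}), i.e. (c).
\item[(ii)] The pure-$\gg$ components of adjoint admissibility of $S+\a^*$ give (b) again.
\item[(iii)] The components involving $\a^*$ acting on $V^*$ give conditions on $\a^*$. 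Pair them with $v\in V$, using Eqs.(\ref{eq:16}) and (\ref{eq:17}), and argue as in the proof of Lemma \ref{lem:18}. Dualizing should turn them back into Eqs.(\ref{eq:3}) and (\ref{eq:4}), i.e. (a), which amounts to the fact that adjoint admissibility of $\a^*$ for the dual is dual to $\a$ being a Reynolds representation.
\item[(iv)] The remaining mixed components, after dualization, should reproduce Eqs.(\ref{eq:143}) and (\ref{eq:144}).
\end{enumerate}

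I expect the main obstacle to be the mixed components in (2). The right action there is $-\rho^{L*}-\rho^{R*}$, so after pairing, the relevant identity comes out as a sum of an $L$-type and an $R$-type expression. I would first pair the $\rho^{L*}$-component (from Eq.(\ref{eq:24})) with $v$ to obtain Eq.(\ref{eq:143}) directly. I would then substitute Eq.(\ref{eq:143}) into the Eq.(\ref{eq:25})-component, in the same way the proof of Proposition \ref{pro:71} uses Eq.(\ref{eq:24}) to simplify the second identity, so that only Eq.(\ref{eq:144}) remains. The same substitution trick handles the sums arising in step (iii) for $\a$. The roles of $\a$ and $\beta$ are swapped between (1) and (2), and (3)(d) is exactly the condition symmetric under this swap, which is why both routes land on the same list.
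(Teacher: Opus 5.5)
Your plan is correct and is essentially the paper's proof: Proposition \ref{pro:12}, then a block-by-block expansion of Eqs.(\ref{eq:24})--(\ref{eq:25}) on the semi-direct product, then the same expansion for the dual data, dualized by pairing with $v\in V$. Your explicit handling of the $-\rho^{L*}-\rho^{R*}$ terms is more careful than the paper: it claims Eq.(\ref{eq:149}) alone is equivalent to Eq.(\ref{eq:144}), but Eq.(\ref{eq:149}) pairs to the sum of Eqs.(\ref{eq:143}) and (\ref{eq:144}), so the equivalence only holds jointly with Eq.(\ref{eq:143}).

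Your bookkeeping needs fixing, though. Setting $x=u=0$ (or $y=v=0$) kills a whole argument and gives $0=0$. In fact the $(x,v)$- and $(u,y)$-blocks of Eq.(\ref{eq:24}) give Eqs.(\ref{eq:19}) and (\ref{eq:144}), and those of Eq.(\ref{eq:25}) give Eqs.(\ref{eq:143}) and (\ref{eq:20}). So the mixed conditions sit in the blocks opposite to the ones you name in (iv). Likewise, in part (2) the $\rho^{L*}$-mixed piece comes from Eq.(\ref{eq:25}) and the summed one from Eq.(\ref{eq:24}). None of this changes the final list of six conditions.
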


 \begin{proof} $\ref{it:136}\Leftrightarrow\ref{it:138}$: According to Proposition \ref{pro:12}, $(\gg\ltimes_{\rho^L,\; \rho^R} V, \rr+\alpha)$ is a Reynolds Leibniz algebra if and only if $(V, \rho{^L}, \rho{^R}, \alpha)$ is a representation of $(\gg, \rr)$. To express the condition that $S+\beta$ on $\gg\oplus V$ is adjoint admissible to $(\gg\ltimes_{\rho^L, \rho^R} V, \rr+\alpha)$, we perform the following calculations. For all $x, y\in \gg, u, v\in V$, we have
 \begin{align*}
 (S+\beta)[(x+u), (S+\beta)(y+v)]_{\ltimes} &= S([x, S(y)])+\beta(\rho^L(x)\beta(v))+\beta(\rho^R(S(y))u), \\
 [(\rr+\alpha)(x+u), (S+\beta)(y+v)]_{\ltimes} &= [\rr(x), S(y)]+\rho^L(\rr (x))\beta(v)+\rho^R(S(y))\alpha(u), \\
 (S+\beta)[(\rr+\alpha)(x+u), y+v]_{\ltimes} &= S([\rr(x), y])+\beta(\rho^L(\rr (x))v)+\beta(\rho^R(y)\alpha(u)), \\
 \lambda(S+\beta)[(\rr+\alpha)(x+u), (S+\beta)(y+v)]_{\ltimes} &=\lambda S([\rr(x), S(y)])+\lambda \beta(\rho^L(\rr(x))\beta(v))\\
 &+\lambda \beta(\rho^R(S(y))\alpha(u)), \\
 (S+\beta)[(S+\beta)(x+u), y+v]_{\ltimes} &= S([S(x), y])+\beta(\rho^L(S(x))v)+\beta(\rho^R(y)\beta(u)), \\
 [(S+\beta)(x+u), (\rr+\alpha)(y+v)]_{\ltimes} &= [S(x), \rr(y)]+\rho^L(S(x))\alpha (v)+\rho^R(\rr(y))\beta(u), \\
 (S+\beta)[x+u, (\rr+\alpha)(y+v)]_{\ltimes} &= S([x, \rr (y)])+\beta(\rho^L(x))\alpha(v))+\beta(\rho^R(\rr(y))u), \\
 \lambda(S+\beta)[(S+\beta)(x+u), (\rr+\alpha)(y+v)]_{\ltimes} &= \lambda S([S(x), \rr (y)])+\lambda \beta(\rho^L(S(x))\alpha(v))\\
 &\hspace{6mm}+\lambda \beta(\rho^R(\rr(y))\beta(u)).
 \end{align*}
 Hence Eq.(\ref{eq:24}) holds (where $\rr$ replaced by $\rr+\a$, $S$ replaced by $S+\beta$, $x$ replaced by $x+u$, $y$ replaced by $y+v$) if and only if Eq.(\ref{eq:24})(corresponding to $u=v=0$), Eq.(\ref{eq:19})(corresponding to $y=u=0$) and Eq.(\ref{eq:144})(corresponding to $x=v=0$) hold, where $x$ replaced by $y$, $v$ replaced by $u$ (corresponding to $y=v=0$), Eq.(\ref{eq:25}) holds (where $\rr$ replaced by $\rr+\a$, $S$ replaced by $S+\beta$, $x$ replaced by $x+u$, $y$ replaced by $y+v$) if and only if Eq.(\ref{eq:25})(corresponding to $u=v=0$), Eq.(\ref{eq:20})(where $x$ replaced by $y$, $v$ replaced by $u$ (corresponding to $x=v=0$)) and (\ref{eq:143}) holds (corresponding to $y=u=0$). Hence, Item \ref{it:136} holds if and only if Item \ref{it:138} holds.

 $\ref{it:137}\Leftrightarrow\ref{it:138}$: Based on $\ref{it:136}\Leftrightarrow\ref{it:138}$, let $V=V^*, \rho^L=\rho^{L*}, \rho^R=-\rho^{L*}-\rho^{R*},\beta=\a^*, \a=\beta^*$, then we have \ref{it:137} holds if and only if
 \begin{enumerate}[(i)]
 \item \label{it:145} $\big(V^*, \rho^{L*}, -\rho^{L*}-\rho^{R*}, \beta^*\big)$ is a representation of $(\gg, \rr)$, that is, \ref{it:138}\ref{it:141} holds;
 \item \label{it:146} $S$ is adjoint admissible to $(\gg, \rr)$ , that is, \ref{it:138}\ref{it:140} holds;
 \item \label{it:147} $\a^*$ is admissible to $(\gg, \rr)$ associated to $\big(V^*,  \rho^{L*}, -\rho^{L*}-\rho^{R*}\big)$, that is, \ref{it:138}\ref{it:139} holds;
 \item \label{it:148} For all $x\in \gg$ and $v\in V$, the following equations hold:
 \begin{eqnarray}
 &\alpha^*\rho^{L*}(S(x))v^*+\rho^{L*}(S(x))\beta^*(v^*)
 =\alpha^*\rho^{L*}(x)\beta^*(v^*))+\lambda \alpha^*\rho^{L*}(S(x))\beta^*(v^*)),\label{eq:190} \\
 &\alpha^*(-\rho^{L*}-\rho^{R*})(S(x))v^*)+(-\rho^{L*}-\rho^{R*})(S(x))\beta^*(v^*))\nonumber\\
 &\hspace{22mm}=\alpha^*(-\rho^{L*}-\rho^{R*})(x)\beta^*(v^*)+\lambda \alpha^*((-\rho^{L*}-\rho^{R*})(S(x))\beta^*(v^*)).\label{eq:149}
 \end{eqnarray}
 \end{enumerate}

 \noindent Therefore according to the Eqs.(\ref{eq:16}) and (\ref{eq:17}), we can obtain Eq.(\ref{eq:190}) holds if and only if Eq.(\ref{eq:143}) holds, Eq.(\ref{eq:149}) holds if and only if Eq.(\ref{eq:144}) holds. Hence, Item \ref{it:137} holds if and only if Item \ref{it:138} holds.
 \end{proof}

 \begin{thm}\label{thm:150}  Let $(\gg,\rr)$ be $\beta$-admissible on $(V, \rho{^L}, \rho{^R})$, $S:\gg\lr \gg$, $\a:V\lr V$ be two linear maps. Let $T:V\lr \gg$ be a linear map, $r=T+\tau(T)$ be an element in $(\gg\ltimes_{\rho^{L*}, -\rho^{L*}-\rho^{R*}} V^*)\o (\gg\ltimes_{\rho^{L*}, -\rho^{L*}-\rho^{R*}} V^*)$.
 \begin{enumerate}[(1)]
 \item \label{it:151} The element $r=T+\tau(T)$ is a symmetric solution of the $(S+\a^*)$-admissible cLYBe in the Reynolds Leibniz algebra $(\gg\ltimes_{\rho^{L*}, -\rho^{L*}-\rho^{R*}}V^*, \rr+\beta^*)$  if and only if $T$ is a weak $\mathcal{O}$-operator associated to $(V, \rho{^L}, \rho{^R})$ and $\alpha$, and satisfy $T\circ \beta=S\circ T$.
 \item \label{it:152} Assume that $(V, \rho{^L}, \rho{^R}, \a)$ is a representation of $(\gg, \rr)$. If $T$ is an $\mathcal{O}$-operator associated to $(V, \rho{^L}, \rho{^R}, \a)$ and $T\circ \beta=S\circ T$, then $r=T+\tau(T)$ is a symmetric solution of the $(S+\a^*)$-admissible cLYBe in the Reynolds Leibniz algebra $(\gg\ltimes_{\rho^{L*}, -\rho^{L*}-\rho^{R*}}V^*, \rr+\beta^*)$. In addition, if $(\gg,\rr)$ is $S$-adjoint admissible and Eqs.(\ref{eq:143})-(\ref{eq:144}) hold, then $(\gg\ltimes_{\rho^{L*}, -\rho^{L*}-\rho^{R*}}V^*, \rr+\beta^*)$ is $(S+\a^*)$-adjoint admissible. In this case, there is a Reynolds Leibniz bialgebra $((\gg\ltimes_{\rho^{L*}, -\rho^{L*}-\rho^{R*}}V^*, \rr+\beta^*), \delta, S+\a^*)$, where $\delta=\delta_r$ is defined by Eq.(\ref{eq:94}) and $r=T+\tau(T)$.
 \end{enumerate}
 \end{thm}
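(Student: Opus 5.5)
Identify $T\in \mathrm{Hom}(V,\gg)\cong V^*\otimes\gg\subset \hat{\gg}\otimes\hat{\gg}$, where $\hat{\gg}=\gg\ltimes_{\rho^{L*},-\rho^{L*}-\rho^{R*}}V^*$. Concretely, with a basis $\{v_i\}$ of $V$ and dual basis $\{v_i^*\}$, we have $T=\sum_i v_i^*\otimes T(v_i)$ and $r=T+\tau(T)=\sum_i\big(v_i^*\otimes T(v_i)+T(v_i)\otimes v_i^*\big)$. This $r$ is symmetric by construction. By Definition \ref{defi:122}, $r$ solves the $(S+\a^*)$-admissible cLYBe in $(\hat\gg,\rr+\beta^*)$ exactly when Eq.(\ref{eq:100}) and Eqs.(\ref{eq:118})--(\ref{eq:119}) hold, with $\rr$ replaced by $\rr+\beta^*$ and $S$ by $S+\a^*$. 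By Remark \ref{rmk:123}, symmetry reduces the admissibility part to Eq.(\ref{eq:118}) alone.

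For part (1), I split the conditions into two groups. For the cLYBe part I invoke the Leibniz-level result \cite[Proposition 4.8]{TS} (the one used for Theorem \ref{thm:126}). It says that $T+\tau(T)$ solves the cLYBe in $\gg\ltimes_{\rho^{L*},-\rho^{L*}-\rho^{R*}}V^*$ iff $T$ satisfies Eq.(\ref{eq:131}). Alternatively, I would compute $r^\sharp:\hat\gg^*=\gg^*\oplus V\to\hat\gg$, which is $r^\sharp(\xi+u)=T(u)+T^*(\xi)$, and apply the first identity of Theorem \ref{thm:126} on $\hat\gg$. Evaluating on pairs $(u,v)$, $(u,\xi)$ and $(\xi,\eta)$ gives Eq.(\ref{eq:131}), its dual form, and a trivial identity respectively.

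For the admissibility condition, Theorem \ref{thm:126} turns it into $(\rr+\beta^*)r^\sharp=r^\sharp(S+\a^*)^*=r^\sharp(S^*+\a)$. Applying both sides to $u\in V$ gives $\rr T(u)=T\a(u)$, which is Eq.(\ref{eq:132}). Applying them to $\xi\in\gg^*$ gives $\beta^*T^*(\xi)=T^*S^*(\xi)$, i.e.\ $(T\beta)^*=(ST)^*$, i.e.\ $T\beta=ST$. Altogether this gives: $r$ is a solution iff $T$ is a weak $\mathcal O$-operator associated to $(V,\rho^L,\rho^R)$ and $\a$, with $T\beta=ST$. The hypothesis that $(\gg,\rr)$ is $\beta$-admissible ensures, via Lemma \ref{lem:18} and Proposition \ref{pro:12}, that $(\hat\gg,\rr+\beta^*)$ is a Reynolds Leibniz algebra, so the statement makes sense.

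For part (2), the first claim follows at once from (1), since an $\mathcal O$-operator is in particular a weak one. For the adjoint admissibility I apply Theorem \ref{thm:135}, (c)$\Rightarrow$(b). Its four hypotheses are available: (a) $(V,\rho^L,\rho^R,\a)$ is a representation by assumption; (b) $S$ is adjoint admissible; (c) $\beta$ is admissible; (d) Eqs.(\ref{eq:143})--(\ref{eq:144}) hold. So $S+\a^*$ is adjoint admissible to $(\hat\gg,\rr+\beta^*)$. Corollary \ref{cor:124} applied to $\hat\gg$ then yields the triangular Reynolds Leibniz bialgebra $((\hat\gg,\rr+\beta^*),\delta_r,S+\a^*)$.

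The main obstacle is the bookkeeping in the cLYBe step: checking that the cross terms of Eq.(\ref{eq:100}) for $r=T+\tau(T)$ collapse exactly to Eq.(\ref{eq:131}) with the twisted dual action $-\rho^{L*}-\rho^{R*}$. I would handle this through the $r^\sharp$ formulation and \cite[Proposition 4.8]{TS} rather than by expanding tensors. I would also check the transpose conventions in $r^\sharp$ carefully, so that $\beta^*$ and $S^*$ pair correctly.
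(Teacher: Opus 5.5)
Your proof is correct and has the same skeleton as the paper's: separate the cLYBe from the admissibility condition in part (1), then get part (2) from (1) together with Theorem \ref{thm:135} and Corollary \ref{cor:124}. The differences are only in how the two halves of (1) are checked.

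\textbf{Admissibility.} The paper compares tensors in a dual basis. It rewrites $\sum_i\beta^*(e^i)\otimes T(e_i)=\sum_i e^i\otimes T\beta(e_i)$ and $\sum_i T(e_i)\otimes\alpha^*(e^i)=\sum_i T\alpha(e_i)\otimes e^i$, then reads off Eq.~\eqref{eq:119}. You instead contract the first slot and use the operator form $(\mathfrak{R}+\beta^*)r^\sharp=r^\sharp(S^*+\alpha)$ of Eq.~\eqref{eq:118} from Theorem \ref{thm:126}, with $r^\sharp(\xi+u)=T(u)+T^*(\xi)$. This is the same computation in coordinate-free form, and your evaluations on $V$ and on $\mathfrak{g}^*$ are right.

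\textbf{cLYBe half.} The paper simply cites \cite[Theorem 4.15]{MSZ}. Your primary citation is doubtful: as the paper uses it, \cite[Proposition 4.8]{TS} is the general $r^\sharp$ characterization behind Theorem \ref{thm:126}, not the statement about $T+\tau(T)$. So it is your fallback computation that actually carries the argument, and it works. In the semi-direct product one finds $L^*(x)v=\rho^L(x)v$ and $(-L^*-R^*)(x)u=\rho^R(x)u$ for $x\in\mathfrak g$ and $u,v\in V$, so the pair $(u,v)$ gives exactly Eq.~\eqref{eq:131}.

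You did omit one pair. Because the Leibniz bracket is not skew-symmetric, $(\xi,u)$ is a separate condition from $(u,\xi)$. The omission is harmless: evaluated at $w\in V$, this case reads $[Tu,Tw]+[Tw,Tu]=T\big(\rho^L(Tu)w+\rho^R(Tw)u\big)+T\big(\rho^L(Tw)u+\rho^R(Tu)w\big)$. That is the sum of Eq.~\eqref{eq:131} at $(u,w)$ and at $(w,u)$.

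With that case added, your route is self-contained, whereas the paper's is shorter but depends on the external theorem.
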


 \begin{proof}
 \ref{it:151} Let $\{e_1,e_2,...,e_n\}$ be a basis of $V$ and $\{e^1,e^2,...,e^n\}$ be its dual basis. Then $r=T+\tau(T)$ corresponds to $\sum_{i=1}^n T(e_i)\o e^i+e^i\o T(e_i) \in(\gg\ltimes_{\rho^{L*},-\rho^{L*}-\rho^{R*}}V^*)\o (\gg\ltimes_{\rho^{L*},-\rho^{L*}-\rho^{R*}}V^*)$. 
 By \cite[Theorem 4.15]{MSZ}, we have
 \begin{eqnarray*}
 r_{12}r_{23}+r_{13}r_{23}=r_{12}^{\tau}r_{13}+r_{13}r_{12}^{\tau}\Longleftrightarrow Eq.{(\ref{eq:131})}.
 \end{eqnarray*}
 Note that
 \begin{eqnarray*}
 ((\rr+\beta^*)\o \id)(r)&=&\sum_{i=1}^n\rr T(e_i)\o e^i+\beta^*(e^i)\o T(e_i), \\
 (\id\o (S+\alpha^*))(r)&=&\sum_{i=1}^nT(e_i)\o\alpha^*(e^i)+e^i\o ST(e_i).
 \end{eqnarray*}
 Further
 \begin{eqnarray*}
 \sum_{i=1}^n\beta^*(e^i)\o T(e_i)&=&\sum_{i=1}^n\sum_{j=1}^n\langle\beta^*(e^i), e_j\rangle e^j\o T(e_i)=\sum_{j=1}^n e^j\o \sum_{i=1}^n\langle e^i, \beta(e_j)\rangle T(e_i)\\
 &=&\sum_{i=1}^n e^i\o \sum_{j=1}^n\langle e^j, \beta(e_i)\rangle T(e_j)=\sum_{i=1}^ne^i\o T\beta(e_i),\\
 \sum_{i=1}^nT(e_i)\o\alpha^*(e^i)&=&\sum_{i=1}^n\sum_{j=1}^n T(e_i)\o \langle\alpha^*(e^i), e_j\rangle e^j=\sum_{i=1}^n T(e_i)\langle e^i,\alpha( e_j)\rangle\o \sum_{j=1}^ne^j\\
 &=&\sum_{i=1}^nT\alpha(e_i)\o e^i.
 \end{eqnarray*}
 Hence $((\rr+\beta^*)\o \id)(r)=(\id\o (S+\alpha^*))(r)$ if and only if $T\beta=S T$ and $\rr T=T\alpha$. Therefore, the conclusion follows.

 \ref{it:152} It follows from \ref{it:151} and Theorem \ref{thm:135}.
 \end{proof}

 We now consider some special spaces: $\beta=\pm\alpha$ or $-\alpha+\theta id$ or $\beta=\theta \alpha^{-1}$. For these cases, they all satisfy that the double dual of a representation is itself. For simplicity, we introduce a Laurent $\Pi\in K[x,x^{-1}]$ such that $\Pi(x)$ is either $\pm x$, or $-x+\theta$, or $\theta x^{-1}$ when $x$ is invertible and $0\neq \theta\in K$. Then the special cases above can be denoted by $\Pi(\alpha)$.

 To emphasize, for all $\Pi$ in the set
 \begin{eqnarray*}
 \{\pm x\}\cup(-x+K^{\times})\cup K^{\times}x^{-1}, K^{\times}:=K\backslash\{0\}.
 \end{eqnarray*}
 We have $\Pi^2(\alpha)=\alpha$ and $\Pi(\alpha^{*})=\Pi(\alpha)^{*}$. Moreover, for any linear map $T: V\rightarrow \gg$, it is obvious that $T\Pi(\alpha)=\Pi(\rr)T$ when $T\alpha=\rr T$.

 Applying Theorem \ref{thm:135}, we have
 \begin{pro}\label{pro:202}
 Let ($\gg,\rr$) be a Reynolds Leibniz algebra, ($V,\rho^{L},\rho^{R}$) be a representation of $(\gg,[,]),\alpha:V\rightarrow V$ be a linear map. For $\Pi\in {\pm x}\cup(-x+K^{\times})\cup K^{\times}x^{-1}$, there is a Reynolds Leibniz algebra $(\gg\ltimes_{\rho^{L*}, -\rho^{L*}-\rho^{R*}} V^*, \rr+\Pi(\alpha)^*)$ such that ($\Pi(\rr)+\a^*$)-adjoint admissible if and only if the $\Pi$-admissible equations (associated to the quadruple ($V,\rho^{L},\rho^{R},\alpha$)) hold. Here
 \begin{enumerate}[(1)]
 \item \label{it:204} When $\Pi=x$, that is, $\beta=\a$, $S=\rr$, the $\Pi$-admissible equations are: $(V, \rho^L, \rho^R, \a)$ is a representation of $(\gg, \rr)$ and
 \begin{eqnarray}
 &\rr([x, \rr(y)])=\rr([\rr(x), y])=\lambda\rr([\rr(x), \rr(y)]),&\label{eq:207}\\
 &\alpha(\rho^L(\rr(x))v)=\alpha(\rho^L(x)\alpha(v))=\lambda\alpha(\rho^L(\rr (x))\alpha(v)),&\label{eq:208}\\
 &\alpha(\rho^R(\rr(x))v)=\alpha(\rho^R(x)\alpha(v))=\lambda\alpha(\rho^R(\rr (x))\alpha(v)),&\label{eq:209}
 \end{eqnarray} 
 where $\forall x, y\in \gg, v\in V$.
 \item \label{it:213} When $\Pi=-x$, that is, $\beta=-\a$, $S=-\rr$ , the $\Pi$-admissible equations turn to be that $(V, \rho^L, \rho^R, \a)$ is a representation of $(\gg, \rr)$.
 \item \label{it:214} When $\Pi=-x+\theta$ with $\theta\neq0$, that is $\beta=-\a+\theta\id_V$, $S=-\rr+\theta\id_\gg$, the $\Pi$-admissible equations are: $(V, \rho^L, \rho^R, \a)$ is a representation of $(\gg, \rr)$ and
 \begin{eqnarray}
 &\theta([x, y])-[x,\rr(y)]-\rr([x,y])=\lambda \Big(\theta ([\rr(x), y])-([\rr(x), \rr(y)])-\rr([\rr(x),y])\Big),&\label{eq:215}\\
 &\theta([x, y])-[\rr(x),y]-\rr([x,y])=\lambda \Big(\theta ([x, \rr(y)])-([\rr(x), \rr(y)])-\rr([x,\rr(y)])\Big),&\label{eq:216}\\
 &\theta\rho^L(x)v-\rho^L(x)\alpha(v)-\alpha(\rho^L(x)v)=\lambda\Big(\theta\rho^L(\rr (x))v-\rho^L(\rr(x))\alpha(v)-\alpha(\rho^L(\rr(x))v)\Big),&\label{eq:217}\\
 &\theta\rho^R(x)v-\rho^R(x)\alpha(v)-\alpha(\rho^R(x)v)=\lambda\Big(\theta\rho^R(\rr (x))v-\rho^R(\rr(x))\alpha(v)-\alpha(\rho^R(\rr(x))v)\Big),&\label{eq:218}\\
 &\theta\rho^L(x)v-\rho^L(\rr (x))v-\alpha(\rho^L(x)v)=\lambda\Big(\theta\rho^L(x)\alpha(v)-\rho^L(\rr (x))\alpha(v)-\alpha(\rho^L(x)\alpha(v))\Big),&\label{eq:219}\\
 &\theta\rho^R(x)v-\rho^R(\rr (x))v-\alpha(\rho^R(x)v)=\lambda\Big(\theta\rho^R(x)\alpha(v)-\rho^R(\rr (x))\alpha(v)-\alpha(\rho^R(x)\alpha(v))\Big),&\label{eq:220}
 \end{eqnarray}
 where $\forall x, y\in \gg, v\in V$.
 \item \label{it:221} When $\Pi=\theta x^{-1}$, $\theta\neq 0$ (in which case assume $\rr$ and $\a$ are invertible), that is, $\beta=\theta\a^{-1}$, $S=\theta \rr^{-1}$, the $\Pi$-admissible equations are: $(V, \rho^L, \rho^R, \a)$ is a representation of $(\gg, \rr)$ and
 \begin{eqnarray}
 &\theta([x, y])-\rr([x, \rr(y)])=\lambda\Big(\theta([\rr(x),y])-\rr([\rr(x), \rr (y)])\Big),&\label{eq:222}\\
 &\theta([x, y])-\rr([\rr(x), y])=\lambda\Big(\theta([x,\rr(y)])-\rr([\rr(x), \rr (y)])\Big),&\label{eq:223}\\
 &\theta(\rho^L(x)v)-\alpha(\rho^L(x)\alpha(v))=\lambda\Big(\theta(\rho^L(\rr (x))v)-\alpha(\rho^L(\rr(x))\alpha(v)\Big),&\label{eq:224}\\
 &\theta(\rho^R(x)v)-\alpha(\rho^R(x)\alpha(v))=\lambda\Big(\theta(\rho^R(\rr (x))v)-\alpha(\rho^R(\rr(x))\alpha(v)\Big),&\label{eq:225}\\
 &\theta(\rho^L(x)v)-\alpha(\rho^L(\rr (x))v)=\lambda\Big(\theta(\rho^L(x)\alpha(v)-\alpha(\rho^L(\rr (x))\alpha(v)\Big),&\label{eq:226}\\
 &\theta(\rho^R(x)v)-\alpha(\rho^R(\rr (x))v)=\lambda\Big(\theta(\rho^R(x)\alpha(v)-\alpha(\rho^R(\rr (x))\alpha(v)\Big),&\label{eq:227}
 \end{eqnarray}
 where $\forall x, y\in \gg, v\in V$.
 \end{enumerate}
 \end{pro}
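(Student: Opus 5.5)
The plan is to specialize Theorem \ref{thm:135} with $\beta=\Pi(\alpha)$ and $S=\Pi(\rr)$. Under this choice, condition \ref{it:137} of Theorem \ref{thm:135} is exactly the existence statement in Proposition \ref{pro:202}, read with $\beta^*=\Pi(\alpha)^*$ and $S+\alpha^*=\Pi(\rr)+\alpha^*$. Theorem \ref{thm:135} then reduces it to the four items \ref{it:139}--\ref{it:142}:
\begin{enumerate}[(a)]
\item $(V,\rho^L,\rho^R,\alpha)$ is a representation of $(\gg,\rr)$, which is always kept;
\item Eqs.~\eqref{eq:24}--\eqref{eq:25} with $S=\Pi(\rr)$;
\item Eqs.~\eqref{eq:19}--\eqref{eq:20} with $\beta=\Pi(\alpha)$;
\item Eqs.~\eqref{eq:143}--\eqref{eq:144} with $\beta=\Pi(\alpha)$, $S=\Pi(\rr)$.
\end{enumerate}
It remains, case by case, to substitute $\Pi$ and simplify these equations, using the Reynolds identity \eqref{eq:2} and the representation identities \eqref{eq:3}--\eqref{eq:4} to cancel terms.

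\textbf{Case $\Pi=-x$.} Eq.~\eqref{eq:24} with $S=-\rr$ is exactly \eqref{eq:2}, which is the content of Example \ref{ex:pro:26}. In the same way, \eqref{eq:19}--\eqref{eq:20} with $\beta=-\alpha$ become \eqref{eq:3}--\eqref{eq:4}. For \eqref{eq:143} with $\beta=-\alpha$, $S=-\rr$, multiplying by $-1$ also gives \eqref{eq:3}. So only (a) survives.

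\textbf{Case $\Pi=x$.} Compare each equation with \eqref{eq:2}. Eq.~\eqref{eq:24} reads $\rr[x,\rr y]+[\rr x,\rr y]=\rr[\rr x,y]+\lambda\rr[\rr x,\rr y]$. Subtracting \eqref{eq:2} gives $2\rr[x,\rr y]=2\lambda\rr[\rr x,\rr y]$, or, written in the paper's combined form, one of the equalities in \eqref{eq:207}. Eq.~\eqref{eq:25} gives the other equality. The same procedure on \eqref{eq:19}, \eqref{eq:20}, \eqref{eq:143} and \eqref{eq:144}, subtracting \eqref{eq:3}--\eqref{eq:4}, yields \eqref{eq:208}--\eqref{eq:209}.

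The chain form ``$=\,=$'' in \eqref{eq:207} should come from combining the two derived relations. One cancels the common terms against \eqref{eq:2}; the other uses \eqref{eq:2} again to trade $[\rr x,\rr y]$.

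One point needs care here. The factor $2$ appears, so I would either assume $\mathrm{char}\,K\neq 2$ or keep the form given by direct subtraction. I expect that matching the paper's exact normalization of these chains is the main bookkeeping obstacle. I would first try to derive each stated equality by a single subtraction of \eqref{eq:2}, \eqref{eq:3} or \eqref{eq:4}, and check the converse direction by adding these identities back.

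\textbf{Case $\Pi=-x+\theta$.} Substitute $S=-\rr+\theta\,\id$. The parts linear in $-\rr$ reproduce \eqref{eq:2}, as in the case $\Pi=-x$, and cancel. What remains is the $\theta$-part together with the cross terms, which gives \eqref{eq:215}--\eqref{eq:216} from (b). Likewise, (c) gives \eqref{eq:219}--\eqref{eq:220}, and (d) gives \eqref{eq:217}--\eqref{eq:218}.

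\textbf{Case $\Pi=\theta x^{-1}$.} Substitute $S=\theta\rr^{-1}$, $\beta=\theta\alpha^{-1}$. Apply $\rr$ (respectively $\alpha$) on the left to clear inverses, and use invertibility to rewrite $\rr^{-1}$ terms, replacing $y$ by $\rr(y)$ where needed. This produces \eqref{eq:222}--\eqref{eq:227}. Because $\rr$ and $\alpha$ are bijective, each step is an equivalence, which gives the ``if and only if''.
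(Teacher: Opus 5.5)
Your proposal is correct and is the paper's argument: the paper proves the proposition by specializing Theorem \ref{thm:135} to $\beta=\Pi(\alpha)$, $S=\Pi(\rr)$ and leaves the case computations to an outside reference; your $\mathrm{char}\,K\neq 2$ caveat for $\Pi=x$ is a genuine implicit hypothesis, since in characteristic $2$ the cases $\Pi=x$ and $\Pi=-x$ coincide while the listed equations differ. Two small bookkeeping corrections: for $\Pi=-x+\theta$ it is item (c), i.e.\ \eqref{eq:19}--\eqref{eq:20}, that yields \eqref{eq:217}--\eqref{eq:218}, and item (d), i.e.\ \eqref{eq:143}--\eqref{eq:144}, that yields \eqref{eq:219}--\eqref{eq:220}, each after dividing by $\theta\neq 0$; and for $\Pi=\theta x^{-1}$ clearing inverses is not enough by itself, since e.g.\ \eqref{eq:24} first becomes $\theta[x,y]+\rr([\rr(x),y])=[\rr(x),\rr(y)]+\lambda\theta[\rr(x),y]$, which turns into \eqref{eq:222} only after substituting \eqref{eq:2} (and similarly \eqref{eq:3}--\eqref{eq:4} are needed for \eqref{eq:224}--\eqref{eq:227}).
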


 \begin{proof}
 The statement is consequence of Theorem \ref{thm:135} in the case that $\beta=\Pi(\alpha)$ and $S=\Pi(\rr)$. For the detailed proof the reader can refer to \cite[Proposition 4.22]{BGM}.
 \end{proof}

 By Proposition \ref{pro:202} and Theorem \ref{thm:de:87}, we have
 \begin{cor}\label{cor:pro:202}
 Let ($\gg, \rr$) be a Rota-Baxter Leibniz algebra of weight 0, ($V,\rho^{L},\rho^{R}$) be a representation of $(\gg,[,])$, $\alpha:V\rightarrow V$ be a linear map. For $\Pi\in {\pm x}\cup(-x+K^{\times})\cup K^{\times}x^{-1}$, there is a Rota-Baxter Leibniz algebra $(\gg\ltimes_{\rho^{L*}, -\rho^{L*}-\rho^{R*}} V^*, \rr+\Pi(\alpha)^*)$ of weight 0 such that ($\Pi(\rr)+\a^*$)-adjoint admissible if and only if the $\Pi$-admissible equations (associated to the quadruple ($V,\rho^{L},\rho^{R},\alpha$)) hold. Here
 \begin{enumerate}[(1)]
 \item \label{it:204} When $\Pi=x$, that is, $\beta=\a$, $S=\rr$, the $\Pi$-admissible equations are: $(V, \rho^L, \rho^R, \a)$ is a representation of $(\gg, \rr)$ and
 \begin{eqnarray}
 &\rr([x, \rr(y)])=\rr([\rr(x), y])=0,&\label{eq:207-1}\\
 &\alpha(\rho^L(\rr(x))v)=\alpha(\rho^L(x)\alpha(v))=0,&\label{eq:208-1}\\
 &\alpha(\rho^R(\rr(x))v)=\alpha(\rho^R(x)\alpha(v))=0,&\label{eq:209-1}
 \end{eqnarray}
 where $\forall x, y\in \gg, v\in V$.
 \item \label{it:213} When $\Pi=-x$, that is, $\beta=-\a$, $S=-\rr$ , the $\Pi$-admissible equations turn to be that $(V, \rho^L, \rho^R, \a)$ is a representation of $(\gg, \rr)$.
 \item \label{it:214} When $\Pi=-x+\theta$ with $\theta\neq0$, that is $\beta=-\a+\theta\id_V$, $S=-\rr+\theta\id_\gg$, the $\Pi$-admissible equations are: $(V, \rho^L, \rho^R, \a)$ is a representation of $(\gg, \rr)$ and
 \begin{eqnarray}
 &\theta([x, y])=[x,\rr(y)]+\rr([x,y]),&\label{eq:215-1}\\
 &\theta([x, y])=[\rr(x),y]+\rr([x,y]),&\label{eq:216-1}\\
 &\theta\rho^L(x)v=\rho^L(x)\alpha(v)+\alpha(\rho^L(x)v),&\label{eq:217-1}\\
 &\theta\rho^R(x)v=\rho^R(x)\alpha(v)+\alpha(\rho^R(x)v),&\label{eq:218-1}\\
 &\theta\rho^L(x)v=\rho^L(\rr (x))v+\alpha(\rho^L(x)v),&\label{eq:219-1}\\
 &\theta\rho^R(x)v=\rho^R(\rr (x))v+\alpha(\rho^R(x)v),&\label{eq:220-1}
 \end{eqnarray}
 where $\forall x, y\in \gg, v\in V$.
 \item \label{it:221} When $\Pi=\theta x^{-1}$, $\theta\neq 0$ (in which case assume $\rr$ and $\a$ are invertible), that is, $\beta=\theta\a^{-1}$, $S=\theta \rr^{-1}$, the $\Pi$-admissible equations are: $(V, \rho^L, \rho^R, \a)$ is a representation of $(\gg, \rr)$ and
 \begin{eqnarray}
 &\theta([x, y])=\rr([x, \rr(y)]),&\label{eq:222-1}\\
 &\theta([x, y])=\rr([\rr(x), y]),&\label{eq:223-1}\\
 &\theta(\rho^L(x)v)=\alpha(\rho^L(x)\alpha(v)),&\label{eq:224-1}\\
 &\theta(\rho^R(x)v)=\alpha(\rho^R(x)\alpha(v)),&\label{eq:225-1}\\
 &\theta(\rho^L(x)v)=\alpha(\rho^L(\rr (x))v),&\label{eq:226-1}\\
 &\theta(\rho^R(x)v)=\alpha(\rho^R(\rr (x))v),&\label{eq:227-1}
 \end{eqnarray}
 where $\forall x, y\in \gg, v\in V$.
 \end{enumerate}
 \end{cor}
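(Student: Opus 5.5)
The plan is to specialize Proposition \ref{pro:202} to $\lambda=0$ and then simplify each list of $\Pi$-admissible equations. When $\lambda=0$, the Reynolds identity Eq.(\ref{eq:2}) becomes $[\rr(x),\rr(y)]=\rr([x,\rr(y)])+\rr([\rr(x),y])$. This is exactly the Rota-Baxter identity of weight $0$, as noted in Remark \ref{rmk:9}(1). Likewise Eqs.(\ref{eq:3})--(\ref{eq:4}) become the defining conditions of a representation of a Rota-Baxter Leibniz algebra of weight $0$, and Eqs.(\ref{eq:24})--(\ref{eq:25}) become the first two conditions in item (4) of Theorem-Definition \ref{thm:de:87}. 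Consequently, the phrases ``Reynolds Leibniz algebra $(\gg\ltimes V^*,\rr+\Pi(\alpha)^*)$'' and ``$(\Pi(\rr)+\a^*)$-adjoint admissible'' in Proposition \ref{pro:202} translate verbatim to the weight-$0$ Rota-Baxter setting. The equivalence statement therefore carries over unchanged, and it remains only to compute the specialized equations.

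Next, I will set $\lambda=0$ in each case of Proposition \ref{pro:202}. For $\Pi=x$, Eqs.(\ref{eq:207})--(\ref{eq:209}) have the form $A=B=\lambda C$, which reduces to $A=B=0$. This gives Eqs.(\ref{eq:207-1})--(\ref{eq:209-1}). For $\Pi=-x$, no extra equations occur, so the case is immediate. For $\Pi=-x+\theta$, every right-hand side in Eqs.(\ref{eq:215})--(\ref{eq:220}) carries the factor $\lambda$ and hence vanishes. Rearranging the remaining equality $\theta[x,y]-[x,\rr(y)]-\rr([x,y])=0$, and its analogues, gives Eqs.(\ref{eq:215-1})--(\ref{eq:220-1}). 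For $\Pi=\theta x^{-1}$, the same reasoning applied to Eqs.(\ref{eq:222})--(\ref{eq:227}) yields Eqs.(\ref{eq:222-1})--(\ref{eq:227-1}).

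The only step needing real care, rather than bookkeeping, is the case $\Pi=x$. There I must make sure that the chained equalities in Proposition \ref{pro:202} really were derived from Theorem \ref{thm:135} as two separate equations. Concretely, with $S=\rr$ and $\beta=\alpha$, Eq.(\ref{eq:24}) reads
\[
\rr([x,\rr(y)])+[\rr(x),\rr(y)]=\rr([\rr(x),y])+\lambda\rr([\rr(x),\rr(y)]).
\]
I will combine this with Eq.(\ref{eq:2}) and with Eq.(\ref{eq:25}) in the same way. At $\lambda=0$, subtracting the Rota-Baxter identity directly gives $\rr([x,\rr(y)])=0$, and the symmetric argument gives $\rr([\rr(x),y])=0$. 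The $\alpha$-equations then follow by the same subtraction applied to Eqs.(\ref{eq:19})--(\ref{eq:20}) and (\ref{eq:143})--(\ref{eq:144}), together with Eqs.(\ref{eq:3})--(\ref{eq:4}).
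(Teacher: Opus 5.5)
Your proposal is correct and follows the paper's own argument: the corollary is obtained by setting $\lambda=0$ in Proposition \ref{pro:202}, with Remark \ref{rmk:9} and Theorem-Definition \ref{thm:de:87} identifying the weight-$0$ Reynolds notions (representation, adjoint admissibility) with the Rota-Baxter ones. One small correction to your extra check for $\Pi=x$: subtracting the Rota-Baxter identity from Eq.(\ref{eq:24}) gives $2\rr([x,\rr(y)])=0$, and subtracting Eq.(\ref{eq:3}) from Eq.(\ref{eq:19}) gives $2\alpha(\rho^L(x)\alpha(v))=0$, so this case (like Proposition \ref{pro:202} itself) tacitly requires that the characteristic of $K$ is not $2$.
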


 \begin{thm}\label{thm:228}  Let $(\gg, \rr)$ be a Reynolds Leibniz algebra, $(V, \rho{^L}, \rho{^R})$ be a representation of $(\gg, [,])$ and $\a:V\lr V, T:V\lr \gg$ be linear maps. Let $\Pi\in {\pm x}\cup(-x+K^{\times})\cup K^{\times}x^{-1}$.
 \begin{enumerate}[(1)]
 \item \label{it:229}Let $(V, \rho{^L}, \rho{^R}, \Pi(\a))$ be an admissible quadruple of $(\gg, \rr)$. Then $r=T+\tau(T)$ is a symmetric solution of the ($\Pi(\rr)+\a^*$)-admissible cLYBe in the Reynolds Leibniz algebra $(\gg\ltimes_{\rho^{L*}, -\rho^{L*}-\rho^{R*}}V^*,$ $\rr+\Pi(\a^*))$ if and only if $T$ is a weak $\mathcal{O}$-operator associated to $(V, \rho{^L}, \rho{^R})$ and $\a$.
 \item \label{it:230}Assume the validity of the $\Pi$-admissible equations, given respectively by Eqs(\ref{eq:3})-(\ref{eq:4}), (\ref{eq:207})-(\ref{eq:209}) for $\Pi=+ x$, by Eqs(\ref{eq:3})-(\ref{eq:4}) for $\Pi=- x$, Eqs(\ref{eq:3})-(\ref{eq:4}), (\ref{eq:215})-(\ref{eq:220}) for $\Pi\in-x+K^{\times}$ , Eqs(\ref{eq:3})-(\ref{eq:4}), (\ref{eq:222})-(\ref{eq:227}) for $\Pi\in K^{\times}x^{-1}$, then $(V, \rho{^L}, \rho{^R},$ $ \a)$ is a representation of $(\gg, \rr)$ and there is a $(\Pi(\rr)+\a^*)$-adjoint admissible Reynolds Leibniz algebra $(\gg\ltimes_{\rho^{L*}, -\rho^{L*}-\rho^{R*}}V^*,$ $\rr+\Pi(\a^*))$.
    If $T$ is an $\mathcal{O}$-operator associated to $(V, \rho{^L}, \rho{^R}, \a)$, then $r=T+\tau(T)$ is a symmetric solution of the ($\rr+\Pi(\a^*))$-admissible cLYBe in the Reynolds Leibniz algebra $(\gg\ltimes_{\rho^{L*}, -\rho^{L*}-\rho^{R*}}V^*,$ $\rr+\Pi(\a^*))$. Furthermore, there is a Reynolds Leibniz bialgebra $(\gg\ltimes_{\rho^{L*}, -\rho^{L*}-\rho^{R*}}V^*,\rr+\Pi(\a^*)), \delta, \Pi(\rr)+\a^*)$, where the linear map $\delta=\delta_r$ is defined by equation(\ref{eq:94}) and $r=T+\tau(T)$.
 \end{enumerate}
 \end{thm}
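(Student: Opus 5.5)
The plan is to specialize Theorem \ref{thm:150} to $\beta=\Pi(\a)$ and $S=\Pi(\rr)$, using the stated properties of $\Pi$: $\Pi^2=\id$ on the relevant maps, $\Pi(\a^*)=\Pi(\a)^*$, and $T\a=\rr T\Rightarrow T\Pi(\a)=\Pi(\rr)T$. Proposition \ref{pro:202} will provide the admissibility input needed in part \ref{it:230}.

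For part \ref{it:229}, the hypothesis that $(V,\rho^L,\rho^R,\Pi(\a))$ is an admissible quadruple is read as saying that $(\gg,\rr)$ is $\Pi(\a)$-admissible on $(V,\rho^L,\rho^R)$. Taking $\beta=\Pi(\a)$ and $S=\Pi(\rr)$ in Theorem \ref{thm:150}\ref{it:151}, and noting $\beta^*=\Pi(\a)^*=\Pi(\a^*)$, the theorem says that $r=T+\tau(T)$ is a symmetric solution of the $(\Pi(\rr)+\a^*)$-admissible cLYBe in $(\gg\ltimes V^*,\rr+\Pi(\a^*))$ if and only if three things hold: Eq.\eqref{eq:131}, $\rr T=T\a$, and $T\Pi(\a)=\Pi(\rr)T$. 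The third condition is automatic given the second, by the remark preceding Proposition \ref{pro:202}. I will verify that remark case by case. For $\pm x$ it is trivial. For $-x+\theta$ it follows by linearity. For $\theta x^{-1}$, multiply $T\a=\rr T$ on the left by $\rr^{-1}$ and on the right by $\a^{-1}$. The remaining two conditions are exactly the definition of a weak $\mathcal{O}$-operator associated to $(V,\rho^L,\rho^R)$ and $\a$, which gives the equivalence.

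For part \ref{it:230}, Proposition \ref{pro:202} (that is, Theorem \ref{thm:135} with $\beta=\Pi(\a)$, $S=\Pi(\rr)$) gives four facts under the listed $\Pi$-admissible equations:
\begin{enumerate}[(i)]
\item $(V,\rho^L,\rho^R,\a)$ is a representation of $(\gg,\rr)$;
\item $\Pi(\rr)$ is adjoint admissible to $(\gg,\rr)$;
\item $\Pi(\a)$ is admissible to $(\gg,\rr)$ with respect to $(V,\rho^L,\rho^R)$;
\item Eqs.\eqref{eq:143}--\eqref{eq:144} hold.
\end{enumerate}
By equivalence \ref{it:137} of Theorem \ref{thm:135}, this means the semidirect product $(\gg\ltimes V^*,\rr+\Pi(\a^*))$ is a Reynolds Leibniz algebra to which $\Pi(\rr)+\a^*$ is adjoint admissible. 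Now take an $\mathcal{O}$-operator $T$. Fact (iii) lets us apply part \ref{it:229}, which shows $r=T+\tau(T)$ is a symmetric solution of the admissible cLYBe. Corollary \ref{cor:124}, applied on the $(\Pi(\rr)+\a^*)$-adjoint admissible Reynolds Leibniz algebra $\gg\ltimes V^*$, then yields the triangular Reynolds Leibniz bialgebra with $\delta=\delta_r$. Equivalently, this is exactly Theorem \ref{thm:150}\ref{it:152} with the present choices of $\beta$ and $S$.

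The main obstacle is checking that the $\Pi$-admissible equations listed in the statement really coincide with conditions (b)--(d) of Theorem \ref{thm:135}\ref{it:138} in each case.
\begin{itemize}
\item For $\Pi=-x$, Example \ref{ex:pro:26} handles $S=-\rr$, and the analogous computation handles $\beta=-\a$.
\item For $\Pi=x$, substituting into Eqs.\eqref{eq:24}, \eqref{eq:19} and \eqref{eq:143} and subtracting the Reynolds identity \eqref{eq:2} (resp.\ \eqref{eq:3}) should give the chains \eqref{eq:207}--\eqref{eq:209}.
\item For $-x+\theta$ and $\theta x^{-1}$, the same substitution followed by simplification with \eqref{eq:2}--\eqref{eq:4} should produce \eqref{eq:215}--\eqref{eq:227}.
\end{itemize}
I would defer these routine expansions to Proposition \ref{pro:202}, as the paper does, and check the constant-shift case $-x+\theta$ explicitly as a sanity test, since that is where cancellation is least obvious.
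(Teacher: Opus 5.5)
Your proposal is correct and follows the same route as the paper. For part (1) you specialize Theorem \ref{thm:150}\ref{it:151} to $\beta=\Pi(\alpha)$, $S=\Pi(\rr)$, and the extra condition $T\Pi(\alpha)=\Pi(\rr)T$ drops out because it follows from $\rr T=T\alpha$; for part (2) you combine Proposition \ref{pro:202} with Theorem \ref{thm:150}\ref{it:152} (equivalently Corollary \ref{cor:124}), which is exactly what the paper does.
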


 \begin{proof}
 \ref{it:229} From the Theorem \ref{thm:150}\ref{it:151}.

 \ref{it:230} From the Proposition \ref{pro:202} and Theorem \ref{thm:150}\ref{it:152}.
 \end{proof}

 By Theorem \ref{thm:228} and Theorem \ref{thm:de:87}, we have
 \begin{cor}\label{cor:thm:228} Let $(\gg, \rr)$ be a Rota-Baxter Leibniz algebra of weight 0, $(V, \rho{^L}, \rho{^R})$ be a representation of $(\gg, [,])$ and $\a:V\lr V, T:V\lr \gg$ be linear maps. Let $\Pi\in {\pm x}\cup(-x+K^{\times})\cup K^{\times}x^{-1}$.
 \begin{enumerate}[(1)]
 \item \label{it:229} Let $(V, \rho{^L}, \rho{^R}, \Pi(\a))$ be an admissible quadruple of $(\gg, \rr)$. Then $r=T+\tau(T)$ is a symmetric solution of the ($\Pi(\rr)+\a^*$)-admissible cLYBe in the Rota-Baxter Leibniz algebra $(\gg\ltimes_{\rho^{L*}, -\rho^{L*}-\rho^{R*}}V^*,$ $\rr+\Pi(\a^*))$ of weight 0 if and only if $T$ is a weak $\mathcal{O}$-operator associated to $(V, \rho{^L}, \rho{^R})$ and $\a$.
 \item \label{it:230} Assume the validity of the $\Pi$-admissible equations, given respectively by Eqs.(\ref{eq:3})-(\ref{eq:4}), (\ref{eq:207-1})-(\ref{eq:209-1}) for $\Pi=+x$, by Eqs.(\ref{eq:3})-(\ref{eq:4}) for $\Pi=- x$, Eqs.(\ref{eq:3})-(\ref{eq:4}), (\ref{eq:215-1})-(\ref{eq:220-1}) for $\Pi\in-x+K^{\times}$ , Eqs.(\ref{eq:3})-(\ref{eq:4}), (\ref{eq:222-1})-(\ref{eq:227-1}) for $\Pi\in K^{\times}x^{-1}$, then $(V, \rho{^L}, \rho{^R},$ $ \a)$ is a representation of $(\gg, \rr)$ and there is a $(\Pi(\rr)+\a^*)$-adjoint admissible Rota-Baxter Leibniz algebra $(\gg\ltimes_{\rho^{L*}, -\rho^{L*}-\rho^{R*}}V^*,$ $\rr+\Pi(\a^*))$ of weight 0.
    If $T$ is an $\mathcal{O}$-operator associated to $(V, \rho{^L}, \rho{^R}, \a)$, then $r=T+\tau(T)$ is a symmetric solution of the ($\rr+\Pi(\a^*))$-admissible cLYBe in the Rota-Baxter Leibniz algebra $(\gg\ltimes_{\rho^{L*}, -\rho^{L*}-\rho^{R*}}V^*,$ $\rr+\Pi(\a^*))$ of weight 0. Furthermore, there is a Rota-Baxter Leibniz algebra $(\gg\ltimes_{\rho^{L*}, -\rho^{L*}-\rho^{R*}}V^*,\rr+\Pi(\a^*)), \delta, \Pi(\rr)+\a^*)$ of weight 0, where the linear map $\delta=\delta_r$ is defined by equation(\ref{eq:94}) and $r=T+\tau(T)$.
 \end{enumerate}
 \end{cor}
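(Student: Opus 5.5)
This corollary is the specialization $\lambda=0$ of Theorem \ref{thm:228}. A Reynolds Leibniz algebra of weight $0$ is exactly a Rota-Baxter Leibniz algebra of weight $0$ (Remark \ref{rmk:9}), and Theorem-Definition \ref{thm:de:87} identifies the weight-zero Reynolds Leibniz bialgebras with the Rota-Baxter Leibniz bialgebras of weight $0$. So I plan to apply Theorem \ref{thm:228} verbatim with $\lambda=0$ and translate each hypothesis and conclusion into the Rota-Baxter language.

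For item (1), I set $\lambda=0$ in Theorem \ref{thm:228}(1). The notions involved do not change form under this substitution. The symmetric solution of the $(\Pi(\rr)+\a^*)$-admissible cLYBe is given by Eq.(\ref{eq:100}) together with Eqs.(\ref{eq:118})--(\ref{eq:119}), and none of these equations involves $\lambda$. A weak $\mathcal{O}$-operator (Eqs.(\ref{eq:131})--(\ref{eq:132})) also does not involve $\lambda$. The only $\lambda$-dependence sits in the Reynolds operator condition Eq.(\ref{eq:2}), which becomes the weight-zero Rota-Baxter identity. Hence the equivalence carries over unchanged, and it is in any case an instance of Theorem \ref{thm:150}(1).

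For item (2), I substitute $\lambda=0$ into the $\Pi$-admissible equations of Proposition \ref{pro:202}, which yields exactly Corollary \ref{cor:pro:202}. I check this case by case.
\begin{enumerate}[(i)]
\item For $\Pi=x$, Eqs.(\ref{eq:207})--(\ref{eq:209}) reduce to Eqs.(\ref{eq:207-1})--(\ref{eq:209-1}), since the right-hand terms $\lambda\rr([\rr(x),\rr(y)])$ and their analogues vanish.
\item For $\Pi=-x+\theta$, the right-hand sides of Eqs.(\ref{eq:215})--(\ref{eq:220}) vanish, which gives Eqs.(\ref{eq:215-1})--(\ref{eq:220-1}).
\item For $\Pi=\theta x^{-1}$, Eqs.(\ref{eq:222})--(\ref{eq:227}) similarly reduce to Eqs.(\ref{eq:222-1})--(\ref{eq:227-1}).
\end{enumerate}
Under these hypotheses, Theorem \ref{thm:228}(2) provides three things: the representation $(V,\rho^L,\rho^R,\a)$, the $(\Pi(\rr)+\a^*)$-adjoint admissible structure on the semi-direct product, and the symmetric solution $r=T+\tau(T)$ coming from an $\mathcal{O}$-operator $T$. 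It also provides the triangular Reynolds Leibniz bialgebra $\big((\gg\ltimes_{\rho^{L*},-\rho^{L*}-\rho^{R*}}V^*,\rr+\Pi(\a^*)),\delta_r,\Pi(\rr)+\a^*\big)$. Setting $\lambda=0$ in Definition \ref{de:87}, which is precisely Theorem-Definition \ref{thm:de:87}, turns this into a Rota-Baxter Leibniz bialgebra of weight $0$.

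I expect no real obstacle; the argument is bookkeeping. The only point needing care is that each $\lambda$-dependent equation used, namely Eqs.(\ref{eq:3}), (\ref{eq:4}), (\ref{eq:24}), (\ref{eq:25}), (\ref{eq:30}), (\ref{eq:88}) and (\ref{eq:89}), specializes to the corresponding weight-zero Rota-Baxter condition. I will also read the final conclusion as producing a Rota-Baxter Leibniz \emph{bialgebra} of weight $0$.
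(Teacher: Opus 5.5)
Your proposal is correct and matches the paper. The paper also obtains this corollary by setting $\lambda=0$ in Theorem \ref{thm:228} and reading the result through Theorem-Definition \ref{thm:de:87}, with Corollary \ref{cor:pro:202} supplying the weight-zero $\Pi$-admissible equations. You are also right to read the final conclusion as a Rota-Baxter Leibniz \emph{bialgebra} of weight $0$.
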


 \section{Classification of 2-dimensional triangular Reynolds Leibniz bialgebras} \label{se:5} In this section, assuming the coproduct $\delta_r \neq 0$, we classify all triangular Reynolds Leibniz bialgebras. This is achieved by applying the classification of 2-dimensional Leibniz algebras over the real filed $\mathbb{R}$ from \cite{AOR} and the 2-dimensional triangular Leibniz bialgebras described in \cite{LMW}.

 \begin{thm}\label{thm:231}   Let $(\gg, [,])$ be a 2-dimensional Leibniz algebra with basis $\mathcal{B}=\{e_1,e_2\}$ , and $\lambda,\g,k_i, l_i,m_i, n_i$, $i=1,2$ be parameters.

 \begin{enumerate}[(1)]
 \item \label{it:(1)} Let $\gg$ be a Leibniz algebra with the product given by
 \begin{center}
 \begin{tabular}{r|rr}
 $[,]$ & $e_1$  & $e_2$   \\
 \hline
 $e_1$ & $0$  & $0$ \\
 $e_2$ & $0$  & $e_1$ \\
 \end{tabular}.
 \end{center}
 According to \cite[Theorem 4.1 (a)]{LMW}, we know that $r=\eta e_1\o e_1+\gamma e_1\o e_2+\gamma e_2\o e_1$ is a symmetric solution of the cLYBe in $\gg$, and
 $\left\{
 \begin{array}{l}
 \delta_r(e_1)=0\\
 \delta_r(e_2)=\gamma e_1\o e_1\\
 \end{array}
 \right.$\hspace{-2mm}.
 Then $(\gg, [,], \d_r)$ is a triangular Leibniz bialgebra. In addition, all the Reynolds operators on $\gg$ are given by
 \begin{eqnarray*}
 &&(R1)~~\label{it:(R1)}
 \left\{
 \begin{array}{l}
 \rr(e_1)=k_1 e_1\\
 \rr(e_2)=l_1 e_1 \\
 \end{array}
 \right.\hspace{-2mm};~~(R2)~~\label{it:(R2)}
 \left\{
 \begin{array}{l}
 \rr(e_1)=k_1 e_1\\
 \rr(e_2)=l_1 e_1+\frac{2k_1}{1+\lambda k_1} e_2\\
 (k_1\neq-\frac{1}{\lambda}, \lambda\neq 0)
 \end{array}
 \right. \hspace{-2mm};~~(R3)~~\label{it:(R5)}
 \left\{
 \begin{array}{l}
 \rr(e_1)=k_1 e_1\\
 \rr(e_2)=l_1 e_1+2k_1 e_2\\
 (\lambda=0)\\
 \end{array}
 \right. \hspace{-2mm}.
 \end{eqnarray*}
 When $\g\neq 0$, that is to say, $\d_r\neq 0$, all the triangular Reynolds Leibniz bialgebras $\big((\gg, \rr), \d_r,  S\big)$ are given as follows:
 {\small\begin{eqnarray*}
 &&(a)~~\left\{
 \begin{array}{l}
 \rr(e_1)=k_1 e_1\\
 \rr(e_2)=l_1 e_1\\
 S(e_1)=0\\
 S(e_2)=(l_1+\frac{k_1\eta}{\gamma}) e_1+k_1 e_2
 \end{array}
 \right.\hspace{-2mm};\
 ~~(b)~~\left\{
 \begin{array}{l}
 \rr(e_1)=0\\
 \rr(e_2)=l_1 e_1\\
 S(e_1)=0\\
 S(e_2)=l_1 e_1\\
 \end{array}
 \right.\hspace{-2mm};~~(c)~~\left\{
 \begin{array}{l}
 \rr(e_1)=k_1 e_1\\
 \rr(e_2)=l_1 e_1+2k_1 e_2\\
 S(e_1)=2k_1 e_1\\
 S(e_2)=(l_1-\frac{k_1\eta}{\gamma}) e_1+k_1e_2
 \end{array}
 \right.\hspace{-3mm}.
 \end{eqnarray*}
 } 
 \item \label{it:(2)} Let $\gg$ be a Leibniz algebra with basis
 $\mathcal{B}=\{e_1,e_2\}$ and the product is defined by
 \begin{center}
 \begin{tabular}{r|rr}
 $[,]$ & $e_1$  & $e_2$   \\
 \hline
 $e_1$ & $0$  & $0$ \\
 $e_2$ & $e_1$  & $e_1$  \\
 \end{tabular}.
 \end{center}
 Then all the Reynolds operators on $\gg$ are given by
 \begin{eqnarray*}
 &&(R1) \left\{
 \begin{array}{l}
 \rr(e_1)=0\\
 \rr(e_2)=l_1 e_1-l_1 e_2\\
 \end{array}
 \right.\hspace{-2mm};~~(R2)\left\{
 \begin{array}{l}
 \rr(e_1)=0\\
 \rr(e_2)=l_1 e_1\\
 \end{array}
 \right.\hspace{-2mm};~~(R3)\left\{
 \begin{array}{l}
 \rr(e_1)=k_1 e_1\\
 \rr(e_2)=(k_1-\frac{1}{\lambda}) e_1+\frac{1}{\lambda} e_2\\
 (\lambda \neq0)
 \end{array}
 \right.\hspace{-2mm}.
 \end{eqnarray*}
 \begin{enumerate}[(I)]
 \item \label{it:(I)} According to \cite[Theorem 4.1 (b)]{LMW}, we know that $r=\eta e_1\o e_1+\gamma e_1\o e_2+\gamma e_2\o e_1$ is the symmetric solution of the cLYBe in $\gg$, and
 $$\left\{
 \begin{array}{l}
 \delta_r(e_1)=0\\
 \delta_r(e_2)=(\eta+\gamma) e_1\o e_1+\gamma e_1\o e_2\\
 \end{array}
 \right.\hspace{-2mm},$$
 then $(\gg, [,], \d_r)$ is a triangular Leibniz bialgebra.
 \item   \label{it:(II)} According to \cite[Theorem 4.1 (b)]{LMW}, we know that $r=\eta e_1\o e_1-\eta  e_1\o e_2-\eta  e_2\o e_1+\eta  e_2\o e_2$ is the symmetric solution of the cLYBe in $\gg$, and
 $$\left\{
 \begin{array}{l}
 \delta_r(e_1)=\eta(e_1\o e_2-e_2\o e_1)\\
 \delta_r(e_2)=\eta(e_1\o e_2-e_2\o e_1)\\
 \end{array}
 \right.\hspace{-2mm}.$$
 \end{enumerate}
 then $(\gg, [,], \d_r)$ is a triangular Leibniz bialgebra.

 For Case \ref{it:(I)}, if $\d_r\neq 0$, then all the triangular Reynolds Leibniz bialgebras $\big((\gg, \rr), \d_r, S\big)$ are given as follows:
 \begin{eqnarray*}
 &(a)\left\{
 \begin{array}{l}
 \rr(e_1)=0\\
 \rr(e_2)=0\\
 S(e_1)=0\\
 S(e_2)=n_1e_1+n_2e_2\\
 \end{array}
 \right.\hspace{-2mm},\
 ~~ (b)\left\{
 \begin{array}{l}
 \rr(e_1)=0\\
 \rr(e_2)=l_1 e_1-l_1e_2\\
 S(e_1)=0\\
 S(e_2)=n_1e_1-n_1e_2\\
 \end{array}
 \right.\hspace{-2mm},\
 ~~(c)\left\{
 \begin{array}{l}
 \rr(e_1)=0\\
 \rr(e_2)=-\frac{1}{\lambda}e_1+\frac{1}{\lambda}e_2\\
 S(e_1)=\frac{1}{\lambda}e_1\\
 S(e_2)=\frac{1}{\lambda}e_1\\
 (\l\neq 0)
 \end{array}
 \right.\hspace{-2mm},&\\
 &(d)\left\{
 \begin{array}{l}
 \rr(e_1)=0\\
 \rr(e_2)=l_1 e_1\\
 S(e_1)=0\\
 S(e_2)=n_1e_1\\
 \end{array}
 \right.\hspace{-2mm},~~~(e)\left\{
 \begin{array}{l}
 \rr(e_1)=0\\
 \rr(e_2)=l_1 e_1\\
 S(e_1)=0\\
 S(e_2)=l_1e_1\\
 \end{array}
 \right.\hspace{-2mm},~~(f)\left\{
 \begin{array}{l}
 \rr(e_1)=0\\
 \rr(e_2)=l_1 e_1\\
 S(e_1)=0\\
 S(e_2)=n_1e_1+n_2e_2\\
 \end{array}
 \right.\hspace{-2mm},&\\
 &(g)\left\{
 \begin{array}{l}
 \rr(e_1)=\frac{2n_2}{1+\lambda n_2} e_1\\
 \rr(e_2)=(\frac{2n_2}{1+\lambda n_2}-\frac{1}{\lambda}) e_1+\frac{1}{\lambda} e_2\\
 S(e_1)=\frac{2n_2}{1+\lambda n_2}e_1\\
 S(e_2)=(\frac{2n_2}{1+\lambda n_2}-n_2)e_1+n_2e_2\\
 (\lambda \neq 0, 1+\lambda n_2\neq 0 )\\
 \end{array}
 \right.\hspace{-2mm},~~~(h)\left\{
 \begin{array}{l}
 \rr(e_1)=\frac{2}{\lambda} e_1\\
 \rr(e_2)=\frac{1}{\lambda} e_1+\frac{1}{\lambda} e_2\\
 S(e_1)=\frac{1}{\lambda}e_1\\
 S(e_2)=-\frac{1}{\lambda}e_1+\frac{2}{\lambda}e_2\\
 (\lambda\neq 0)
 \end{array}
 \right.\hspace{-2mm},&\\
 &(i)\left\{
 \begin{array}{l}
 \rr(e_1)=0\\
 \rr(e_2)=-\frac{1}{\lambda}e_1+\frac{1}{\lambda} e_2\\
 S(e_1)=0\\
 S(e_2)=0\\
 (\lambda\neq 0)\\
 \end{array}
 \right.\hspace{-2mm},~~~(j)\left\{
 \begin{array}{l}
 \rr(e_1)=0\\
 \rr(e_2)=-\frac{1}{\lambda}e_1+\frac{1}{\lambda} e_2\\
 S(e_1)=\frac{1}{\lambda}e_1\\
 S(e_2)=\frac{1}{\lambda}e_1\\
 (\lambda\neq 0)\\
 \end{array}
 \right.\hspace{-2mm}.&
 \end{eqnarray*}

 For Case \ref{it:(II)},  if $\d_r\neq 0$, then all the triangular Reynolds Leibniz bialgebras $\big((\gg, \rr), \d_r, S\big)$ are given as follows:
 \begin{eqnarray*}
 && (a)\left\{
 \begin{array}{l}
 S(e_1)=0\\
 S(e_2)=l_1 e_1-l_1e_2\\
 \rr(e_1)=0\\
 \rr(e_2)=l_1 e_1-l_1e_2\\
 \end{array}
 \right.\hspace{-2mm}.
 \end{eqnarray*} 
\end{enumerate}

 \end{thm}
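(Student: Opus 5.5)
The proof is a direct computation carried out separately for each of the three Leibniz algebras (and each $r$) in the statement. I will use the characterization of triangular Reynolds Leibniz bialgebras from Corollary \ref{cor:124}. There $\delta_r$ is already a Leibniz bialgebra by Corollary \ref{cor:98} and \cite[Theorem 4.1]{LMW}, so only the following must hold:
\begin{enumerate}[(i)]
\item $\rr$ is a Reynolds operator of weight $\lambda$ on $\gg$, i.e.\ Eq.(\ref{eq:2});
\item $S$ is adjoint admissible to $(\gg,\rr)$, i.e.\ Eqs.(\ref{eq:24})--(\ref{eq:25});
\item $r$ satisfies the admissibility condition Eq.(\ref{eq:118}), $(S\o\id-\id\o\rr)(r)=0$. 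Since $r$ is symmetric, Eq.(\ref{eq:119}) is then automatic by Lemma \ref{lem:112} and Remark \ref{rmk:123}.
\end{enumerate}
In each case I write $\rr(e_1)=k_1e_1+k_2e_2$, $\rr(e_2)=l_1e_1+l_2e_2$, $S(e_1)=m_1e_1+m_2e_2$, $S(e_2)=n_1e_1+n_2e_2$.

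First I classify the Reynolds operators. Evaluating Eq.(\ref{eq:2}) on the four pairs $(e_i,e_j)$ and using the multiplication table gives polynomial equations in $k_i,l_i$. For case (1), where the only nonzero bracket is $[e_2,e_2]=e_1$, the pair $(e_1,e_1)$ forces $k_2^2$-type terms, which yields $k_2=0$. The pair $(e_2,e_2)$ then gives $l_2^2(1+\lambda k_1)=2k_1l_2$. This splits into $l_2=0$ (R1), or $l_2=2k_1/(1+\lambda k_1)$ with $1+\lambda k_1\neq 0$ (R2, specializing to R3 at $\lambda=0$). Case (2) is handled the same way using $[e_2,e_1]=[e_2,e_2]=e_1$. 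I will solve by splitting on whether $\rr(e_2)$ has an $e_2$-component, which produces $l_2\in\{0,-l_1,1/\lambda\}$.

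Next I impose Eq.(\ref{eq:118}) on the given $r$. With $r=\eta e_1\o e_1+\gamma(e_1\o e_2+e_2\o e_1)$, comparing coefficients of $e_i\o e_j$ gives linear relations between the entries of $S$ and $\rr$. Because $\gamma\neq 0$ (the hypothesis $\delta_r\neq0$), these can be solved for $S$. For example, the coefficient of $e_2\o e_2$ gives $\gamma m_2=\gamma k_2$, and the $e_1\o e_1$ coefficient gives $\eta m_1+\gamma n_1=\eta k_1+\gamma l_1$. This is exactly where the term $l_1\pm k_1\eta/\gamma$ in the displayed solutions comes from. For case (2)(II), with $r=\eta(e_1-e_2)\o(e_1-e_2)$ and $\eta\neq 0$, the condition says $S^*$ and $\rr$ agree along $e_1-e_2$. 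I then substitute the resulting $S$ into Eqs.(\ref{eq:24})--(\ref{eq:25}) on all basis pairs, which produces the remaining polynomial constraints.

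The main obstacle is step (ii) combined with the case bookkeeping. The adjoint-admissibility equations are quadratic in the mixed entries of $\rr$ and $S$ and involve $\lambda$, so I must branch carefully on $\lambda=0$ versus $\lambda\neq0$, on $1+\lambda k_1=0$, and on the vanishing of various $k_1,l_2,n_2$. This branching is what generates the list (a)--(j) in Case (2)(I). I will organise it by first fixing the Reynolds family (R1)--(R3), then reading off $S$ from step (iii) as far as possible, and finally solving the leftover equations from (\ref{eq:24})--(\ref{eq:25}), checking at the end that every listed solution actually satisfies all conditions.
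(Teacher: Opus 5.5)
Your reduction is the right one: by Corollary~\ref{cor:124} you need Eq.~\eqref{eq:2}, Eqs.~\eqref{eq:24}--\eqref{eq:25} and Eq.~\eqref{eq:118}, and Eq.~\eqref{eq:119} is automatic because $r$ is symmetric. The paper gives no proof of this theorem, so this direct computation is the only available route. Your classifications of the Reynolds operators are also correct.

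The genuine gap is the sentence ``Because $\gamma\neq 0$ (the hypothesis $\delta_r\neq0$), these can be solved for $S$.''
\begin{itemize}
\item In Case (1) this is true, since $\delta_r(e_2)=\gamma e_1\otimes e_1$.
\item In Case (2)(I) it is false. There $\delta_r(e_2)=(\eta+\gamma)e_1\otimes e_1+\gamma e_1\otimes e_2$, so $\delta_r\neq0$ only means $(\eta,\gamma)\neq(0,0)$.
\end{itemize}
In the branch $\gamma=0\neq\eta$ you have $r=\eta e_1\otimes e_1$, and Eq.~\eqref{eq:118} collapses to $S(e_1)\otimes e_1=e_1\otimes\rr(e_1)$. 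This gives only $S(e_1)=\rr(e_1)=k_1e_1$. The vector $S(e_2)=n_1e_1+n_2e_2$ is left completely free and is constrained by Eqs.~\eqref{eq:24}--\eqref{eq:25} alone.

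That branch is exactly where the families with free $S(e_2)$ in the Case (2)(I) list come from. In (a), $\rr=0$ and $S(e_2)$ is arbitrary. For $\gamma\neq0$, however, your four relations
$\gamma m_2=\gamma k_2$, $\gamma m_1=\eta k_2+\gamma l_2$, $\eta m_2+\gamma n_2=\gamma k_1$, $\eta m_1+\gamma n_1=\eta k_1+\gamma l_1$
force $S=0$ as soon as $\rr=0$. Similarly, (b), (d), (g), (i) are incompatible with $\gamma\neq0$ for generic values of their parameters. So Case (2)(I) must be split into $\gamma\neq0$, where $S$ is determined by $\rr$, and $\gamma=0$, where it is not.

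Relatedly, in Case (2)(II), Eq.~\eqref{eq:118} only says that $w=e_1-e_2$ satisfies $S(w)=\rr(w)=cw$ for a common scalar $c$. It leaves $S(e_1)$, including its $e_2$-component, undetermined, so there too Eqs.~\eqref{eq:24}--\eqref{eq:25} must do the real work.

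Be aware also that carrying out your plan faithfully will not reproduce the displayed lists. Your closing step (checking every listed solution) and the completeness claim cannot both succeed, and the paper offers nothing that reconciles this.
\begin{itemize}
\item For $\lambda\neq0$, $\rr=S=\frac{1}{\lambda}\id$ satisfies Eqs.~\eqref{eq:2}, \eqref{eq:24}, \eqref{eq:25} and \eqref{eq:118} for every $r$. It is therefore a triangular Reynolds Leibniz bialgebra in all three cases, yet it is missing from the lists in Case (1) and Case (2)(II).
\item More generally, in Case (1), Eq.~\eqref{eq:118} fixes $S(e_1)=l_2e_1$ and $S(e_2)=(l_1+\frac{\eta}{\gamma}(k_1-l_2))e_1+k_1e_2$. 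Eqs.~\eqref{eq:24}--\eqref{eq:25} then reduce to $l_2^2(1+\lambda k_1)=2k_1l_2$, which is just the Reynolds condition. So the whole (R2) family survives, not only (R1) and (R3).
\item Conversely, item (f) of Case (2)(I) violates Eq.~\eqref{eq:25} at $x=y=e_2$ unless $l_1n_2=0$. The term $[S(e_2),\rr(e_2)]=l_1n_2e_1$ is not cancelled, since every other term is a multiple of $S(e_1)=0$.
\item Items (c), (h), (j) satisfy Eq.~\eqref{eq:118} only when $\eta=-2\gamma$, a condition the statement does not record.
\end{itemize}
What your method can actually establish is the classification it outputs, with the constraints on $(\eta,\gamma)$ made explicit.
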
 

\bigskip

\section*{Acknowledgments} 
This work is supported by National Natural Science Foundation of China (No. 12471033).
\smallskip
\smallskip

\noindent
{\bf Declaration of interests.} The authors have no conflicts of interest to disclose.

\noindent
{\bf Data availability.} Data sharing is not applicable as no new data were created or analyzed.

\end{document}